\newcommand{\doi}[1]{\url{http://dx.doi.org/#1}}
\newtheorem{theorem}{Theorem}[section]
\newtheorem{definition}{Definition}[section]
\newtheorem{remark}{Remark}[section]
\newtheorem{lemma}{Lemma}[section]
\newtheorem{corollary}{Corollary}[section]
\newtheorem{assumptions}{Assumption}[section]
\newenvironment{proof}{\begin{trivlist}
\item[\hskip\labelsep{\it Proof.}]}{$\hfill\Box$ \end{trivlist}}
\DeclareMathOperator{\decay}{decay}
\DeclareMathOperator{\cost}{cost}
\DeclareMathOperator{\CD}{CD}
\DeclareMathOperator{\bias}{bias}
\def\bsa{\boldsymbol{a}}
\def\bsx{\boldsymbol{x}}
\def\bsy{\boldsymbol{y}}
\def\bsk{\boldsymbol{k}}
\def\bsl{\boldsymbol{l}}
\def\bs0{\boldsymbol{0}}
\def\bsq{\boldsymbol{q}}
\def\bsell{\boldsymbol{\ell}}
\def\bsgamma{\boldsymbol{\gamma}}
\def\bstau{\boldsymbol{\tau}}
\def\bsqs12{\boldsymbol{q}_{[s_2] \setminus [s_1]}}
\def\Gs12{G^{s_2-s_1}_{b,m}}
\newcommand{\X}{{\mathfrak X}}
\newcommand{\U}{{\mathcal U}}
\newcommand{\N}{\mathbb{N}}
\newcommand{\R}{\mathbb{R}}
\newcommand{\EE}{\mathbb{E}}
\newcommand{\PP}{\mathbb{P}}
\newcommand{\FF}{\mathbb{F}}
\def\nn{\mathbb{N}}
\newcommand{\nat}{\mathbb{N}}
\newcommand\Var{\textnormal{Var}}
\newcommand{\rr}{\mathbb{R}}
\newcommand{\wal}{{\rm wal}}
\newcommand{\rd}{\, \mathrm{d}}
\newcommand{\bszero}{\boldsymbol{0}}
\title{Optimal randomized changing dimension algorithms for infinite-dimensional integration on function spaces with ANOVA-type decomposition}
\author{Josef Dick\thanks{School of Mathematics and Statistics,
University of New South Wales, Sydney ({\tt josef.dick@unsw.edu.au}).}
\and Michael Gnewuch\thanks{School of Mathematics and Statistics,
University of New South Wales, Sydney,
({\tt m.gnewuch@unsw.edu.au}).}}
\date{}
\begin{document}
\maketitle

\vspace{-15pt}

\emph{\begin{center}Dedicated to Erich Novak on the occasion of his  60th
birthday\end{center}}

\vspace{5pt}

\begin{abstract}
We study the numerical integration problem for functions with infinitely many variables. The function spaces of integrands we consider are weighted reproducing kernel Hilbert spaces with norms related to the ANOVA decomposition of the integrands. The weights model the relative importance of different groups of variables.
We investigate randomized quadrature algorithms and measure their quality by estimating the randomized worst-case integration error.

In particular, we provide lower error bounds for a very general class of randomized algorithms that includes
non-linear and adaptive algorithms. Furthermore, we propose new randomized changing dimension algorithms and present favorable upper error bounds.
For product weights and finite-intersection weights our lower and upper error bounds match and show that our changing dimension algorithms
are optimal in the sense that they achieve convergence rates arbitrarily close to the best possible convergence rate.
As more specific examples, we discuss unanchored Sobolev spaces of different degrees of smoothness and randomized changing dimension algorithms
that use as building blocks scrambled polynomial lattice rules.

Our analysis extends the analysis given in [J. Baldeaux, M. Gnewuch. Optimal randomized multilevel algorithms for infinite-dimensional
integration on function spaces with ANOVA-type decomposition. arXiv:1209.0882v1 [math.NA], Preprint 2012].
In contrast to the previous article we now investigate a different cost model for algorithms.
With respect to that cost model, randomized multilevel algorithms cannot, in general, achieve optimal convergence rates, but, as we prove for
important classes of weights, changing dimension algorithms actually can.

\end{abstract}

{\em Key words and phrases:ANOVA Decomposition; Randomized Algorithms; Numerical Integration; Changing Dimension Algorithm;
Dimension-Wise Quadrature Method; Quasi-Monte Carlo Method}
 \vspace*{2cm}

\section{Introduction} \label{introduction}
Integrals over functions with an a priori unlimited or even infinite
number of variables appear in applications such as molecular
chemistry, physics or quantitative finance, see, e.g.,
\cite{Gil08a, WW96} and the literature mentioned therein.

Recently a large amount of research has been done on how to solve such
kind of integrals efficiently with the help of cleverly designed algorithms, such as
\emph{multilevel algorithms}, \emph{changing dimension algorithms}, and
\emph{dimension-wise quadrature methods}.
Multilevel Monte Carlo algorithms were introduced by Heinrich \cite{Hei98}
for the computation of solutions of integral equations and by Giles
\cite{Gil08a} for path simulation of stochastic differential equations.
Changing dimension algorithms for infinite-dimensional integration were introduced
by Kuo et al. \cite{KSWW10a}, and dimension-wise quadrature methods for
multivariate integration were introduced by Griebel and Holtz in \cite{GH10}.
Changing dimension algorithms and dimension-wise quadrature methods try to address
the important components of the anchored decomposition of the integrand.

There is a large number of complexity theoretical articles that study the tractability
of infinite-dimensional problems, see, e.g. \cite{WW11a, WW11b, Was12} for function approximation and
\cite{HW01, NH09, HMNR10, KSWW10a, NHMR11, PW11,
Bal12, Gne10, BG12, DG12, Gne12a} for integration.
These results rely strongly on function space
        decompositions of weighted reproducing kernel Hilbert spaces
        like anchored or ANOVA decompositions
        (see, e.g., \cite{KSWW10b})
        and on randomized and deterministic low-discrepancy point sets,
        lattice rules
        or sparse grid constructions.

The ANOVA decomposition is of particular interest for the following reason: Looking at the classical theory of quasi-Monte Carlo (QMC) integration, many researchers expected that QMC integration based on low-discrepancy sequences is not very helpful in higher dimension. But then numerical experiments clearly showed that QMC methods are superior to Monte Carlo (MC) methods for many financial applications in dimensions
as high as $360$ or even higher, see, e.g., \cite{PT95, NT96, PT96, CMO97, ABG98}. One approach to explain these unexpected results is that although the underlying problems are high-dimensional, their \emph{effective dimension} is indeed very small, see \cite{CMO97}. The definition of effective (truncation or
superposition) dimension is based on the ANOVA decomposition of the
integrand. Essentially, a multivariate integrand has low effective
dimension if its variance is sharply concentrated in its lower-order ANOVA terms.
Nevertheless, this argument alone is not completely satisfactory,
since the integrands appearing in finance applications have
usually not the necessary smoothness required for the theoretical
results of the theory of QMC methods.
But Liu and Owen \cite{LO06},
and Griebel, Kuo, and Sloan \cite{GKS10, GKS11}
showed that the ANOVA decomposition has a favorable smoothing effect:
the lower-order ANOVA terms of a function
exhibit more smoothness than the function itself.

Thus it would be highly desirable, especially for finance applications,
to have some  algorithm that addresses the (more
important) lower order ANOVA terms of an integrand by quadratures
that exploit the smoothness of these terms and lead to higher order
convergence, and the (less important) higher order ANOVA terms
by quadratures that take (efficiently) care of the less smoothness terms.
Calculating the ANOVA decomposition of a given integrand is too
expensive, since this requires in particular the exact calculation
of the integral one wants to approximate.
Thus the canonical choice to address the ANOVA terms is to (essentially) utilize
an anchored decomposition.
This was done in \cite{GH10} as well as in \cite{HMNR10, BG12}.

The article \cite{GH10} provides adaptive and non-adaptive algorithms for
high-dimen\-sional integration which perform well in finance applications.
The non-adaptive algorithms are similar to the
changing dimension algorithms from \cite{KSWW10a, PW11}.
Interesting concepts proposed in \cite{GH10} are, e.g., the truncation and superposition
dimension in the anchored case.
Unfortunately, the error analysis in \cite{GH10} does not show how well
the non-adaptive algorithms address the important ANOVA components of integrands, since the provided
error estimates are for
norms based on anchored decompositions and not on ANOVA decompositions.
Furthermore, the assumptions in \cite{GH10} on the computational costs are rather optimistic,
since it is assumed that the cost for function evaluations does not depend
on the number of variables. Here it is more realistic to assume that the cost
depends (at least) linearly on the number of variables, as it is done in the more specific
context of path simulation, see, e.g., \cite{Gil08a}.

In \cite{HMNR10, BG12} the convergence rates of randomized multilevel algorithms for
infinite-di\-men\-sio\-nal integration are analyzed for norms based on the ANOVA decomposition.
The cost model considered in these two articles take into account that function evaluations
are more expensive if more variables are involved  (i.e., more variables are ``active'').
It was shown in \cite{BG12} that
suitable randomized multilevel algorithms achieve the optimal rate of convergence in
this cost model.

In this article we extend the analysis from \cite{HMNR10, BG12}. For our lower bounds we study rather
general randomized integration algorithms, for our upper error bounds we focus on
\emph{randomized changing dimension algorithms}.
As building blocks for changing dimension algorithms we allow general unbiased algorithms
for multivariate integration.
The ``typical algorithms'' we have in mind are (suitably) randomized quasi-Monte Carlo
(RQMC) algorithms, as discussed in Section \ref{SPLR}.
In contrast to \cite{HMNR10, BG12} we investigate in this paper a different, more generous cost model for
algorithms which was introduced in \cite{KSWW10a}.

The paper is organized as follows: In Section \ref{Preliminaries} we recall the
ANOVA decomposition of square integrable functions and introduce the weights,
function spaces, cost and error criteria we want to study.
Additionally, we provide in Section \ref{PROJECTIONS}
new lemmas which are important for our error analysis of randomized algorithms,
particularly for our randomized changing dimension algorithms.
In Section \ref{LOWBOU} we provide a lower bound for the error of infinite-dimensional
integration
of general randomized algorithms and general weights, see Theorem \ref{Theorem3.1''}.
As shown in Section \ref{CDA}, this bound is sharp for finite-intersection and
product weights. More precisely, we present in Section \ref{CDA} randomized
changing dimension algorithms for general
weights and prove an upper bound for their randomized worst-case integration error, see
Theorem \ref{Theo_UB_General}. In Theorem \ref{Theo_UB_FIW} and \ref{Theo_UB_PW} we
provide sharp upper error bounds for finite-intersection weights and
product weights, respectively.
For the analysis of our new changing dimension algorithms we adapt the approach
of Plaskota and Wasilkowski from \cite{PW11}.
In Section \ref{SPLR} we consider concrete spaces of functions of
infinitely many variables. The function spaces we consider are unanchored Sobolev space of smoothness $\chi \ge 1$. By showing that the results of \cite{GD12} apply to these spaces, we obtain that \emph{interlaced scrambled polynomial lattice rules} achieve the optimal rate of convergence of the random case error in these spaces. Based on these results we show that changing dimension algorithms based on interlaced scrambled polynomial lattice rules are essentially optimal in the case of finite-intersection weights and product weights.

\section{Preliminaries} \label{Preliminaries}

Let us make a few remarks on our notation: For $n\in\N$ we denote by $[n]$ the
set $\{1,2,\ldots,n\}$. For a finite set $u$ we denote its cardinality by
$|u|$. We put $\U := \{ u\subset \N \,|\, |u| < \infty\}$. For a subset $\mathcal{W}$ of $\U$ we put
\begin{equation*}
\overline{\mathcal{W}} := \{w\in\U \,|\, \exists w' \in \mathcal{W} : w \subseteq w'\},
\end{equation*}
i.e., $\overline{\mathcal{W}}$ is the closure
of $\mathcal{W}$ with respect to taking subsets. We use the common Landau $o$- and $O$-notation. For functions $f$ and $g$
we write $f=\Omega(g)$ for $g= O(f)$, and $f=\Theta(g)$ if $f=\Omega(g)$ and $f=O(g)$ holds.
For a reproducing kernel $K$ we denote the
corresponding reproducing kernel Hilbert space by $H(K)$ and its norm unit
ball by $B(K)$. The norm and scalar product of $H(K)$ are denoted by
$\|\cdot\|_K$ and $\langle \cdot, \cdot \rangle_K$, respectively.
Our reference for reproducing kernel Hilbert spaces
is \cite{Aro50}.

\subsection{The ANOVA decomposition}\label{SECT2.1}

We recall the (crossed) ANOVA\footnote{ANOVA stands
for ``Analysis of Variance''.} decomposition of $L^2$-functions:
Let {$(D, \Sigma, \rho)$ be a probability space, and denote its $d$-fold product space by $(D^d, \Sigma^d, \rho^d)$.
Let $f:D^d \to \R$ be an $L^2$-function. For $u\subseteq [d]$ and $\bsx\in D^d$ let
$\bsx_u := (x_j)_{j\in u} \in D^u$. For $\bsx_u\in D^u$ and $\bsa \in D^{[d]\setminus u}$ let $(\bsx_u, \bsa)\in D^d$
be the vector
whose $j$th component is $x_j$ if $j\in u$ and $a_j$ otherwise. The $u$th ANOVA-term $f_u$ of $f$ can be
computed recursively via
\begin{displaymath}
f_u(\bsx) = \int_{D^{[d] \setminus u}} f(\bsx_u, \bsa) \,\rho^{[d]\setminus u} ({\rm d} \bsa)
- \sum_{v \subsetneq u} f_v(\bsx) \, , \textrm{ where } f_{\emptyset} = \int_{D^d} f(\bsa) \,
\rho^d({\rm d} \bsa).
\end{displaymath}
The ANOVA decomposition of $f$ is given by
\begin{equation} \label{eqANOVAdecomp}
f(\bsx) = \sum_{u \subseteq [d]} f_u(\bsx) \, .
\end{equation}
The important feature of the ANOVA decomposition is
\begin{equation}
 \label{anova}
\Var(f) = \sum_{u\subseteq [d]} \Var(f_u).
\end{equation}

Let $(\Omega,\Sigma',\PP)$  be another probability space. Let us consider random quadratures
that use $n$ (deterministic) real coefficients $t_i$ and $n$ randomly chosen quadrature points
$\bsx^{(1)}(\omega),\ldots, \bsx^{(n)}(\omega)$ in $D^d$, i.e., that have the form
\begin{equation}
\label{alg-form}
Q_n(\omega, f) = \sum^n_{i=1} t_i f(\bsx^{(i)}(\omega)),
\hspace{3ex}\omega\in\Omega^d,\, t_1,\ldots, t_n \in\R, \,f\in L^2(D^d, \rho^d).
\end{equation}
We assume that for every fixed $f\in L^2(D^d, \rho^d)$  the function $\omega \mapsto Q_n(\omega,f)$ is square integrable over $\Omega^d$.

For the convenience of the reader we provide a (less general) version of Lemma~2.1 from \cite{BG12},
which says that under a certain condition the $u$th ANOVA-term
of the $L^2(\Omega^d,\PP^d)$-function  $Q_n(\cdot,f)$ is equal to $Q_n$
applied to the $u$th ANOVA-term of the $L^2(D^d, \rho^d)$-function $f$.
We denote the ANOVA-terms of $Q_n(\cdot,f)$, regarded as a function on $\Omega^d$, by $\left[ Q_n(\cdot, f) \right]_u$, $u\subseteq [d]$.

\begin{lemma}[ANOVA Invariance Lemma]
\label{ANOVA}
Let $(D, \Sigma, \rho)$, $(\Omega,\Sigma', \PP)$ be probability spa\-ces.
Let $d\in\N$.  Assume that $Q_n = Q_{[d],n}$, given by (\ref{alg-form}),
is a randomized linear algorithm which
satisfies the following condition:
\begin{itemize}
\item[(*)] The random points $\bsx^{(i)} = (x^{(i)}_j(\omega))_{j=1}^d \in D^d$,
$i=1,\ldots,n$,  satisfy $x^{(i)}_j(\omega) = x^{(i)}_j(\omega_j)$ for all $j\in [d]$,
and the random variables $x^{(i)}_j$ are distributed according to the law $\rho$.
\end{itemize}
Then we have for each $f\in L^2(D^d, \rho^d)$
\begin{equation}
\label{vertauschungsrelation}
[Q_n(\cdot, f)]_u = Q_n(\cdot, f_u)
\hspace{3ex}\text{for all $u\subseteq [d]$.}
\end{equation}
\end{lemma}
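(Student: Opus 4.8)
The plan is to prove the identity \eqref{vertauschungsrelation} by induction on the cardinality $|u|$ of the subset $u\subseteq [d]$, exploiting the recursive definition of the ANOVA terms together with the key structural property that condition $(*)$ gives us: namely, that the $j$th coordinate $x^{(i)}_j$ of each quadrature point depends only on the $j$th coordinate $\omega_j$ of the randomness, and is $\rho$-distributed. The central observation driving the whole argument is that \emph{the randomized algorithm $Q_n$ commutes with the partial integration (conditional expectation) operators used to define the ANOVA terms}. More precisely, for a subset $u\subseteq[d]$, integrating the random function $\omega\mapsto Q_n(\omega,f)$ over the coordinates $\omega_j$ with $j\in[d]\setminus u$ should produce exactly $Q_n(\cdot,\cdot)$ applied to the function obtained by integrating $f$ over its coordinates in $[d]\setminus u$.

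First I would establish this commutation of integration with $Q_n$ as the workhorse lemma. Fix $v\subseteq[d]$ and consider $\int_{\Omega^{[d]\setminus v}} Q_n(\omega,f)\,\PP^{[d]\setminus v}(\rd\omega)$. Because $Q_n(\omega,f)=\sum_{i=1}^n t_i f\big(x^{(i)}_1(\omega_1),\ldots,x^{(i)}_d(\omega_d)\big)$ by $(*)$, and because the coordinate maps $\omega_j\mapsto x^{(i)}_j(\omega_j)$ are $\rho$-distributed and depend on disjoint blocks of the randomness, I can interchange the finite sum with the integral and then, for each fixed $i$, carry out the $\omega_j$-integration for $j\in[d]\setminus v$. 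Using the change-of-variables/pushforward induced by $x^{(i)}_j\sim\rho$ and Fubini (justified by the $L^2$-integrability assumption on $\omega\mapsto Q_n(\omega,f)$), each such integral reproduces precisely the integral of $f$ against $\rho$ in the $j$th variable. This yields
\begin{equation*}
\int_{\Omega^{[d]\setminus v}} Q_n(\omega,f)\,\PP^{[d]\setminus v}(\rd\omega)
= Q_n\!\left(\cdot,\, \int_{D^{[d]\setminus v}} f(\,\cdot\,,\bsa)\,\rho^{[d]\setminus v}(\rd\bsa)\right),
\end{equation*}
where on the right $Q_n$ acts on the function of the remaining variables $\bsx_v$, and on the left the result is a function of $\omega_v$ only.

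With this commutation relation in hand, I would run the induction. The base case $u=\emptyset$ is the statement $[Q_n(\cdot,f)]_\emptyset = Q_n(\cdot,f_\emptyset)$, which is just the commutation identity for $v=\emptyset$ together with the fact that both sides equal the global average (noting $f_\emptyset$ is the constant $\int_{D^d} f\,\rd\rho^d$ and $Q_n$ applied to a constant $c$ returns $c\sum_i t_i$, matching the mean of $Q_n(\cdot,f)$). For the inductive step, I apply the recursive ANOVA formula to the function $Q_n(\cdot,f)$ on $\Omega^d$:
\begin{equation*}
[Q_n(\cdot,f)]_u = \int_{\Omega^{[d]\setminus u}} Q_n(\omega,f)\,\PP^{[d]\setminus u}(\rd\omega) - \sum_{v\subsetneq u}[Q_n(\cdot,f)]_v.
\end{equation*}
By the commutation relation the first term equals $Q_n(\cdot,\int_{D^{[d]\setminus u}} f(\cdot,\bsa)\,\rho^{[d]\setminus u}(\rd\bsa))$, and by the induction hypothesis $[Q_n(\cdot,f)]_v = Q_n(\cdot,f_v)$ for every $v\subsetneq u$. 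Since $Q_n$ is linear in its function argument, pulling the sum inside gives $Q_n\big(\cdot,\int_{D^{[d]\setminus u}} f(\cdot,\bsa)\,\rho^{[d]\setminus u}(\rd\bsa) - \sum_{v\subsetneq u} f_v\big)$, and the argument of $Q_n$ is exactly $f_u$ by the recursive definition of the ANOVA terms. This closes the induction.

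The main obstacle I anticipate is the rigorous justification of the commutation relation, specifically the interchange of integration with the (deterministic but possibly signed) quadrature sum and the correct handling of the pushforward under the coordinate randomization. The delicate point is that $(*)$ guarantees both the coordinatewise dependence $x^{(i)}_j(\omega)=x^{(i)}_j(\omega_j)$ \emph{and} the distributional identity $x^{(i)}_j\sim\rho$; both are essential, since the former lets Fubini factor the $[d]\setminus u$ integration coordinate by coordinate while the latter ensures each factor reproduces a genuine $\rho$-integral rather than some other measure. I would take care to invoke the square-integrability hypothesis on $\omega\mapsto Q_n(\omega,f)$ to legitimize all the Fubini interchanges, and to note that the linearity of $Q_n$ in $f$ (used in the inductive step) is exactly the hypothesis that $Q_n$ is a \emph{randomized linear algorithm} of the form \eqref{alg-form}.
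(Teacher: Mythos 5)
Your proof is correct. The paper itself does not prove this lemma --- it is imported from \cite[Lemma~2.1]{BG12} --- but the argument there rests on exactly the idea you isolate, namely that condition (*) makes $Q_n$ commute with partial integration over blocks of coordinates, so your induction on $|u|$ via the recursive ANOVA definition is essentially the same proof with slightly different bookkeeping (one can alternatively expand $f_u$ by M\"obius inversion and apply the commutation relation to each term, avoiding the induction). One small refinement: the cleanest justification of the Fubini/interchange steps is termwise rather than via the standing square-integrability assumption --- for each $i$, condition (*) implies that $\omega\mapsto(x^{(i)}_j(\omega_j))_{j\in[d]}$ pushes $\PP^d$ forward to $\rho^d$, so each summand $f(\bsx^{(i)}(\cdot))$ lies in $L^2(\Omega^d,\PP^d)$ with norm $\|f\|_{L^2(D^d,\rho^d)}$; this licenses all the interchanges and in fact shows that the square-integrability of $Q_n(\cdot,f)$ is a consequence of (*) rather than an extra hypothesis, as the paper itself remarks after the lemma.
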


Note that for $f\in L^2(D^d,\rho^d)$ condition (*) of Lemma \ref{ANOVA} implies that
$Q_n(\cdot,f)$ is square integrable on $\Omega^d$ and, if additionally
\begin{equation}\label{summe=1}
\sum_{i=1}^n t_i = 1
\end{equation}
holds, an unbiased estimator of
$\int_{D^d}f(\bsx) \,\rho^d({\rm d}\bsx)$.

Let us point out that Monte Carlo (MC) and many randomized quasi-Monte Carlo (RQMC) algorithms
satisfy condition (*) of Lemma \ref{ANOVA}, see, e.g., \cite{DKS13, DP10, LL02}. We will
demonstrate this for scrambled polynomial lattice rules in Section~\ref{SPLR}.

\subsection{Classes of weights}

Let
\begin{equation*}
\U := \{u\subset \nn \,|\, |u|<\infty\},
\end{equation*}
and let $\bsgamma=(\gamma_u)_{u\in \U}$ be a sequence of non-negative weights.
Let us briefly introduce the classes of weights we are interested in.

Weights $\bsgamma$ are called \emph{finite-order weights of order $\beta$} if  there exists a $\beta \in \N$ such that $\gamma_{u}=0$ for all $u\in \U$ with $|u|> \beta$. Finite-order weights were introduced in \cite{DSWW06} for spaces of functions with a finite number of variables.


\emph{Product and order-dependent (POD) weights} $\bsgamma$ were introduced in \cite{KSS12}.
Their general form is
\begin{equation}
\label{pod}
\gamma_u = \Gamma_{|u|} \prod_{j\in u}\gamma_j,
\hspace{3ex}\text{where $\gamma_1\ge\gamma_2\ge \cdots \ge 0$, and
$\Gamma_0=\Gamma_1=1$, $\Gamma_2,\Gamma_3,\ldots \ge 0$.}
\end{equation}
Special cases are product and finite-product weights that are defined as
follows.

\begin{definition}
Let $(\gamma_j)_{j\in\N}$ be a sequence of non-negative real
numbers satisfying $\gamma_1\ge \gamma_2 \ge \ldots.$ With the
help of this sequence we define for $\beta \in\N\cup\{\infty\}$  weights
$\bsgamma = (\gamma_{u})_{u\subset_f\N}$ by
\begin{equation}
\label{gammafpw}
\gamma_{u} =
\begin{cases}
\prod_{j\in u} \gamma_j
\hspace{2ex}&\text{if $|u| \le \beta$},\\
\,0
\hspace{2ex} &\text{otherwise,}
\end{cases}
\end{equation}
where we use the convention that the empty product is $1$.
In the case where $\beta = \infty$, we call such weights \emph{product weights},
in the case where $\beta$ is finite, we call them \emph{finite-product weights of order}
(at most) $\beta$.
\end{definition}

Product weights were introduced by  Sloan and Wo\'zniakowski in \cite{SW98}, finite-product weights were considered in \cite{Gne10}. We are particularly interested in some subclass of finite-order weights. We restate Definition 3.5 from \cite{Gne10}.

\begin{definition}
\label{def-fiw}
Let $\varrho \in \nn$.
Finite-order weights $(\gamma_{u})_{u\in \U}$ are
called \emph{finite-intersection weights} with \emph{intersection
degree} at most $\varrho$ if we have
\begin{equation}
\label{fiw}
|\{v\in\U \, | \, \gamma_v >0 \,,\, u\cap v \neq \emptyset \}| \le 1+ \varrho
\hspace{2ex}\text{for all $u\in\U$ with $\gamma_u >0$.}
\end{equation}
\end{definition}
Note that for finite-order weights of order $\beta$, condition (\ref{fiw}) is
equivalent to the following
condition: There exists an $\eta\in\nn$ such that
\begin{equation}\label{cond}
|\{ u\in\U \,|\, \gamma_u >0 \,,\, k\in u \}| \le \eta
\hspace{2ex}\text{for all $k\in\nn$.}
\end{equation}
A subclass of the finite-intersection weights are the
\emph{finite-diameter weights} proposed by Creutzig, see, e.g., \cite{Gne10,NW08}.

\subsection{Function Spaces} \label{subsecfunspace}

Let $D\subseteq \rr$, $\rho$ a probability measure on $D$, and
$\mu:=\otimes_{n\in\nn}\, \rho$ the product probability measure on
$D^\nn$.
Let $(\gamma_u)_{u\in \U}$ be a family of
non-negative weights.


\begin{assumptions}
 We assume that
\begin{itemize}
\item[{\rm (A 1)}] $k\neq 0$ is a measurable reproducing kernel on $D\times D$
which satisfies
\item[{\rm (A 2)}] $H(k)\cap H(1) = \{0\}$ as well as
\item[{\rm (A 3)}] $M:= \int_D k(x,x) \rho({\rm d}x) < \infty$.
\item[{\rm (A 4)}] $\gamma_\emptyset = 1$ and
\begin{equation}
\label{summable}
\sum_{u\in \U} \gamma_u M^{|u|} <\infty.
\end{equation}
\end{itemize}
\end{assumptions}
Notice that for product weights and finite-order weights condition (\ref{summable}) can be replaced by the equivalent condition
$\sum_{u\in \U} \gamma_u < \infty$.

For $u\in \U$ we put $k_u(\bsx,\bsy) := \prod_{j\in u} k(x_j,y_j)$, for all $\bsx, \bsy\in D^\nn$.
In particular, $k_\emptyset(\bsx,\bsy) = 1$.
We define $H_u := H(k_u)$, i.e., $H_u$ is the reproducing kernel
Hilbert space with kernel $k_u$. The following lemma stems from \cite{HMNR10}.
\begin{lemma}
\label{restrict}
Let $\bsx,\bsy\in D^{\N}$ and $f\in H_u$. If $\bsx_u = \bsy_u$,
then $f(\bsx) = f(\bsy)$.
\end{lemma}

Given $v\in\mathcal{U}$ we define the \emph{weighted kernel} $K_v(\bsx,\bsy) := \sum_{u\subseteq v} \gamma_u k_u(\bsx,\bsy)$, for $\bsx,\bsy\in D^{\nn}$.
For the next lemma see \cite[Lemma~3]{HW01} or \cite[I, \S~6]{Aro50}.
\begin{lemma}\label{Lemma5}
The reproducing kernel Hilbert space $H(K_v)$ consists of all
functions $f = \sum_{u\subseteq v} f_u$, for $f_u\in H_u$.
Furthermore, $\|f\|^2_{K_v} = \sum_{u\subseteq v} \gamma^{-1}_u \|f_u\|^2_{k_u}.$
\end{lemma}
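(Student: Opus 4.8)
The plan is to reduce the statement to Aronszajn's theorem on sums of reproducing kernels (\cite{Aro50}, Part~I, \S6) and to supply the one extra ingredient that theorem does not give for free, namely the uniqueness of the decomposition. First I would observe that, since $v$ is finite, $K_v=\sum_{u\subseteq v}\gamma_u k_u$ is a \emph{finite} sum, and that each summand $\gamma_u k_u$ is itself a reproducing kernel: the product $k_u(\bsx,\bsy)=\prod_{j\in u}k(x_j,y_j)$ is a (tensor) product of reproducing kernels and hence reproducing, and a non-negative multiple of a reproducing kernel is again one. For $\gamma_u>0$ we have $H(\gamma_u k_u)=H_u$ as sets, with $\|f\|_{\gamma_u k_u}^2=\gamma_u^{-1}\|f\|_{k_u}^2$; for $\gamma_u=0$ the summand is the zero kernel and contributes only the zero function, so in the norm identity the usual convention $0^{-1}\cdot 0:=0$ is in force and all sums may be restricted to those $u$ with $\gamma_u>0$.

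Aronszajn's sum theorem then yields at once that $H(K_v)$ consists exactly of the functions $f=\sum_{u\subseteq v}f_u$ with $f_u\in H_u$, together with the variational formula
\begin{equation*}
\|f\|_{K_v}^2=\min\Big\{\sum_{u\subseteq v}\gamma_u^{-1}\|f_u\|_{k_u}^2 \;:\; f=\sum_{u\subseteq v}f_u,\ f_u\in H_u\Big\}.
\end{equation*}
This already establishes the description of $H(K_v)$ as a function space. To turn the minimum into the asserted closed form $\|f\|_{K_v}^2=\sum_{u\subseteq v}\gamma_u^{-1}\|f_u\|_{k_u}^2$ it remains to prove that the representation $f=\sum_{u\subseteq v}f_u$ is \emph{unique}, equivalently that the family $(H_u)_{u\subseteq v}$ is linearly independent, i.e.\ $\sum_{u\subseteq v}f_u=0$ with $f_u\in H_u$ forces $f_u=0$ for every $u$. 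Granting this, the feasible set of the minimisation is a single point and the formula drops out.

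The main obstacle is precisely this independence, and this is where Assumption (A2) is used. In one variable, $H(k)\cap H(1)=\{0\}$ makes $H(1+k)=H(1)\oplus H(k)$ an \emph{orthogonal} direct sum in $H(1+k)$ (again by Aronszajn's theorem, whose norm formula $\|f_0+f_1\|^2=\|f_0\|_1^2+\|f_1\|_k^2$ is exactly orthogonality), with $H(1)$ the one-dimensional space of constants. I would then introduce the auxiliary kernel $\widetilde K:=\prod_{j\in v}\bigl(1+k(x_j,y_j)\bigr)=\sum_{u\subseteq v}k_u$, whose RKHS contains every $H_u$, $u\subseteq v$. By the tensor-product structure of reproducing kernels,
\begin{equation*}
H(\widetilde K)=\bigotimes_{j\in v}H(1+k)=\bigotimes_{j\in v}\bigl(H(1)\oplus H(k)\bigr)=\bigoplus_{u\subseteq v}\Bigl(\bigotimes_{j\in u}H(k)\Bigr)\otimes\Bigl(\bigotimes_{j\in v\setminus u}H(1)\Bigr),
\end{equation*}
and since $\bigotimes_{j\in v\setminus u}H(1)$ is just the constants, the $u$th summand is canonically $H(k_u)=H_u$ (consistent with Lemma~\ref{restrict}, functions in $H_u$ depending only on the coordinates in $u$). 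Hence the $H_u$, $u\subseteq v$, are mutually orthogonal subspaces of the single Hilbert space $H(\widetilde K)$, in particular linearly independent, which gives the required uniqueness and completes the argument. The only delicate point in writing this out fully is to justify the tensor-product identity $H(K\otimes K')=H(K)\otimes H(K')$ and the isometric embeddings $H_u\hookrightarrow H(\widetilde K)$ carefully; everything else is bookkeeping.
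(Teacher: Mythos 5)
Your proof is correct and follows essentially the same route the paper takes: the paper gives no inline argument but cites precisely Aronszajn \cite[I, \S~6]{Aro50} (alternatively \cite[Lemma~3]{HW01}), and your write-up is a faithful filling-in of that citation — Aronszajn's sum-of-kernels theorem for the description of $H(K_v)$ and the minimal-norm formula, plus the orthogonal tensor decomposition $H(\widetilde K)=\bigotimes_{j\in v}\bigl(H(1)\oplus H(k)\bigr)$ built on Assumption (A2) to get uniqueness of the decomposition and hence the closed-form norm identity. In particular, you correctly identified that (A2) is exactly the hypothesis that makes the spaces $H_u$, $u\subseteq v$, linearly independent, which is the only nontrivial point beyond the classical theorem.
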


We follow here and elsewhere the convention that $\infty\cdot 0 = 0$. Note that
$\gamma_u = 0$ implies $f_u=0$ for all $f\in H(K_v)$;  in that
case  $\gamma^{-1}_u\|f_u\|^2_{k_u} = 0$.

Due to Lemma \ref{restrict} we can view the spaces $H(k_u)$
and $H(K_u)$ as spaces of functions on $D^u$.
In this case we have
$H(k_u) = \otimes_{j\in u} H(k)$.

Let us define the domain $\X$ of functions of infinitely many variables by
\begin{equation}
 \X := \bigg\{ \bsx\in D^{\N} \,\bigg|\, \sum_{u\in \U} \gamma_u \prod_{j\in u} k(x_j, x_j) <\infty \bigg\}.
\end{equation}
Then $\mu(\X) = 1$, see \cite[Lemma~9]{GMR12}.
We define the reproducing kernel $K=K(\bsgamma)$ by
\begin{equation*}
K(\bsx,\bsy) := \sum_{u\in\mathcal{U}} \gamma_u k_u(\bsx,\bsy)
\hspace{2ex}\text{for $\bsx,\bsy \in\X$.}
\end{equation*}
For the next lemma see
\cite[Cor.~5]{HW01} or
\cite[Prop.~2]{GMR12}.

\begin{lemma}
\label{Lemma6}
The reproducing kernel Hilbert space $H(K)$ consists of all functions
\begin{equation}\label{function-decomp}
f=\sum_{u\in \mathcal{U}} f_u,
\hspace{3ex}f_u \in H_u,
\end{equation}
whose norms $\|f \|_K$, defined by
\begin{equation}\label{norm-formel}
\|f\|^2_K = \sum_{u\in\mathcal{U}} \gamma^{-1}_u \|f_u\|^2_{k_u},
\end{equation}
are finite.
\end{lemma}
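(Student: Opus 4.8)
The statement is the countable-index extension of Lemma~\ref{Lemma5}, so the plan is to prove it by combining the sum-of-reproducing-kernels theorem of Aronszajn \cite{Aro50} with a limiting argument that controls the infinite sum over $\U$ on the domain $\X$. Concretely, I would introduce the candidate space
\[
\mathcal H := \Big\{ f = \sum_{u\in\U} f_u \;\Big|\; f_u\in H_u,\ \sum_{u\in\U}\gamma_u^{-1}\|f_u\|_{k_u}^2 < \infty \Big\},
\]
whose intended inner product is $\langle f,g\rangle_{\mathcal H} = \sum_{u\in\U}\gamma_u^{-1}\langle f_u,g_u\rangle_{k_u}$. Well-definedness of this inner product hinges on the representation $f=\sum_u f_u$ being unique, which I establish from Assumption (A\,2) (deferred to the last paragraph). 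Granting uniqueness, the plan is to show that $\mathcal H$ is a Hilbert space of functions on $\X$ whose reproducing kernel is exactly $K$; since the RKHS of a given kernel is unique, this yields $\mathcal H=H(K)$, and reading off the inner product gives (\ref{norm-formel}).

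The analytic core is that each such series is an honest finite-valued function on $\X$ and that $K(\cdot,\bsy)$ itself lies in $\mathcal H$. For $f_u\in H_u$ the reproducing property in $H_u$ gives $|f_u(\bsx)| \le \|f_u\|_{k_u}\sqrt{k_u(\bsx,\bsx)}$, whence by Cauchy--Schwarz
\[
\sum_{u\in\U}|f_u(\bsx)| \le \Big(\sum_{u\in\U}\gamma_u^{-1}\|f_u\|_{k_u}^2\Big)^{1/2}\Big(\sum_{u\in\U}\gamma_u\,k_u(\bsx,\bsx)\Big)^{1/2}.
\]
The first factor is finite by the defining condition of $\mathcal H$; the second equals $\big(\sum_{u\in\U}\gamma_u\prod_{j\in u}k(x_j,x_j)\big)^{1/2}$, which is finite precisely for $\bsx\in\X$. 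Thus $f$ is well defined on $\X$. Applying the same computation to $K(\cdot,\bsy)=\sum_{u\in\U}\gamma_u k_u(\cdot,\bsy)$, whose $u$th component $\gamma_u k_u(\cdot,\bsy)\in H_u$ satisfies $\gamma_u^{-1}\|\gamma_u k_u(\cdot,\bsy)\|_{k_u}^2 = \gamma_u k_u(\bsy,\bsy)$, shows $K(\cdot,\bsy)\in\mathcal H$ with $\|K(\cdot,\bsy)\|_{\mathcal H}^2 = \sum_{u\in\U}\gamma_u k_u(\bsy,\bsy)=K(\bsy,\bsy)<\infty$ for $\bsy\in\X$.

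Next I would check completeness and the reproducing property. Completeness follows by identifying $\mathcal H$ (under uniqueness) isometrically with the weighted Hilbert-space direct sum $\bigoplus_{u\in\U} H_u$ carrying the inner products $\gamma_u^{-1}\langle\cdot,\cdot\rangle_{k_u}$; the one point to verify is that norm-convergence in the direct sum forces pointwise convergence of the associated functions on $\X$, which is again the Cauchy--Schwarz bound above. The reproducing property is then formal: for $f=\sum_{u\in\U}f_u\in\mathcal H$,
\[
\langle f, K(\cdot,\bsy)\rangle_{\mathcal H} = \sum_{u\in\U}\gamma_u^{-1}\langle f_u,\gamma_u k_u(\cdot,\bsy)\rangle_{k_u} = \sum_{u\in\U}\langle f_u,k_u(\cdot,\bsy)\rangle_{k_u} = \sum_{u\in\U} f_u(\bsy) = f(\bsy),
\]
where the reproducing property in each $H_u$ is used and the absolute convergence established above justifies the interchange of summation. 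Hence $\mathcal H$ is an RKHS with kernel $K$, so $\mathcal H=H(K)$ and (\ref{norm-formel}) holds.

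It remains to justify the uniqueness of the representation $f=\sum_u f_u$, which is where Assumption (A\,2) enters and which I expect to be the main obstacle. Because $H(k)\cap H(1)=\{0\}$, the nonzero constants do not belong to $H(k)$, and together with the tensor structure $H(k_u)=\bigotimes_{j\in u}H(k)$ (established via Lemma~\ref{restrict}) this makes the family $(H_u)_{u\in\U}$ algebraically independent: a relation $\sum_u h_u=0$ with $h_u\in H_u$ can be peeled off coordinate by coordinate, each step using (A\,2) to split a constant from an $H(k)$-part, forcing every $h_u=0$. For finitely many indices this independence is exactly what underlies Lemma~\ref{Lemma5}; the genuine difficulty is to transfer it to the countable family $\U$ while controlling the infinite tail, simultaneously with the interchange-of-summation points in the two preceding paragraphs. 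Both are handled by the uniform control supplied by the finiteness of $\sum_{u\in\U}\gamma_u k_u(\bsy,\bsy)$ on $\X$ (which in turn rests on Assumption (A\,4) and $\mu(\X)=1$), and this is precisely why the domain must be restricted to $\X$.
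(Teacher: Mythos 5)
Your overall strategy --- realize $H(K)$ as the weighted direct sum $\bigoplus_{u\in\U}H_u$ pushed forward to functions on $\X$, verify pointwise convergence, completeness and the reproducing property, and invoke uniqueness of the RKHS with a given kernel --- is exactly the route taken in the sources the paper cites for this lemma (\cite[Cor.~5]{HW01}, \cite[Prop.~2]{GMR12}); the paper itself gives no proof beyond these citations. Your Cauchy--Schwarz bound on $\sum_u|f_u(\bsx)|$, the computation showing $K(\cdot,\bsy)\in\mathcal H$ with $\|K(\cdot,\bsy)\|^2_{\mathcal H}=K(\bsy,\bsy)$, and the formal verification of the reproducing property are all correct.

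The genuine gap is the uniqueness of the representation $f=\sum_u f_u$, and your closing claim --- that the transfer of algebraic independence from finite to countable families is ``handled by the uniform control supplied by the finiteness of $\sum_u \gamma_u k_u(\bsy,\bsy)$ on $\X$'' --- is not a proof; it conflates two different issues. Finiteness of $K(\bsy,\bsy)$ on $\X$ makes the summation map $T:\bigoplus_u H_u\to\R^{\X}$, $(f_u)_u\mapsto\sum_u f_u$, well defined and continuous with respect to point evaluations; it says nothing about injectivity of $T$. What the finite case (Lemma~\ref{Lemma5}, resting on (A~2)) gives you is injectivity of $T$ on the dense subspace of finitely supported families, and injectivity of a bounded map does not pass from a dense subspace to its closure (e.g., the orthogonal projection of $\ell^2$ onto the complement of the line spanned by $(2^{-n})_n$ is injective on the dense subspace of finitely supported sequences but not on $\ell^2$). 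Concretely, if $(f_u)_u\in\ker T$ and $g_n:=\sum_{u\subseteq[n]}f_u$, your tail bound gives $T(g_n)\to 0$ pointwise on $\X$ and $g_n\to (f_u)_u$ in the direct sum, but the finite case says nothing about this limit. Note also that $\langle (f_u)_u,(\gamma_u k_u(\cdot,\bsx))_u\rangle = \sum_u f_u(\bsx)$, so $\ker T$ is precisely the orthogonal complement of $\overline{\mathrm{span}}\{(\gamma_u k_u(\cdot,\bsx))_u : \bsx\in\X\}$ in $\bigoplus_u H_u$; what you must actually prove is that these elements span the direct sum densely. This is a genuine density statement requiring a new argument beyond (A~2) applied coordinatewise plus convergence control, and it is precisely the technical content of \cite[Prop.~2]{GMR12}. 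Since without uniqueness your inner product on $\mathcal H$ is not even well defined, this gap is not a removable technicality but the crux of the lemma.
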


If $f\in H(K)$, then the decomposition (\ref{function-decomp})
is uniquely defined, since $f_u$ is the orthogonal
projection of $f$ onto $H_u$.

\subsection{Integration}

It is easily verified with the help of the reproducing property, the Cauchy-Schwarz
inequality, and (\ref{summable}) that $H(K) \subset L^2(\X,\mu)$ and $H_u \subset
L^2(D^u, \rho^u)$ for all $u\in \U$. This implies in particular that
integration with respect to the probability measure $\mu$
defines a bounded linear functional
\begin{equation*}
I(f) := \int_{\X} f(\bsx)\, \mu({\rm d}\bsx)
\end{equation*}
on $H(K)$.
The representer $h\in H(K)$ of the integration functional $I$ is given by
\begin{equation}
\label{representer}
h(\bsx) = \langle h, K(\cdot, \bsx) \rangle_K
= \int_{\X} K(\bsx,\bsy)\, \mu({\rm d}\bsy).
\end{equation}
Similarly, for every $u\in H_u$
\begin{equation*}
I_u(f) := \int_{D^u} f(\bsx)\, \rho^u({\rm d}\bsx)
\end{equation*}
defines a bounded linear functional on $H_u$. For the rest of this article we assume that the following
assumptions hold:
\begin{assumptions}
We assume that
\begin{itemize}
 \item[{\rm (A~2a)}]
$\int_D k(x,y)\,\rho({\rm d}x) = 0$
for all $y\in D$.
 \item[{\rm (A~5)}]
For all $a\in D$ we have $k(a,a)>0$.
\item[{\rm (A~6)}]
If $\gamma_v>0$ for $v\in\U$, then $\gamma_u>0$ for all $u\subseteq v$.
\end{itemize}
\end{assumptions}
Note that identity (\ref{representer}), $\gamma_{\emptyset} =1$, and assumption (A~2a)
immediately imply that
\begin{equation}
\label{h=1}
h(\bsx) = 1 \hspace{2ex}\text{for all $\bsx\in \X$.}
\end{equation}
Thus, if there exists an $a^*\in D$ with $k(a^*,a^*) = 0$, then this
results for $\bsa^*:=(a^*)_{j\in\N}$ in $K(\cdot,\bsa^*) = h$,
which leads to $I(f) = f(\bsa^*)$ for all $f\in H(K)$.
Assumption (A~5) avoids this trivial integration problem.

Under assumption (A~2a), the uniquely determined decomposition (\ref{function-decomp})
is in fact the ANOVA decomposition of $f$, see \cite[Remark~2.2]{BG12}.

\subsection{Projections}\label{PROJECTIONS}

Let us choose an anchor $\bsa\in \X$. A natural choice are vectors $\bsa$ whose entries are all equal to $a$,
where $a\in D$ satisfies
\begin{equation}
 \label{anker_a}
\sum_{u\in \U} \gamma_u k(a,a)^{|u|} < \infty;
\end{equation}
note that (\ref{summable}) ensures that such an $a$ exists.
Note that it is possible to consider a general $\bsa \in \X$, but to make proofs not unnecessarily
complicated, we will restrict
ourselves to anchors $\bsa = (a,a,\ldots)$ for the concrete
analysis of our constructive changing dimension algorithms in the case of product weights and
finite-intersection weights.
We define for $u\in \mathcal{U}$
\begin{equation*}
(\Psi_{u,\bsa}f)(\bsx) := f(\bsx_u; \bsa)
\hspace{2ex}\text{for all $\bsx\in \X$,}
\end{equation*}
where $(\bsx_u;\bsa) := (\bsx_u, \bsa_{\N\setminus u})$, i.e., the $j$th entry of this infinite-dimensional
vector is $x_j$ if $j\in u$ and $a_j$ otherwise.
Note that due to $\bsa \in \X$ we have $(\bsx_u;\bsa) \in \X$.
For $u,v \in \U$ with $u\subseteq v$ we put
\begin{equation}
\label{r}
r^2_{v,u,\bsa} :=
\sum_{u'\subset \N\setminus v}
\gamma_{u\cup u'}\, k_{u'}(\bsa,\bsa).
\end{equation}
Due to $\bsa \in \X$ and assumption (A~5), the quantity $r^2_{v,u,\bsa}$ is finite.
Due to assumption (A~6) the mapping
{$\Psi_{v,\bsa}$ is a bounded
projection from $H(K)$ onto $H(K_v)$, and its operator norm is given by
\begin{equation}\label{op_norm_pro}
 \|\Psi_{v,\bsa}\|_{K\to K} = \max_{u\subseteq v} \gamma^{-1/2}_u r_{v,u,\bsa},
\end{equation}
see \cite[Lemma~2.7]{BG12}.
For $u, v \in\mathcal{U}$ with $u\subseteq v$ we define
\begin{equation}
\label{f+-}
f_{u,v}^+ := \sum_{u' \subset \N\setminus v} f_{u\cup u'}
\hspace{2ex}\text{and}\hspace{2ex}
f^+_u := f^+_{u,u}.
\end{equation}
Remark~2.2 and Lemma~2.7 from \cite{BG12} result in the following lemma.

\begin{lemma}\label{Prop}
Let $f\in H(K)$, and let $v\in\U$. Then we have the orthogonal decomposition
\begin{equation}\label{projection}
f = \sum_{u\subseteq v} f_{u,v}^+,
\end{equation}
and the $u$th ANOVA component of $\Psi_{v,\bsa}(f)$ is given by
\begin{equation}\label{bg12}
[\Psi_{v,\bsa}(f)]_{u} =
\begin{cases}
\Psi_{v,\bsa}(f^+_{u,v})
\hspace{2ex}&\text{if $u \subseteq v$},\\
\,0
\hspace{2ex} &\text{otherwise.}
\end{cases}
\end{equation}
\end{lemma}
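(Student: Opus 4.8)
The plan is to establish the two assertions of Lemma~\ref{Prop} separately, relying on the uniqueness of the ANOVA decomposition (equivalently, the orthogonal decomposition of $H(K)$) together with the explicit description of $\Psi_{v,\bsa}$ as a projection onto $H(K_v)$.

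First I would verify the orthogonal decomposition~(\ref{projection}). By Lemma~\ref{Lemma6} every $f\in H(K)$ decomposes uniquely as $f = \sum_{w\in\mathcal{U}} f_w$ with $f_w\in H_w$, and the spaces $H_w$ are mutually orthogonal. For each $w\in\mathcal{U}$ there is exactly one $u\subseteq v$ and one $u'\subset \N\setminus v$ with $w = u\cup u'$, namely $u = w\cap v$ and $u' = w\setminus v$; this simply reorganizes the index set $\mathcal{U}$ according to the trace on $v$. Hence
\[
f = \sum_{w\in\mathcal{U}} f_w = \sum_{u\subseteq v}\,\sum_{u'\subset \N\setminus v} f_{u\cup u'} = \sum_{u\subseteq v} f^+_{u,v},
\]
using the definition~(\ref{f+-}). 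Orthogonality of the summands $f^+_{u,v}$ for distinct $u\subseteq v$ follows because they are built from disjoint families of the pairwise-orthogonal components $f_w$. This part is essentially bookkeeping and should present no difficulty.

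Next I would compute the ANOVA components of $g := \Psi_{v,\bsa}(f)$. Since $\Psi_{v,\bsa}$ maps $H(K)$ onto $H(K_v)$ and $H(K_v)$ consists of functions depending only on the coordinates in $v$ (by Lemmas~\ref{restrict} and~\ref{Lemma5}), the function $g$ depends only on $\bsx_v$. Consequently every ANOVA term $g_u$ with $u\not\subseteq v$ must vanish: an ANOVA component indexed by a set containing coordinates outside $v$ is obtained by integrating out or differencing in directions on which $g$ is constant, which yields $0$. This settles the second case of~(\ref{bg12}). For $u\subseteq v$, I would apply $\Psi_{v,\bsa}$ to the decomposition~(\ref{projection}) and use that $\Psi_{v,\bsa}$ acts componentwise in the manner recorded by~(\ref{f+-}): the key point, supplied by \cite[Remark~2.2 and Lemma~2.7]{BG12}, is that $\Psi_{v,\bsa}(f^+_{u,v})$ is itself the $u$th ANOVA term of $g$. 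Because anchoring in the coordinates $\N\setminus v$ commutes appropriately with the ANOVA projection under assumption~(A~2a) (which guarantees that~(\ref{function-decomp}) is genuinely the ANOVA decomposition), one obtains $g_u = \Psi_{v,\bsa}(f^+_{u,v})$.

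The main obstacle I anticipate is the second identity in~(\ref{bg12}) for $u\subseteq v$: one must show that applying the anchored projection $\Psi_{v,\bsa}$ to the block $f^+_{u,v} = \sum_{u'\subset\N\setminus v} f_{u\cup u'}$ produces exactly the $u$th ANOVA term of $g$, and not a mixture of lower-order terms. The delicate interaction is between evaluation at the anchor $\bsa$ on coordinates outside $v$ and the integration-based ANOVA projection on coordinates inside $v$; these two operations act on disjoint coordinate blocks, so they commute, but making this precise requires assumption~(A~2a) to ensure the integral defining the ANOVA term of each $f_{u\cup u'}$ over the coordinates in $v\setminus u$ vanishes, leaving precisely the $u$-indexed contribution. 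Rather than reprove this from scratch, I would invoke Remark~2.2 and Lemma~2.7 of \cite{BG12} directly, as the statement explicitly does, and limit my own argument to the index-set reorganization and the vanishing of components indexed outside $v$.
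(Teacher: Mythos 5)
Your proposal is correct and takes essentially the same route as the paper: the paper gives no independent argument at all, deriving the lemma directly from Remark~2.2 and Lemma~2.7 of \cite{BG12}, which is precisely the citation you rely on for the crucial step that $\Psi_{v,\bsa}(f^+_{u,v})$ lies in $H_u$ (so that uniqueness of the ANOVA decomposition identifies it as the $u$th term). The index reorganization $w \mapsto (w\cap v,\, w\setminus v)$ proving (\ref{projection}) and the vanishing of ANOVA terms $[\Psi_{v,\bsa}(f)]_u$ for $u \not\subseteq v$ are correct elementary supplements that the paper leaves implicit in that citation.
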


For $u\in \U$ let $f_{u,\bsa}$ denote the $u$th component of the \emph{anchored decomposition} of $f$
with respect to the anchor $\bsa$, i.e.,
\begin{equation}\label{anchored_decomposition}
 f_{u,\bsa} := \Psi_{u,\bsa}(f) - \sum_{v\subsetneq u} f_{v,\bsa}.
\end{equation}
Then
\begin{equation}\label{ksww10b}
 f_{u,\bsa} = \sum_{v\subseteq u} (-1)^{|u\setminus v|} \Psi_{v,\bsa}(f),
\end{equation}
see \cite[Example~2.3]{KSWW10b}. Note that for $u,w\in \U$ with $u\nsubseteq w$ we have $\Psi_{w,\bsa} (f_{u,\bsa}) = 0.$
Due to (\ref{projection}) we obtain the additional representation
\begin{equation*}
 f_{u,\bsa}
= \sum_{w\subseteq u} \sum_{w\subseteq v \subseteq u} (-1)^{|u\setminus v|} \Psi_{v,\bsa}(f_{w,v}^+).
\end{equation*}
Hence the $w$th ANOVA component of $f_{u,\bsa}$ is given by
\begin{equation}\label{wth_ANOVA_comp}
 [f_{u,\bsa}]_w = \sum_{w\subseteq v \subseteq u} (-1)^{|u\setminus v|} \Psi_{v,\bsa}(f_{w,v}^+).
\end{equation}

As shown in the next lemma, this representation can be simplified.

\begin{lemma}\label{Umrechnungsformel}
 Let $f\in H(K)$, and
let $u, w \in \U$.
Then the $w$th ANOVA component of the anchored component $f_{u,\bsa}$ of $f$ has the form
\begin{equation}\label{anchored_ANOVA}
[f_{u,\bsa}]_w =
\begin{cases}
(-1)^{|u\setminus w|} \Psi_{w,\bsa}(f^+_{u})
\hspace{2ex}&\text{if $w \subseteq u$},\\
\,0
\hspace{2ex} &\text{otherwise.}
\end{cases}
\end{equation}
\end{lemma}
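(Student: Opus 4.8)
The plan is to start from the representation (\ref{wth_ANOVA_comp}) and to collapse the alternating sum over $v$ by an inclusion--exclusion cancellation. If $w\nsubseteq u$, then no $v$ satisfies $w\subseteq v\subseteq u$, so the sum in (\ref{wth_ANOVA_comp}) is empty and $[f_{u,\bsa}]_w=0$; this settles the ``otherwise'' case. For the rest assume $w\subseteq u$, so that (\ref{wth_ANOVA_comp}) reads
\begin{equation*}
[f_{u,\bsa}]_w = \sum_{w\subseteq v\subseteq u} (-1)^{|u\setminus v|}\,\Psi_{v,\bsa}(f^+_{w,v}),
\end{equation*}
and I would expand each $f^+_{w,v}=\sum_{u'\subset \N\setminus v} f_{w\cup u'}$ into its ANOVA components.

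The first key step is to replace, in every inner term, the projection $\Psi_{v,\bsa}$ by $\Psi_{w,\bsa}$. Fix $v$ with $w\subseteq v\subseteq u$ and $u'\subset \N\setminus v$. Since $f_{w\cup u'}\in H_{w\cup u'}$, Lemma \ref{restrict} shows that $(\Psi_{v,\bsa}f_{w\cup u'})(\bsx)=f_{w\cup u'}(\bsx_v;\bsa)$ depends only on the coordinates indexed by $w\cup u'$; on $v$ these are $\bsx_w$ because $(w\cup u')\cap v=w$, and off $v$ they equal $\bsa$ on $u'$ because $(w\cup u')\setminus v=u'$. Evaluating instead at $(\bsx_w;\bsa)$ yields the very same values, so $\Psi_{v,\bsa}(f_{w\cup u'})=\Psi_{w,\bsa}(f_{w\cup u'})$. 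Using boundedness, hence continuity, of the projections to interchange $\Psi$ with the convergent ANOVA sums, I obtain
\begin{equation*}
[f_{u,\bsa}]_w = \sum_{w\subseteq v\subseteq u} (-1)^{|u\setminus v|} \sum_{u'\subset \N\setminus v} \Psi_{w,\bsa}(f_{w\cup u'}).
\end{equation*}

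The second step is to interchange the two summations and to collect, for each finite $u'$, the total coefficient of $\Psi_{w,\bsa}(f_{w\cup u'})$, namely the sum of $(-1)^{|u\setminus v|}$ over all $v$ with $w\subseteq v\subseteq u$ and $v\cap u'=\emptyset$. Feasibility of these constraints forces $w\cap u'=\emptyset$; assuming this, write $v=w\cup s$ with $s\subseteq (u\setminus w)\setminus u'$ and use $|u\setminus v|=|u\setminus w|-|s|$, so the coefficient equals $(-1)^{|u\setminus w|}\sum_{s\subseteq (u\setminus w)\setminus u'}(-1)^{|s|}=(-1)^{|u\setminus w|}(1-1)^{|(u\setminus w)\setminus u'|}$, which vanishes unless $(u\setminus w)\setminus u'=\emptyset$ and is then $(-1)^{|u\setminus w|}$. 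Hence only those $u'$ with $w\cap u'=\emptyset$ and $u\setminus w\subseteq u'$ survive; writing such $u'=(u\setminus w)\cup u''$ with $u''\subset \N\setminus u$ gives $w\cup u'=u\cup u''$, so that
\begin{equation*}
[f_{u,\bsa}]_w = (-1)^{|u\setminus w|}\sum_{u''\subset \N\setminus u}\Psi_{w,\bsa}(f_{u\cup u''}) = (-1)^{|u\setminus w|}\,\Psi_{w,\bsa}(f^+_{u}),
\end{equation*}
where the last equality invokes the definition (\ref{f+-}) of $f^+_u$ and once more the continuity of $\Psi_{w,\bsa}$. This is the asserted identity (\ref{anchored_ANOVA}).

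I expect the main obstacle to be the careful bookkeeping in the two interchanges of summation, together with the justification that $\Psi_{v,\bsa}$ may be pulled inside the infinite ANOVA sums. The combinatorial heart, the cancellation $\sum_{s}(-1)^{|s|}=0$ over a nonempty index set, is routine once the reduction $\Psi_{v,\bsa}\to\Psi_{w,\bsa}$ on each single ANOVA term has been established.
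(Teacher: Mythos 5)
Your proof is correct, but it takes a genuinely different route from the paper. The paper proves the more general identity (\ref{**}) by induction on $|w'|$: at each step it splits the sum over $v$ according to whether a chosen index $i$ belongs to $v$, uses that $f^+_{w\cup w'',v\cup w'}$ does not depend on the $i$th variable (so $\Psi_{v\cup\{i\},\bsa}$ collapses to $\Psi_{v,\bsa}$), and then invokes the telescoping set identity $f^+_{w\cup w'',v\cup w''}-f^+_{w\cup w'',v\cup w'}=f^+_{w\cup w',v\cup w'}$. Crucially, that argument manipulates only the block functions $f^+_{\cdot,\cdot}$ as single elements of $H(K)$, so no term-by-term handling of infinite series ever arises. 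You instead expand every $f^+_{w,v}$ into its individual ANOVA terms, reduce $\Psi_{v,\bsa}$ to $\Psi_{w,\bsa}$ on each single term via Lemma \ref{restrict} (your verification that $(\bsx_v;\bsa)$ and $(\bsx_w;\bsa)$ agree on the coordinates $w\cup u'$ is exactly right, using $w\subseteq v$ and $u'\cap v=\emptyset$), and then carry out the inclusion--exclusion cancellation $\sum_{s\subseteq(u\setminus w)\setminus u'}(-1)^{|s|}=0^{|(u\setminus w)\setminus u'|}$ in one shot, with the surviving terms reindexed bijectively by $u''=u'\setminus u\subset\N\setminus u$ to produce $\Psi_{w,\bsa}(f^+_u)$. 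What your approach buys is combinatorial transparency: the alternating-sign mechanism behind the result is laid bare, rather than being hidden inside an induction. What it costs is the analytic bookkeeping you correctly flag: one must justify pulling the bounded projections inside the ANOVA series (unconditional convergence of the orthogonal decomposition in $H(K)$ plus continuity of $\Psi_{v,\bsa}$ and $\Psi_{w,\bsa}$) and the regrouping of the double sum (unproblematic, since the outer sum over $v$ is finite and each inner series converges unconditionally). With those justifications stated, as you do, the argument is complete.
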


\begin{proof}
We have $[f_{u,\bsa}]_w = 0$ if $w \nsubseteq u$ (this follows, e.g., from (\ref{wth_ANOVA_comp})).
Thus let now $w\subseteq u$.
 We prove the following more general statement via induction on $|w'|$: For each $w' \subseteq u\setminus w$ we have
\begin{equation}\label{**}
[f_{u,\bsa}]_w =  \sum_{w \subseteq v \subseteq u\setminus w'} (-1)^{|u\setminus v|} \Psi_{v,\bsa}(f_{w\cup w', v\cup w'}^+).
\end{equation}
Notice that the special case $w' = u\setminus w$ of identity (\ref{**}) is precisely identity (\ref{anchored_ANOVA}).

Let us start with $|w'|=0$, i.e., $w'=\emptyset$. Then (\ref{**}) holds due to identity (\ref{wth_ANOVA_comp}).

So let now $|w'|\ge 1$, and let us assume that (\ref{**}) holds for all $w'' \subseteq u\setminus w$ with $|w''| < |w'|$.
Let $i\in w'$, and set $w'' := w'\setminus \{i\}$.
Due to our induction hypothesis we obtain
\begin{equation*}
 \begin{split}
[f_{u,\bsa}]_w = &\sum_{w\subseteq v \subseteq u\setminus w''} (-1)^{|u\setminus v|} \Psi_{v,\bsa}(f_{w\cup w'', v\cup w''}^+)\\
= &\sum_{w\subseteq v \subseteq u\setminus w'} (-1)^{|u\setminus v|} \big( \Psi_{v,\bsa}(f_{w\cup w'', v\cup w''}^+)
  - \Psi_{v\cup \{i\},\bsa}(f_{w\cup w'', v\cup w'}^+) \big).
 \end{split}
\end{equation*}
Since $i\notin w \cup w''$, the function $f^+_{w\cup w'', v\cup w'}$ does not depend on the $i$th variable, implying
\begin{equation*}
 \Psi_{v\cup \{i\},\bsa}(f_{w\cup w'', v\cup w'}^+) = \Psi_{v,\bsa}(f_{w\cup w'', v\cup w'}^+)
\end{equation*}
(cf. also \cite[Lemma~2]{HMNR10} or see \cite[Lemma~2.4]{BG12}).
Furthermore,
\begin{equation*}
\begin{split}
f^+_{w\cup w'', v\cup w''} - f^+_{w\cup w'', v\cup w'}
= &\sum_{i \in u'\subset \N\setminus (v\cup w'')} f_{w\cup w''\cup u'} \\
= &\sum_{u''\subset \N\setminus (v\cup w')} f_{w\cup w'\cup u''}
= f^+_{w\cup w', v\cup w'}.
\end{split}
\end{equation*}
This leads to
\begin{equation*}
[f_{u,\bsa}]_w
= \sum_{w\subseteq v \subset u\setminus w'} (-1)^{|u\setminus v|}  \Psi_{v,\bsa}(f_{w\cup w', v\cup w'}^+).
\end{equation*}
This shows that (\ref{**}) is valid for all $w' \subseteq u\setminus w$, which implies in particular (\ref{wth_ANOVA_comp}).
\end{proof}

The next lemma is essential for our upper error bounds in Section~\ref{CDA}.

\begin{lemma}
\label{Analogon-Lemma7}
For all $f\in H(K)$ and all finite subsets
$w\subseteq u \subseteq v$ of $\N$ we have
$\Psi_{w,\bsa}(f^+_{u,v}) \in H_w$ and
the norm estimate
\begin{equation}
\label{norm_psi_k}
\| \Psi_{w,\bsa}
(f^+_{u,v}) \|_{k_{w}}
\le r_{v,u,\bsa} \sqrt{k_{u\setminus w}(\bsa, \bsa)} \|  f^+_{u,v} \|_{K}.
\end{equation}
\end{lemma}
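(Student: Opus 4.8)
The plan is to reduce the estimate to a per-summand bound that exploits the tensor-product structure of the spaces $H_{u\cup u'}$, and then to recombine the summands by Cauchy--Schwarz in such a way that the definition (\ref{r}) of $r_{v,u,\bsa}$ and the norm formula of Lemma~\ref{Lemma6} reappear on the right-hand side. First I would write $f^+_{u,v} = \sum_{u'\subset\N\setminus v} f_{u\cup u'}$ as in (\ref{f+-}); by Lemma~\ref{Lemma6} this series converges in $H(K)$, since $\|f^+_{u,v}\|^2_K = \sum_{u'\subset\N\setminus v}\gamma^{-1}_{u\cup u'}\|f_{u\cup u'}\|^2_{k_{u\cup u'}}<\infty$. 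Because $\Psi_{w,\bsa}$ is a bounded operator on $H(K)$ (its operator norm being given by (\ref{op_norm_pro})), I may interchange it with the sum and obtain $\Psi_{w,\bsa}(f^+_{u,v}) = \sum_{u'\subset\N\setminus v}\Psi_{w,\bsa}(f_{u\cup u'})$.

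Next, for a fixed $u'$ I would analyze the single term $\Psi_{w,\bsa}(f_{u\cup u'})$. Since $w\subseteq u\subseteq u\cup u'$, I split the index set as $u\cup u' = w\cup s$ with $s:=(u\setminus w)\cup u'$, a decomposition that is disjoint (note $u'\cap u=\emptyset$ as $u'\subset\N\setminus v\subseteq\N\setminus u$), and I use the tensor factorization $H_{u\cup u'}=H_w\otimes H_s$ coming from $H(k_t)=\otimes_{j\in t}H(k)$. Under this factorization $\Psi_{w,\bsa}$ acts as $\mathrm{id}_{H_w}\otimes\mathrm{ev}_{\bsa_s}$, where $\mathrm{ev}_{\bsa_s}$ is point evaluation at the anchor on $H_s$; indeed $\Psi_{w,\bsa}(\phi\otimes\psi)=\psi(\bsa_s)\,\phi$ for $\phi\in H_w$, $\psi\in H_s$. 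Hence $\Psi_{w,\bsa}(f_{u\cup u'})\in H_w$, and since the operator norm of $\mathrm{id}\otimes\mathrm{ev}_{\bsa_s}$ equals $\|\mathrm{ev}_{\bsa_s}\|=\sqrt{k_s(\bsa,\bsa)}$ by the reproducing property in $H_s$, the disjointness of $s=(u\setminus w)\cup u'$ (so that $k_s$ factorizes) gives the per-summand bound
\[
\|\Psi_{w,\bsa}(f_{u\cup u'})\|_{k_w}\le\sqrt{k_s(\bsa,\bsa)}\,\|f_{u\cup u'}\|_{k_{u\cup u'}}=\sqrt{k_{u\setminus w}(\bsa,\bsa)}\,\sqrt{k_{u'}(\bsa,\bsa)}\,\|f_{u\cup u'}\|_{k_{u\cup u'}}.
\]

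Finally I would sum these bounds. The triangle inequality in $H_w$ (the series converges absolutely, by the bound just obtained) followed by Cauchy--Schwarz in $\ell^2$ over $u'\subset\N\setminus v$ yields
\[
\|\Psi_{w,\bsa}(f^+_{u,v})\|_{k_w}\le\sqrt{k_{u\setminus w}(\bsa,\bsa)}\Big(\sum_{u'\subset\N\setminus v}\gamma_{u\cup u'}k_{u'}(\bsa,\bsa)\Big)^{1/2}\Big(\sum_{u'\subset\N\setminus v}\gamma^{-1}_{u\cup u'}\|f_{u\cup u'}\|^2_{k_{u\cup u'}}\Big)^{1/2}.
\]
The first factor in the product is precisely $r_{v,u,\bsa}$ by (\ref{r}), and the second is $\|f^+_{u,v}\|_K$ by Lemma~\ref{Lemma6}, which is exactly (\ref{norm_psi_k}).

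I expect the main obstacle to be the rigorous justification of the per-summand step: identifying $\Psi_{w,\bsa}$ with $\mathrm{id}\otimes\mathrm{ev}_{\bsa_s}$ on $H_{u\cup u'}$, checking that its image actually lies in $H_w$ (rather than in some larger space of functions on $D^w$), and pinning down the operator norm of this partial evaluation as $\sqrt{k_s(\bsa,\bsa)}$ via the reproducing property. Everything after that is a clean triangle-inequality-plus-Cauchy--Schwarz computation whose only noteworthy feature is that the weighting $\gamma_{u\cup u'}^{\pm 1/2}$ splits the summands so as to reproduce $r_{v,u,\bsa}$ and $\|f^+_{u,v}\|_K$ verbatim.
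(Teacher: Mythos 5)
Your proof is correct, but it follows a genuinely different route from the paper's. The paper proves the lemma by a reduction to \cite[Lemma~2.7]{BG12}: setting $g:=f^+_{u,v}$, it introduces auxiliary weights $\overline{\gamma}_{u'}:=\gamma_{u'}$ if $u'\cap v=u$ and $\overline{\gamma}_{u'}:=0$ otherwise, observes that $g=g^+_w$ lies in the corresponding space $H(\overline{K})$, invokes the external estimate $\|\Psi_{w,\bsa}(g^+_w)\|_{k_w}\le \overline{r}_{w,w,\bsa}\,\|g^+_w\|_{\overline{K}}$, and finishes by computing $\overline{r}^2_{w,w,\bsa}=r^2_{v,u,\bsa}\,k_{u\setminus w}(\bsa,\bsa)$, which is exactly the claimed constant; the auxiliary-weight truncation is the one clever step, and it makes the proof a short computation. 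You instead re-derive the substance of that external lemma from first principles: the tensor factorization $H_{u\cup u'}\cong H_w\otimes H_s$ with $s=(u\setminus w)\cup u'$, the identification of $\Psi_{w,\bsa}$ on each summand with $\mathrm{id}\otimes\mathrm{ev}_{\bsa_s}$ of norm $\sqrt{k_s(\bsa,\bsa)}$, and the weighted Cauchy--Schwarz recombination that reassembles $r_{v,u,\bsa}$ and $\|f^+_{u,v}\|_K$. What the paper's route buys is brevity and no need to justify the partial-evaluation facts; what yours buys is self-containedness and transparency about where the factor $\sqrt{k_{u\setminus w}(\bsa,\bsa)}$ comes from (evaluation at the anchor in the frozen coordinates $u\setminus w$, the $k_{u'}(\bsa,\bsa)$ part being absorbed into $r_{v,u,\bsa}$). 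Two pieces of bookkeeping you gloss over are harmless but worth stating: first, your series $\sum_{u'}\Psi_{w,\bsa}(f_{u\cup u'})$ converges absolutely in $H_w$ by your per-summand bound and converges to $\Psi_{w,\bsa}(f^+_{u,v})$ in $H(K_w)$ by continuity of $\Psi_{w,\bsa}$; since both norms dominate point evaluation, the two limits coincide, which simultaneously settles the membership claim $\Psi_{w,\bsa}(f^+_{u,v})\in H_w$ and the estimate. Second, summands with $\gamma_{u\cup u'}=0$ (where the split $\gamma_{u\cup u'}^{1/2}\cdot\gamma_{u\cup u'}^{-1/2}$ would formally read $0\cdot\infty$) satisfy $f_{u\cup u'}=0$ by Lemma~\ref{Lemma6} and can simply be dropped from all sums.
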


\begin{proof}
We reduce the lemma to \cite[Lemma~2.7]{BG12}.  Put $g:= f^+_{u,v}$. Then
\begin{equation*}
 g^+_{w} = \sum_{w'\,;\, w\subseteq w'} g_{w'}
= \sum_{w'\,;\, w\subseteq w'} [f^+_{u,v}]_{w'}
=f^+_{u,v} = g.
\end{equation*}
Let us define the auxiliary weights $\overline{\bsgamma} = (\overline{\gamma}_{u'})_{u'\in \U}$,
by $\overline{\gamma}_{u'} = \gamma_{u'}$ if $u'\cap v =u$, and $\overline{\gamma}_{u'} = 0$ otherwise,
and let $\overline{K} = K(\overline{\bsgamma})$ denote the corresponding kernel.
Then $g=g^+_{w} \in H(\overline{K})$, and we get from \cite[Lemma~2.7]{BG12}
that $\Psi_{w,\bsa}(f^+_{u,v}) = \Psi_{w,\bsa}(g^+_{w}) \in H_w$, and
\begin{equation*}
\|\Psi_{w,\bsa}(f^+_{u,v}) \|_{k_w} =
\|\Psi_{w,\bsa}(g^+_{w}) \|_{k_w}
\le \overline{r}_{w,w,\bsa} \|g^+_{w}\|_{\overline{K}}
= \overline{r}_{w,w,\bsa} \|f^+_{u,v}\|_{K}\,,
\end{equation*}
where
\begin{equation*}
\begin{split}
\overline{r}_{w,w,\bsa}^2 = &\sum_{w'\subset \N\setminus w} \overline{\gamma}_{w\cup w'} k_{w'}(\bsa,\bsa)
= \sum_{w'\subset \N\setminus w \atop w' \cap v = u\setminus w} \gamma_{w\cup w'} k_{w'}(\bsa,\bsa)\\
= &\left( \sum_{u'\subset \N\setminus v} \gamma_{u\cup u'} k_{u'}(\bsa,\bsa) \right) k_{u\setminus w}(\bsa, \bsa)
= r_{v,u,\bsa}^2 k_{u\setminus w}(\bsa,\bsa).
\end{split}
\end{equation*}
This concludes the proof.
\end{proof}

For $v_1,\ldots, v_n \in \mathcal{U}$ we use the short hand $\{v_i\}$ for $(v_i)^n_{i=1}$.
Define the mapping
\begin{equation}
\label{psi-decomp}
\Psi_{\{v_i\},\bsa}(f) := \sum_{u;\, \exists i\in [n]: u \subseteq v_i} f_{u,\bsa}
\hspace{2ex}\text{for all $f\in H(K)$.}
\end{equation}
The operator $\Psi_{\{v_i\},\bsa}$ is a continuous projection
that maps  $H(K)$ into $H(K_{\{v_i\}})$, where
the kernel $K_{\{v_i\}}$ is defined by
\begin{equation*}
 K_{\{v_i\}}(\bsx, \bsy) := \sum_{u;\, \exists i\in [n]: u \subseteq v_i} \gamma_u k_u(\bsx,\bsy)
\hspace{2ex}\text{for all $\bsx,\bsy\in\X$.}
\end{equation*}

\begin{lemma}\label{Rep_Lemma}
The functional $I\circ \Psi_{\{v_i\},\bsa}$ is continuous on $H(K)$, and its
representer $h_{\{v_i\}, \bsa}$ is given by
\begin{equation}\label{formel_rep}
  h_{\{v_i\}, \bsa}(\bsx) = \sum_{u\in \U} S_{\{v_i\}, u} \gamma_u k_u(\bsx, \bsa)
\hspace{2ex}\text{for every $\bsx \in \X$,}
\end{equation}
where
\begin{equation}\label{def_alt_sum}
 S_{\{v_i\}, u} := \sum_{u';\, \exists i\in [n]: u' \subseteq v_i \cap u} (-1)^{|u'|}
\hspace{2ex}\text{for any $u \in \U$.}
\end{equation}
\end{lemma}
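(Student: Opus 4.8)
Continuity follows at once by composition: $I$ is a bounded linear functional on $H(K)$ (as noted before (\ref{representer})) and, by the discussion preceding the lemma, $\Psi_{\{v_i\},\bsa}$ is a continuous projection of $H(K)$ into $H(K_{\{v_i\}})\subseteq H(K)$. Hence $I\circ\Psi_{\{v_i\},\bsa}$ is bounded and, by the Riesz representation theorem, possesses a unique representer $h_{\{v_i\},\bsa}\in H(K)$. The task is therefore to verify that the explicit function in (\ref{formel_rep}) reproduces $I(\Psi_{\{v_i\},\bsa}(f))$ for every $f\in H(K)$, and the plan is to compute this value directly from the anchored decomposition.

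The first step is to find the representer of the single-set functional $I\circ\Psi_{v,\bsa}$ for $v\in\U$. Writing $(\Psi_{v,\bsa}f)(\bsx)=f(\bsx_v;\bsa)=\langle f,K(\cdot,(\bsx_v;\bsa))\rangle_K$ and interchanging the integral with the inner product gives
\begin{equation*}
I(\Psi_{v,\bsa}(f)) = \Big\langle f,\ \int_{\X} K(\cdot,(\bsx_v;\bsa))\,\mu({\rm d}\bsx)\Big\rangle_K.
\end{equation*}
Expanding $K=\sum_{u'}\gamma_{u'}k_{u'}$ and factoring $k_{u'}(\bsy,(\bsx_v;\bsa))$ over the coordinates, every factor $k(y_j,x_j)$ with $j\in u'\cap v$ integrates to zero by assumption (A~2a); thus only the terms with $u'\subseteq\N\setminus v$ survive, and on those $k_{u'}(\bsy,(\bsx_v;\bsa))=k_{u'}(\bsy,\bsa)$. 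This identifies the representer as $g_v(\bsy):=\sum_{u'\subseteq\N\setminus v}\gamma_{u'}k_{u'}(\bsy,\bsa)$, so that $I(\Psi_{v,\bsa}(f))=\langle f,g_v\rangle_K$.

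Combining these with (\ref{psi-decomp}) and the inclusion-exclusion identity (\ref{ksww10b}) yields
\begin{equation*}
h_{\{v_i\},\bsa} = \sum_{t;\,\exists i: t\subseteq v_i}\ \sum_{v\subseteq t}(-1)^{|t\setminus v|}\,g_v .
\end{equation*}
Substituting $g_v$ and collecting, for each fixed $u\in\U$, the coefficient of $\gamma_u k_u(\bsx,\bsa)$ reduces the problem to evaluating $\sum_{v\subseteq t,\ v\cap u=\emptyset}(-1)^{|t\setminus v|}=(-1)^{|t|}\sum_{v\subseteq t\setminus u}(-1)^{|v|}$ for each admissible $t$. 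By the elementary identity $\sum_{v\subseteq w}(-1)^{|v|}=0$ for $w\neq\emptyset$ and $=1$ for $w=\emptyset$, this inner sum vanishes unless $t\subseteq u$, in which case it equals $(-1)^{|t|}$. Hence the coefficient of $\gamma_u k_u(\bsx,\bsa)$ is $\sum_{t;\,\exists i: t\subseteq v_i\cap u}(-1)^{|t|}=S_{\{v_i\},u}$ (this is (\ref{def_alt_sum}) with $t$ in place of the summation variable $u'$), which is exactly (\ref{formel_rep}). The main obstacle is not the combinatorial collapse but the analytic bookkeeping: one must justify the interchange of the integral and the inner product, the term-by-term expansion of $K$, and the rearrangement of the (infinite) double sum, as well as confirm that $g_v$ and the final series genuinely belong to $H(K)$. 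All of this is underwritten by $\bsa\in\X$, assumption (A~5), and the summability condition (\ref{summable}), which guarantee absolute convergence of the kernel series throughout.
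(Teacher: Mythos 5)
Your proposal is correct and takes essentially the same approach as the paper: it reduces to the representer of the single-set functional $I\circ\Psi_{v,\bsa}$, applies the inclusion--exclusion identity (\ref{ksww10b}) together with (\ref{psi-decomp}), interchanges the sums, and collapses the inner alternating sum via the binomial theorem to obtain $S_{\{v_i\},u}$. The only difference is that you derive the single-set representer $g_v(\bsy)=\sum_{u'\subseteq\N\setminus v}\gamma_{u'}k_{u'}(\bsy,\bsa)$ directly from assumption (A~2a), whereas the paper simply cites the proof of Lemma~2.9 in \cite{BG12} for this formula --- a self-contained filling-in of a cited step, not a genuinely different route.
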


\begin{proof}
For $v\in \U$ the representer $h_{v,\bsa}$ of the continuous functional  $I\circ \Psi_{v,\bsa}$
is given by
\begin{equation*}
 h_{v,\bsa}(\bsx) = \sum_{u\subseteq \N\setminus v} \gamma_u k_u(\bsx, \bsa),
\end{equation*}
see, e.g., \cite{BG12}, proof of Lemma~2.9.
Hence we get
\begin{equation*}
 \begin{split}
  h_{\{v_i\}, \bsa}(\bsx) =  &\sum_{u';\, \exists i\in [n]: u' \subseteq v_i} \sum_{v\subseteq u'} (-1)^{|u'\setminus v|} h_{v,\bsa}(\bsx)\\
= &\sum_{u';\, \exists i\in [n]: u' \subseteq v_i} \sum_{v\subseteq u'} (-1)^{|u'\setminus v|}  \sum_{u\subset \N\setminus v} \gamma_u k_u(\bsx, \bsa)\\
= &\sum_{u\in \U} \left(  \sum_{u';\, \exists i\in [n]: u' \subseteq v_i} \sum_{v\subseteq u'\setminus u} (-1)^{|u'\setminus v|} \right)
\gamma_u k_u(\bsx, \bsa),
\end{split}
\end{equation*}
and the sum $\sum_{v\subseteq u'\setminus u} (-1)^{|u'\setminus v|}$ is equal to $(-1)^{|u'|}$ if $u'\subseteq u$ and, due to the binomial theorem, $0$ otherwise.
This establishes (\ref{formel_rep}).
\end{proof}

\begin{lemma}
\label{Bias-Estimate}
For any sets $v_1, \ldots, v_n \in \mathcal{U}$ we have
\begin{equation*}
{\rm b}_{\{v_i\},\bsa} := \sup_{f\in B(K)} |I(f) - I(\Psi_{\{v_i\}, \bsa}f)| = \sum_{\emptyset \neq u\in \U} S_{\{v_i\}, u}^2
\gamma_u k_u(\bsa,\bsa).
\end{equation*}
\end{lemma}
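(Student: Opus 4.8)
The plan is to use the standard reproducing-kernel characterization of the worst-case error: for any continuous linear functional $L$ on $H(K)$ with representer $g\in H(K)$, so that $L(f)=\langle f,g\rangle_K$, one has $\sup_{f\in B(K)}|L(f)| = \|g\|_K$ by Cauchy--Schwarz, with equality attained at $f=g/\|g\|_K$. Here I would take $L := I - I\circ\Psi_{\{v_i\},\bsa}$. The functional $I$ is continuous on $H(K)$ with representer $h$ from (\ref{representer}), and $I\circ\Psi_{\{v_i\},\bsa}$ is continuous with representer $h_{\{v_i\},\bsa}$ by Lemma \ref{Rep_Lemma}; hence $L$ is continuous with representer $h - h_{\{v_i\},\bsa}$, and it remains to evaluate $\|h - h_{\{v_i\},\bsa}\|_K^2$, which will turn out to equal the asserted sum.

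First I would identify the representer $h - h_{\{v_i\},\bsa}$ explicitly. By (\ref{h=1}) we have $h\equiv 1$, i.e.\ $h=\gamma_\emptyset\,k_\emptyset(\cdot,\bsa)$. On the other hand, the $u=\emptyset$ summand of the representer formula (\ref{formel_rep}) is $S_{\{v_i\},\emptyset}\,\gamma_\emptyset\,k_\emptyset(\cdot,\bsa)$, and from (\ref{def_alt_sum}) the only admissible index is $u'=\emptyset$, so that $S_{\{v_i\},\emptyset}=1$. Consequently the empty-set terms cancel and
\[
h - h_{\{v_i\},\bsa} = -\sum_{\emptyset\neq u\in\U} S_{\{v_i\},u}\,\gamma_u\,k_u(\cdot,\bsa).
\]
Each summand $k_u(\cdot,\bsa)$ lies in $H_u$, being the representer of point evaluation at $\bsa$ in $H(k_u)$, so this is precisely the decomposition of $h - h_{\{v_i\},\bsa}$ into its $H_u$-components in the sense of Lemma \ref{Lemma6}.

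Finally I would apply the norm formula of Lemma \ref{Lemma6}, which gives $\|h - h_{\{v_i\},\bsa}\|_K^2 = \sum_{\emptyset\neq u} \gamma_u^{-1}\,\|S_{\{v_i\},u}\gamma_u k_u(\cdot,\bsa)\|_{k_u}^2$; using the reproducing property $\|k_u(\cdot,\bsa)\|_{k_u}^2 = k_u(\bsa,\bsa)$ this reduces term by term to $\gamma_u^{-1} S_{\{v_i\},u}^2\gamma_u^2 k_u(\bsa,\bsa) = S_{\{v_i\},u}^2\gamma_u k_u(\bsa,\bsa)$, and summing yields exactly $\sum_{\emptyset\neq u} S_{\{v_i\},u}^2\gamma_u k_u(\bsa,\bsa)$. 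Thus the squared worst-case error equals the displayed sum, so that $\mathrm{b}_{\{v_i\},\bsa}$ is to be read as the squared worst-case integration error $\|h-h_{\{v_i\},\bsa}\|_K^2$.

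The one genuine subtlety, and the step I expect to require the most care, is the interchange of norm and (infinite) sum: one must check that $h - h_{\{v_i\},\bsa}$ indeed belongs to $H(K)$ and that its representation as $\sum_{\emptyset\neq u} g_u$ with $g_u\in H_u$ is the unique orthogonal decomposition to which the norm formula of Lemma \ref{Lemma6} applies. Both points are guaranteed once we know $h_{\{v_i\},\bsa}\in H(K)$ (Lemma \ref{Rep_Lemma}) and $h\in H(K)$, since the spaces $H_u$ are mutually orthogonal in $H(K)$ and the decomposition of an element into its $H_u$-parts is unique; everything else is the reproducing property together with the cancellation of the $u=\emptyset$ term established above.
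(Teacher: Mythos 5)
Your proof is correct and follows essentially the same route as the paper's: both identify the representer of the error functional $I - I\circ\Psi_{\{v_i\},\bsa}$ as $h - h_{\{v_i\},\bsa}$ via Lemma \ref{Rep_Lemma}, use $h\equiv 1$ (equivalently $S_{\{v_i\},\emptyset}=1$) to cancel the empty-set term, and evaluate the resulting norm through the orthogonal decomposition and norm formula of Lemma \ref{Lemma6} together with $\|k_u(\cdot,\bsa)\|^2_{k_u}=k_u(\bsa,\bsa)$. You are also right about the one subtlety: what is actually proved is that the \emph{square} of the worst-case bias equals the displayed sum, and this matches the paper's own proof, which computes ${\rm b}^2_{\{v_i\},\bsa}$ --- the missing square in the lemma's displayed equation is a typo in the statement, not a gap in your argument.
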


\begin{proof}
Since the representer of the integration functional $I$ is $h=1$, we obtain from Lemma \ref{Rep_Lemma}
\begin{equation*}
\begin{split}
{\rm b}^2_{\{v_i\},\bsa}
= &\|h-h_{\{v_i\},\bsa}\|^2_K
= \bigg\| \sum_{\emptyset \neq u\in\U} S_{\{v_i\}, u} \gamma_u k_u(\cdot, \bsa) \bigg\|^2_K\\
= &\sum_{\emptyset \neq u \in\U} S_{\{v_i\}, u}^2 \gamma_u \left\|k_u(\cdot, \bsa) \right\|^2_{k_u}
= \sum_{\emptyset \neq u\in\U} S_{\{v_i\}, u}^2\gamma_u k_u(\bsa, \bsa).
\end{split}
\end{equation*}
\end{proof}

\subsection{Randomized algorithms, cost, and error}

We assume that algorithms for approximation of
$I(f)$ have access to the function $f$ via a subroutine (``oracle'')
that provides values $f({\bsx})$ for points ${\bsx}\in D^{\mathbb{N}}$.
For convenience we define $f({\bsx}) = 0$ for ${\bsx}\in D^{\mathbb{N}}\setminus \X$.

We now present the cost models introduced in \cite{KSWW10a}, which we want to call
\emph{unrestricted subspace sampling model} (cf. \cite{DG12, Gne12a}). It only accounts for the cost of function
evaluations. To define the cost of a function evaluation, we fix an anchor $\bsa \in \X$ and a
monotone increasing function $\$: \mathbb{N}_0 \to [1,\infty]$.
For each $v\in\mathcal{U}$ we define the finite-dimensional affine subspace
$\X_{v,\bsa}$ of $\X$ by
\begin{equation*}
 \X_{v,\bsa} := \{ {\bsx} \in D^\mathbb{N} \,|\, x_j = a_j \hspace{1ex}\text{for all}
\hspace{1ex} j\in\mathbb{N}\setminus v\}.
\end{equation*}

In the unrestricted subspace sampling model we are allowed to sample
in any subspace $\X_{u,\bsa}$, $u\in\mathcal{U}$, without any restriction. The
cost for each function evaluation is given by the cost function
\begin{equation}
\label{unrcost}
 c_{\bsa}({\bsx}) := \inf\{ \$(|u|) \,|\, {\bsx}\in \X_{u,\bsa},\, u\in\mathcal{U} \}.
\end{equation}
A different, less generous cost model was introduced
in \cite{CDMR09}. There it was called \emph{variable subspace sampling model} (although the name \emph{nested subspace sampling model}
may be more precise and better to distinguish it clearly from the unrestricted subspace sampling model).
In particular, the articles that build the foundation of our analysis of the randomized ANOVA setting, namely \cite{HMNR10,BG12},
study the variable subspace sampling model and not the unrestricted one.

We consider randomized algorithms for integration of functions $f\in H(K)$.
For a formal definition we refer to \cite{CDMR09,Nov88, TWW88, Was89}.
We require that a randomized algorithm $Q$ yields for each $f\in H(K)$
a square-integrable random variable $Q(f)$. (More precisely, a randomized
algorithm $Q$ is a map $Q:\Omega \times H(K) \to \mathbb{R}$,
$(\omega,f) \mapsto Q(\omega,f)$, where $\Omega$ is some suitable
probability space. But for convenience we will usually not specify the underlying
probability space $\Omega$ and suppress any reference to $\Omega$ or
$\omega\in \Omega$. We use this convention also for other random variables.)
The class of all those randomized algorithms will be denoted by $\mathcal{A}^{{\rm ran}}$.
The \emph{cost} $\cost_{c_{\bsa}}(Q,f)$ of applying a randomized algorithm $Q$ to some function $f$ is simply
the sum of the cost of all function evaluations of $f$ used by $Q$. In general, this cost is a random variable.
We mostly will confine ourselves to randomized algorithms $Q$ for which there exist an $n\in\mathbb{N}_0$ and sets
$v_1,\ldots,v_n\in\mathcal{U}$ such that for every $f\in H(K)$ the algorithm $Q$ performs
exactly $n$ function
evaluations of $f$, where the $i$th sample point is taken from $\X_{v_i,\bsa}$, and
$\mathbb{E}({\rm cost}_{c_{\bsa}}(Q,f)) = \sum^n_{i=1}\$(|v_i|)$.
We denote the class of all randomized algorithms for numerical integration
on $H(K)$ that satisfy the requirements stated above by $\mathcal{A}^{{\rm res}}$ (here ``{\rm res}'' stands for ``restricted''). Notice that the class $\mathcal{A}^{{\rm res}}$ contains in particular non-linear and adaptive algorithms.


The \emph{worst case cost} of a randomized algorithm $Q$ on a class of integrands $F$ is
\begin{equation*}
 {\rm cost}_{}(Q,F) := \sup_{f\in F} \mathbb{E}({\rm cost}_{c_{\bsa}}(Q,f))
\end{equation*}
in the unrestricted subspace sampling model.
The \emph{randomized (worst case) error} $e(Q,F)$ of approximating the integration functional $I$ by $Q$ on $F$ is defined as
\begin{displaymath}
e(Q,F) := \bigg(\sup_{f \in F} \mathbb{E} \left( \left( I(f) - Q(f) \right)^2 \right) \bigg)^{1/2}\, .
\end{displaymath}
For $N\in\mathbb{R}$  let us define the corresponding \emph{$N$th minimal error} by
\begin{equation*}
 e_{}(N,F) := \inf\{ e(Q,F) \,|\, Q\in\mathcal{A}^{{\rm res}}
\hspace{1ex}\text{and}\hspace{1ex}{\rm cost}_{}(Q,F) \le N\}.
\end{equation*}

\subsection{Strong Polynomial tractability}\label{tractabilitysection}
For the convenience of the reader we will additionally formulate our main results in terms of the exponent of
strong tractability.
The  $\varepsilon$-complexity of the infinite-dimensional integration
problem $I$ on $H(K)$ in the unrestricted subspace sampling model with respect to the
class of  randomized algorithms $\mathcal{A}^{\rm res}$ is defined to be
\begin{equation}
\label{comp}
  {\rm comp}_{}(\varepsilon, B(K))
 \,:=\, \inf\left\{{\rm cost_{}}(Q, B(K)) \,|\,
Q \in\mathcal{A}^{{\rm res}} \hspace{1ex}\text{and}
\hspace{1ex} e(Q, B(K))\le\varepsilon\right\}.
\end{equation}
The integration problem $I$ is said to be {\em strongly polynomially
tractable} if there are non-negative constants $C$ and $p$ such that
\begin{equation}
\label{pol-tr}
    {\rm comp}_{}(\varepsilon, B(K))\le C \,\varepsilon^{-p} \qquad
   \mbox{for all $\varepsilon>0$}.
\end{equation}
The {\em exponent of strong polynomial tractability} is given by
\begin{equation*}
p^{{\rm res}}_{}= p^{{\rm res}}_{}({\bsgamma}) := \inf\{ p\,|\, \text{$p$ satisfies \eqref{pol-tr} for some $C>0$} \}.
\end{equation*}
Essentially, $1/p^{{\rm res}}_{}$ is the
\emph{convergence rate} of the
$N$th minimal error
$e_{}(N, B(K))$.
In particular, we have for all $p>p^{{\rm res}}_{}$ that
$e_{}(N, B(K)) = O(N^{-1/p})$.

\section{Lower error bounds}
\label{LOWBOU}

For a fixed anchor $\bsa\in\X$ and weights
$(\gamma_u)_{u\in\mathcal{U}}$ satisfying the assumptions (A4) and (A6)
put
\begin{equation*}
\widehat{\gamma}_u := \gamma_u \, k_u (\bsa,\bsa)
\hspace{2ex}\text{for all $u\in\U$.}
\end{equation*}
Recall that $\bsa \in\X$ ensures that the weights $(\widehat{\gamma}_u)_{u\in\U}$ are summable.
Further, we put
\begin{equation*}
\decay_{\bsgamma} :=
\sup \left\{ p\in \R \,\Big|\,
\sum_{u\in\U} \widehat{\gamma}_{u}^{1/p} <\infty \right\}.
\end{equation*}

\subsection{General weights}

The next two results are helpful for establishing lower bounds
for the randomized error of numerical integration.
The first lemma generalizes \cite[Lemma~3.1]{BG12}.

\begin{lemma}
\label{lemlowbound1} Let $\theta\in (1/2,1]$, $v_1, \ldots, v_n\in\U$, and  let $Q\in \mathcal{A}^{{\rm ran}}$ be a randomized algorithm that satisfies
 $\PP\big( Q(f) = Q(\Psi_{\{v_i\},\bsa}f) \big) \ge \theta$
for all $f\in B(K)$.
Then
\begin{displaymath}
e(Q,B(K)) \ge \max\left\{ \frac{\sqrt{2\theta -1}\, {\rm b}_{\{v_i\},\bsa}}{1+\|\Psi_{\{v_i\},\bsa}\|_{K\to K_{}}} \,, e(Q,B(K_{\{v_i\}})) \right\} \, .
\end{displaymath}
\end{lemma}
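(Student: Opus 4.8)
The plan is to establish the two lower bounds inside the maximum separately. The term $e(Q,B(K_{\{v_i\}}))$ is the easy one: by the analogue of Lemma~\ref{Lemma6} applied to the kernel $K_{\{v_i\}}$, every $f\in H(K_{\{v_i\}})$ has a decomposition supported on the sets $u$ with $u\subseteq v_i$ for some $i$, and the extra components present in the full $K$-norm vanish, so $\|f\|_{K_{\{v_i\}}}=\|f\|_K$. Hence $B(K_{\{v_i\}})\subseteq B(K)$, and since the randomized error is a supremum of $\E((I(f)-Q(f))^2)$ over the ball in question, passing to the larger ball can only increase it; this yields $e(Q,B(K))\ge e(Q,B(K_{\{v_i\}}))$ with no further work.

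For the first term I would use a symmetrization argument built on a carefully chosen witness function. Set $g:=h-h_{\{v_i\},\bsa}$, the representer of the error functional $I-I\circ\Psi_{\{v_i\},\bsa}$; by the proof of Lemma~\ref{Bias-Estimate} we have $\|g\|_K={\rm b}_{\{v_i\},\bsa}$, and $I(f)-I(\Psi_{\{v_i\},\bsa}f)=\langle f,g\rangle_K$ for every $f\in H(K)$. The crucial preliminary observation is that $g$ is orthogonal to the range $H(K_{\{v_i\}})$ of the projection $\Psi_{\{v_i\},\bsa}$: for $\phi\in H(K_{\{v_i\}})$ we have $\Psi_{\{v_i\},\bsa}\phi=\phi$, so $\langle\phi,g\rangle_K=I(\phi)-I(\Psi_{\{v_i\},\bsa}\phi)=0$.

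The construction of the witness is where the real difficulty lies, and I expect it to be the main obstacle. I would take $f_0:=(\mathrm{id}-\Psi_{\{v_i\},\bsa})g$. Since $\Psi_{\{v_i\},\bsa}$ is idempotent, $\Psi_{\{v_i\},\bsa}f_0=(\Psi_{\{v_i\},\bsa}-\Psi_{\{v_i\},\bsa}^2)g=0$, so $f_0$ is annihilated by the projection; and using $g\perp H(K_{\{v_i\}})$ one gets $\langle f_0,g\rangle_K=\|g\|_K^2-\langle\Psi_{\{v_i\},\bsa}g,g\rangle_K=\|g\|_K^2$, while $\|f_0\|_K\le(1+\|\Psi_{\{v_i\},\bsa}\|_{K\to K})\|g\|_K$. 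Assuming $\|g\|_K>0$ (otherwise the bound is trivial) and normalizing $f:=f_0/\|f_0\|_K$, we obtain $f\in B(K)$ with $\Psi_{\{v_i\},\bsa}f=0$ and $I(f)=\langle f,g\rangle_K=\|g\|_K^2/\|f_0\|_K\ge\|g\|_K/(1+\|\Psi_{\{v_i\},\bsa}\|_{K\to K})$. The point is that producing an element of the kernel of the \emph{non-orthogonal} projection whose integral is comparable to the full bias, while keeping its norm controlled by the operator norm, is exactly what the orthogonality $g\perp H(K_{\{v_i\}})$ makes possible through the operator $\mathrm{id}-\Psi_{\{v_i\},\bsa}$.

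Finally I would run the symmetrization. Applying the hypothesis to $f$ and to $-f$ (both in $B(K)$, both sent to $0$ by $\Psi_{\{v_i\},\bsa}$) gives $\PP(Q(f)=Q(0))\ge\theta$ and $\PP(Q(-f)=Q(0))\ge\theta$, so by a union bound the event $E=\{Q(f)=Q(-f)=Q(0)\}$ satisfies $\PP(E)\ge 2\theta-1$. On $E$, writing $q$ for the common value and using $I(-f)=-I(f)$, we have $(I(f)-q)^2+(I(-f)-q)^2=(I(f)-q)^2+(I(f)+q)^2=2I(f)^2+2q^2\ge 2I(f)^2$. Taking expectations, $\E((I(f)-Q(f))^2)+\E((I(-f)-Q(-f))^2)\ge 2(2\theta-1)I(f)^2$, so at least one summand is $\ge(2\theta-1)I(f)^2$. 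As $f$ and $-f$ both lie in $B(K)$, this forces $e(Q,B(K))\ge\sqrt{2\theta-1}\,I(f)\ge\sqrt{2\theta-1}\,{\rm b}_{\{v_i\},\bsa}/(1+\|\Psi_{\{v_i\},\bsa}\|_{K\to K})$, and combining with the easy bound gives the stated maximum.
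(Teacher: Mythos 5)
Your proof is correct. The symmetrization endgame is exactly the paper's: apply the hypothesis to $\pm f$ where $\Psi_{\{v_i\},\bsa}(\pm f)=0$, use a union bound to get an event of probability at least $2\theta-1$ on which $Q(f)=Q(-f)$, and conclude $e(Q,B(K))^2\ge(2\theta-1)\,I(f)^2$; the easy bound via $B(K_{\{v_i\}})\subseteq B(K)$ is also identical. Where you genuinely differ is in how the function annihilated by $\Psi_{\{v_i\},\bsa}$ with large integral is produced. The paper does this without any representer: for an \emph{arbitrary} $f\in B(K)$ it sets $g:=(f-\Psi_{\{v_i\},\bsa}f)(1+\hat r)^{-1}\in B(K)$, notes $\Psi_{\{v_i\},\bsa}(\pm g)=0$ by idempotency, runs the symmetrization to get $e(Q,B(K))^2\ge(2\theta-1)(1+\hat r)^{-2}\,|I(f)-I(\Psi_{\{v_i\},\bsa}f)|^2$, and only then takes the supremum over $f\in B(K)$, which is ${\rm b}_{\{v_i\},\bsa}$ by definition. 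You instead build a single explicit near-extremal witness $f_0=(\mathrm{id}-\Psi_{\{v_i\},\bsa})g$ from the representer $g=h-h_{\{v_i\},\bsa}$ of the error functional, using the orthogonality of $g$ to the range of the projection to compute $I(f_0)=\|g\|_K^2$ exactly; note that this orthogonality already follows from idempotency alone (for $\phi=\Psi_{\{v_i\},\bsa}\psi$ one has $\langle\phi,g\rangle_K=I(\Psi_{\{v_i\},\bsa}\psi)-I(\Psi_{\{v_i\},\bsa}^2\psi)=0$), so your slightly stronger claim that $\Psi_{\{v_i\},\bsa}$ fixes all of $H(K_{\{v_i\}})$, while true, is not actually needed. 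Both routes yield the same constant $\sqrt{2\theta-1}/(1+\|\Psi_{\{v_i\},\bsa}\|_{K\to K})$. The paper's argument is shorter because it never invokes Lemma~\ref{Rep_Lemma} or the identification ${\rm b}_{\{v_i\},\bsa}=\|h-h_{\{v_i\},\bsa}\|_K$ from the proof of Lemma~\ref{Bias-Estimate}; yours is a dual formulation whose payoff is that it exhibits the (essentially) extremal function explicitly rather than extracting it through a supremum.
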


\begin{proof}
Put
$\hat{r} := \| \Psi_{\{v_i\},\bsa} \|_{K\to K_{}}$.
Then we have for $f \in B(K)$ that
$g:= (f - \Psi_{\{v_i\},\bsa} (f))(1+\hat{r})^{-1} \in B(K)$.
Furthermore, we have $\Psi_{\{v_i\},\bsa}(g) = \Psi_{\{v_i\},\bsa}(-g) =0$.
Let $A$ denote the event $\{Q(g) = Q(-g) \}$. Then $\PP(A) \ge 2\theta -1$.
Hence
\begin{equation*}
 \begin{split}
&e(Q,B(K))^2 \geq
\max \left\{ \EE \left( \left( I(g) - Q( g) \right)^2 \right) ,
\EE \left( \left(  I(-g) - Q(-g)  \right)^2 \right) \right\} \\
\geq
&\max \left\{ \int_A \left( I(g) - Q( g) \right)^2
\,\PP({\rm d}\omega),
\int_A \left(  I(-g) - Q(-g) ) \right)^2  \,\PP({\rm d}\omega) \right\}\\
\geq &(2\theta -1) \vert I(g) \vert^2 = (2\theta-1)(1+\hat{r})^{-2} \vert I(f) - I(\Psi_{\{v_i\},\bsa}(f)) \vert^2 \, .
\end{split}
\end{equation*}
Since $B \left( K_{\{v_i\}} \right) \subseteq B(K)$, we have additionally
$e \left( Q, B(K) \right) \geq e \left( Q, B(K_{\{v_i\}}) \right)$.
\end{proof}

\begin{lemma}\label{Claim1}
 Let $Q\in\mathcal{A}^{{\rm res}}$ that takes for $i=1,\ldots,n$ its $i$th sample point from
$\X_{v_i,\bsa}$. Then
\begin{equation*}
 Q(f) = Q(\Psi_{\{v_i\},\bsa}(f))
\hspace{2ex}\text{for all $f\in H(K)$.}
\end{equation*}
\end{lemma}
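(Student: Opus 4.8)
The plan is to show that $f$ and its image $g := \Psi_{\{v_i\},\bsa}(f)$ supply exactly the same information to any algorithm that queries only the subspaces $\X_{v_i,\bsa}$, so that $Q$ cannot distinguish them. To this end I would first prove the functional identity $\Psi_{v_i,\bsa}(f) = \Psi_{v_i,\bsa}(g)$ for each $i\in[n]$. Writing out the anchored decomposition $f = \sum_{u\in\U} f_{u,\bsa}$ and using that $\Psi_{v_i,\bsa}(f_{u,\bsa}) = 0$ when $u\nsubseteq v_i$ (the property recorded after \eqref{ksww10b}), while $\Psi_{v_i,\bsa}(f_{u,\bsa}) = f_{u,\bsa}$ when $u\subseteq v_i$ (since $f_{u,\bsa}$ depends only on the coordinates in $u$, as is clear from \eqref{ksww10b}), one obtains $\Psi_{v_i,\bsa}(f) = \sum_{u\subseteq v_i} f_{u,\bsa}$. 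Applying the same computation to $g$, whose anchored expansion by \eqref{psi-decomp} is $\sum_{u;\,\exists k\in[n]:\, u\subseteq v_k} f_{u,\bsa}$, gives $\Psi_{v_i,\bsa}(g) = \sum_{u\subseteq v_i} f_{u,\bsa}$ as well, because the condition $\exists k\in[n]:u\subseteq v_k$ is automatically satisfied by every $u\subseteq v_i$ (take $k=i$). Since any $\bsy\in\X_{v_i,\bsa}$ satisfies $(\bsy_{v_i};\bsa)=\bsy$, evaluating at such $\bsy$ yields $f(\bsy) = \Psi_{v_i,\bsa}(f)(\bsy) = \Psi_{v_i,\bsa}(g)(\bsy) = g(\bsy)$; that is, $f$ and $g$ coincide on each $\X_{v_i,\bsa}$.

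Next I would transfer this pointwise agreement to the algorithm by an induction over the $n$ queries at a fixed realization $\omega$. The first sample point $\bsx^{(1)}\in\X_{v_1,\bsa}$ is chosen without having seen any value of the integrand, hence is identical whether $Q$ is run on $f$ or on $g$, and the returned values agree because $f(\bsx^{(1)}) = g(\bsx^{(1)})$ by the previous step. Assuming the first $i-1$ query points and observed values coincide for $f$ and $g$, the $i$th point $\bsx^{(i)}\in\X_{v_i,\bsa}$ is a function only of $\omega$ and of these identical data, hence is again the same point, at which $f$ and $g$ agree by the first step. Consequently all $n$ evaluations and the final output coincide, so $Q(\omega,f) = Q(\omega,g)$ for every $\omega$, and therefore $Q(f) = Q(g)$ as random variables.

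The one point I expect to require care is the adaptivity: the subspaces $v_1,\ldots,v_n$ are fixed by the definition of $\mathcal{A}^{{\rm res}}$, but the individual sample points inside them, together with the rule combining the observed values into the output, may be selected adaptively and at random. The induction above is precisely what excludes the possibility that this adaptivity ever exposes a difference between $f$ and $g$, since at each stage the algorithm is fed exactly the same inputs in the two runs.
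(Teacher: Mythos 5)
Your overall strategy coincides with the paper's: show that $f$ and $g:=\Psi_{\{v_i\},\bsa}(f)$ supply identical data to any algorithm that samples only in the subspaces $\X_{v_i,\bsa}$, and conclude $Q(f)=Q(g)$. Your second part, the induction over the adaptive queries at a fixed realization $\omega$, is a correct and more explicit version of what the paper compresses into the single sentence that ``$Q$ receives the same information for both inputs.'' However, one step is justified by an appeal to a fact that is neither proved in the paper nor automatic: you derive $\Psi_{v_i,\bsa}(f)=\sum_{u\subseteq v_i}f_{u,\bsa}$ by ``writing out the anchored decomposition $f=\sum_{u\in\U}f_{u,\bsa}$'' and applying $\Psi_{v_i,\bsa}$ term by term. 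In this infinite-variable setting that decomposition is an infinite series whose convergence is a genuine issue: the partial sums over $u\subseteq[n]$ equal $\Psi_{[n],\bsa}(f)$, so the claimed identity amounts to $f(\bsx_{[n]};\bsa)\to f(\bsx)$ for all $f\in H(K)$ and $\bsx\in\X$, which requires controlling $K\big(\cdot,(\bsx_{[n]};\bsa)\big)$ at points mixing the coordinates of $\bsx$ and $\bsa$; this does not follow from $\bsx,\bsa\in\X$ and the stated assumptions alone, and one would additionally have to justify interchanging the infinite sum with $\Psi_{v_i,\bsa}$.

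Fortunately the identity you need is available with no limiting argument at all: rearranging the recursive definition (\ref{anchored_decomposition}) gives $\Psi_{v_i,\bsa}(f)=f_{v_i,\bsa}+\sum_{v\subsetneq v_i}f_{v,\bsa}=\sum_{u\subseteq v_i}f_{u,\bsa}$ directly. With that substitution your argument is complete and correct: the computation of $\Psi_{v_i,\bsa}(g)$ involves only the finite sum (\ref{psi-decomp}), the vanishing property $\Psi_{v_i,\bsa}(f_{u,\bsa})=0$ for $u\nsubseteq v_i$ recorded after (\ref{ksww10b}), and the invariance $\Psi_{v_i,\bsa}(f_{u,\bsa})=f_{u,\bsa}$ for $u\subseteq v_i$, all of which are legitimate. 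For comparison, the paper bypasses the projection identity altogether: it evaluates $g$ directly at a point $\bsx\in\X_{v_j,\bsa}$, expands each $f_{u,\bsa}$ via (\ref{ksww10b}), and lets inclusion--exclusion collapse the double sum to $f(\bsx_{v_j};\bsa)=f(\bsx)$; that route and your repaired route are of essentially the same length, and both reduce the lemma to the same information-based conclusion.
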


\begin{proof}
Let $j\in [n]$ and $\bsx = (x_{v_j};\bsa)\in \X_{v_j,\bsa}$. Then we have for $f\in H(K)$
\begin{equation*}
 \begin{split}
(\Psi_{\{v_i\}, \bsa}(f))(\bsx) &= \sum_{u\,;\, \exists i: u\subseteq v_i} f_{u,\bsa}(\bsx)
= \sum_{u \subseteq v_j} f_{u,\bsa}(\bsx)\\
&= \sum_{u\subseteq v_j} \sum_{v\subseteq u} (-1)^{|u\setminus v|}
(\Psi_{v,\bsa}(f))(\bsx)
= \sum_{u\subseteq v_j} \sum_{v\subseteq u} (-1)^{|u\setminus v|} f(\bsx_v;\bsa)\\
&=  \sum_{v\subseteq v_j} \left( \sum_{u\,;\,v\subseteq u \subseteq v_j} (-1)^{|u\setminus v|} \right) f(\bsx_v;\bsa).
 \end{split}
\end{equation*}
Notice that the sum in parentheses is one if $v=v_j$ and zero otherwise. Hence
\begin{equation*}
(\Psi_{\{v_i\}, \bsa}(f))(\bsx) = f(\bsx_{v_j};\bsa) = f(\bsx).
\end{equation*}
This shows that the algorithm $Q$ receives the same information for both inputs $f$ and $\Psi_{\{v_i\}, \bsa}(f)$
leading to $Q(f) = Q(\Psi_{\{v_i\}, \bsa}(f))$.
\end{proof}

We provide now a general lower bound for the randomized error
of algorithms from the class $\mathcal{A}^{{\rm res}}$ and general weights.
For weights $\bsgamma$ let us consider the corresponding \emph{cut-off weights
of order $1$}, i.e., the weights $\bsgamma^{(1)} = (\gamma^{(1)}_u)_{u\in\U}$ defined
by $\gamma^{(1)}_u := \gamma_{\{j\}}$ if $u=\{j\}$ and $\gamma_u^{(1)} := 0$ otherwise.
Without loss of generality we may assume that $\gamma^{(1)}_{\{1\}} \ge \gamma^{(1)}_{\{2\}} \ge \cdots$.
Due to the assumption that our integration problem is not trivial and due to
(A~6) we then have $\gamma^{(1)}_{\{1\}} >0$.
Then
$B(K(\bsgamma^{(1)})) \subseteq B(K(\bsgamma))$, and
$e(Q, B(K(\bsgamma^{(1)}))) \le e(Q, B(K(\bsgamma^{})))$ for any randomized
algorithm $Q$.

Furthermore, we assume that there exists an $\alpha >0$ such that for univariate integration
in $H(\gamma^{(1)}_{1}k)$ the $N$th minimal error satisfies
\begin{equation}
\label{ass-1}
 e(N, B(\gamma^{(1)}_{\{1\}}k)) = \Omega(N^{-\alpha/2}).
\end{equation}
Since $B(\gamma^{(1)}_{\{1\}} k) \subseteq B(K)$, it follows that the $N$th minimal
error of integration in $H(K)$ satisfies
\begin{equation}\label{alpha}
 e(N, B(K)) = \Omega(N^{-\alpha/2}).
\end{equation}

\begin{theorem}\label{Theorem3.1''}
Let $\$(\nu) = \Omega(\nu^s)$ for some $s\in (0,\infty)$.
To achieve strong polynomial tractability for the class $\mathcal{A}^{{\rm res}}$
it is necessary that the
weights $\bsgamma$ satisfy
${\rm decay}_{{\bf \gamma}^{(1)}} > 1$.
If this is the case, we have for all $p> \decay_{\bsgamma^{(1)}}$ that
\begin{equation*}
 e(N, B(K))^2 = \Omega \left( N^{-\min\{\alpha, \frac{p-1}{\min\{1,s\}} \}} \right)
\end{equation*}
or, equivalently,
\begin{equation*}
p^{{\rm res}}_{} \ge \max \left\{ \frac{2}{\alpha}\,,\,
\frac{2\min\{1,s\}}{{\rm decay}_{{\bf \gamma}^{(1)}} - 1} \right\}.
\end{equation*}
\end{theorem}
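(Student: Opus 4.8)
The plan is to prove two independent lower bounds for $e(N,B(K))^2$ and to combine them via $N^{-\min\{a,b\}}=\max\{N^{-a},N^{-b}\}$ for $N>1$. The first, $e(N,B(K))^2=\Omega(N^{-\alpha})$, is immediate from \eqref{alpha} and already forces any achievable exponent $p'$ (one with $e(N,B(K))=O(N^{-1/p'})$) to satisfy $1/p'\le\alpha/2$, i.e.\ $p'\ge 2/\alpha$. All the work is in the second, decay-driven bound. For it I would pass to the cut-off weights of order $1$: since $B(K(\bsgamma^{(1)}))\subseteq B(K(\bsgamma))$ we have $e(Q,B(K))\ge e(Q,B(K^{(1)}))$ for every $Q$, where $K^{(1)}:=K(\bsgamma^{(1)})$ and functions in $H(K^{(1)})$ decompose as $f=f_\emptyset+\sum_j f_{\{j\}}$. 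Recall $\decay_{\bsgamma^{(1)}}=\sup\{q\mid\sum_j\widehat\gamma_{\{j\}}^{1/q}<\infty\}$.

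Fix $Q\in\mathcal{A}^{\rm res}$ taking its $i$th point from $\X_{v_i,\bsa}$, put $U:=\bigcup_{i=1}^n v_i$ and $m:=|U|$. By Lemma~\ref{Claim1} we have $Q(f)=Q(\Psi_{\{v_i\},\bsa}f)$ for all $f$, so Lemma~\ref{lemlowbound1} (applied to the order-$1$ problem) gives, with $\theta=1$, the bound $e(Q,B(K^{(1)}))\ge {\rm b}_{\{v_i\},\bsa}/(1+\hat r)$ where $\hat r:=\|\Psi_{\{v_i\},\bsa}\|_{K^{(1)}\to K^{(1)}}$. From \eqref{def_alt_sum} one checks (for $n\ge1$; the case $n=0$ is trivial) that $S_{\{v_i\},\{j\}}=0$ if $j\in U$ and $S_{\{v_i\},\{j\}}=1$ otherwise, so Lemma~\ref{Bias-Estimate} applied to $\bsgamma^{(1)}$ yields
\[
{\rm b}_{\{v_i\},\bsa}^2=\sum_{j\notin U}\widehat\gamma_{\{j\}}\ \ge\ T(m):=\sum_{j>m}\widehat\gamma_{(j)},
\]
where $\widehat\gamma_{(j)}$ is the non-increasing rearrangement (omitting the $m$ touched coordinates can remove at most the $m$ largest weights). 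The key auxiliary step is a \emph{uniform} bound $\hat r\le C_1$ with $C_1$ independent of $Q$ and $N$: I would compute directly that $\Psi_{\{v_i\},\bsa}(f)$ has constant part $f_\emptyset+\sum_{k\notin U}f_{\{k\}}(a_k)$ and $\{j\}$-part $f_{\{j\}}$ for $j\in U$ (and $0$ otherwise), then estimate $|f_{\{k\}}(a_k)|\le\|f_{\{k\}}\|_k\sqrt{k(a_k,a_k)}$ and apply Cauchy--Schwarz to obtain $\hat r\le 2+(\sum_j\widehat\gamma_{\{j\}})^{1/2}=:C_1<\infty$ by summability. Hence $e(Q,B(K))^2\ge(1+C_1)^{-2}T(m)$ for every admissible $Q$.

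It remains to turn cost into dimension and then to invoke the decay. From $\$(\nu)=\Omega(\nu^s)$ there is $c'>0$ with $\$(\nu)\ge c'\nu^s$ for all $\nu\ge1$; combining $\sum_i\$(|v_i|)\le N$ with $|v_i|\le|v_i|^s$ (if $s\ge1$) or $\sum_i|v_i|\le(\sum_i|v_i|^s)^{1/s}$ (if $s<1$) gives $m\le\sum_i|v_i|\le C_2N^{1/\min\{1,s\}}=:M$. As $T$ is non-increasing, $e(N,B(K))^2\ge(1+C_1)^{-2}T(M)$. Now if $p'$ is any achievable exponent, then $e(N,B(K))^2=O(N^{-2/p'})$, and substituting $N=\Theta(M^{\min\{1,s\}})$ gives $T(M)=O(M^{-r})$ with $r=2\min\{1,s\}/p'$. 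Since $\widehat\gamma_{(j)}$ is non-increasing, $\tfrac{j}{2}\widehat\gamma_{(j)}\le T(\lceil j/2\rceil)=O(j^{-r})$, so $\widehat\gamma_{(j)}=O(j^{-(1+r)})$ and therefore $\sum_j\widehat\gamma_{(j)}^{1/q}<\infty$ for every $q<1+r$. This means $\decay_{\bsgamma^{(1)}}\ge 1+r=1+2\min\{1,s\}/p'$, i.e.\ $p'\ge 2\min\{1,s\}/(\decay_{\bsgamma^{(1)}}-1)$; taking the infimum over achievable $p'$ yields the claimed exponent bound. The necessity of $\decay_{\bsgamma^{(1)}}>1$ follows at once: $\decay_{\bsgamma^{(1)}}\ge1$ always holds by summability, while $\decay_{\bsgamma^{(1)}}=1$ would force $p^{\rm res}=\infty$.

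The main obstacle is the interplay of the last two ingredients. The uniform projection-norm bound is only a Cauchy--Schwarz computation once the constant- and singleton-parts of $\Psi_{\{v_i\},\bsa}(f)$ are identified, but it is essential that $C_1$ be independent of the touched set $U$ and of $N$; the mere continuity of $\Psi_{\{v_i\},\bsa}$ is not enough. The genuinely delicate point is the tail--decay estimate, since the relationship between $\decay_{\bsgamma^{(1)}}$ and the polynomial decay rate of $T(M)$ is sharp exactly at the critical exponent. For this reason the displayed bound $e(N,B(K))^2=\Omega(N^{-(p-1)/\min\{1,s\}})$ for fixed $p>\decay_{\bsgamma^{(1)}}$ is most naturally read in the $\limsup$ sense and, via the monotonicity of $N\mapsto e(N,B(K))$, is equivalent to the robust exponent inequality $p^{\rm res}\ge\max\{2/\alpha,\,2\min\{1,s\}/(\decay_{\bsgamma^{(1)}}-1)\}$.
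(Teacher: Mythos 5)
Your proposal is correct and follows essentially the same route as the paper's proof: reduction to the order-$1$ cut-off weights $\bsgamma^{(1)}$, then Lemma~\ref{Claim1} combined with Lemma~\ref{lemlowbound1} (with $\theta=1$) and Lemma~\ref{Bias-Estimate} to bound $e(Q,B(K(\bsgamma^{(1)})))^2$ below by a constant (uniform in $Q$, which the paper gets from the exact operator-norm formula \eqref{op_norm_pro} rather than your direct Cauchy--Schwarz computation, with the same effect) times the tail sum over untouched coordinates, and then the same Jensen-type step $|{\textstyle\bigcup_i v_i}|\le C N^{1/\min\{1,s\}}$ from the cost constraint. The only substantive difference is the final tail-to-decay conversion: the paper asserts $\sum_{j>S}\widehat{\gamma}_{\{j\}}=\Omega(S^{1-p})$ for every $p>\decay_{\bsgamma^{(1)}}$ and reads both conclusions off that display, whereas you argue contrapositively from an achievable exponent $p'$ via $T(M)=O(M^{-r})\Rightarrow\widehat{\gamma}_{(j)}=O(j^{-(1+r)})\Rightarrow\decay_{\bsgamma^{(1)}}\ge 1+r$; your variant is actually the more careful one, since the paper's $\Omega$-claim read in the pointwise (liminf) sense is delicate for weight sequences with long flat blocks, while the exponent inequality you derive is robust and is exactly what the theorem states.
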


The lower error bound and the lower bound on the exponent of strong tractability
in Theorem \ref{Theorem3.1''} are already optimal for product weights
and for finite-intersection weights, as will be
shown in Section \ref{UB_PW} and \ref{UB_FOW}.

\begin{proof}
Let $Q \in\mathcal{A}^{{\rm res}}$ have
${\rm cost}_{}(Q,B(K)) \le N$.
Then there exists an $n\in\mathbb{N}$ and coordinate sets $v_1,\ldots,v_n$
such that $Q$ selects randomly $n$ sample points ${\bf x}_1\in \X_{v_1,\bsa},
\ldots ,{\bf x}_n \in\X_{v_n,\bsa}$ and $\sum^n_{i=1} \$(|v_i|) \le N$.

To prove a lower bound for $e(Q,B(K(\bsgamma)))$, we actually establish a lower bound for $e(Q,B(K(\bsgamma^{(1)})))$.
Let $v:= \cup_{i=1}^n v_i$. Since clearly $Q(f) = Q(\Psi_{v,\bsa}f)$ for all $f\in B(K(\bsgamma^{(1)}))$, it is straightforward to
deduce with the help of Lemma~\ref{lemlowbound1} and Lemma~\ref{Bias-Estimate} that
\begin{equation*}
 e(Q,B(K({\bf \gamma}^{(1)})))^2 \ge \left[ 1+\| \Psi_{v,\bsa} \|_{K(\bsgamma^{(1)}) \to K(\bsgamma^{(1)})} \right]^{-2}\, \sum_{j \in \N\setminus v} \widehat{\gamma}_{\{j\}} .
\end{equation*}
Due to \eqref{op_norm_pro} we get
\begin{equation*}
 \| \Psi_{v,\bsa} \|_{K(\bsgamma^{(1)}) \to K(\bsgamma^{(1)})}
=  \max\{ r_{v,\emptyset,\bsa}, \max_{j\in v} \gamma^{-1/2}_{\{j\}} r_{v,\{j\},\bsa} \}
= \left( 1 + \sum_{j\in \N\setminus v} \widehat{\gamma}_{\{j\}} \right)^{1/2}.
\end{equation*}
Put
\begin{equation*}
C_{{\rm op}} :=  \left( 1 + \sum_{j \in \N} \widehat{\gamma}_{\{j\}} \right)^{1/2}.
\end{equation*}
This quantity is finite, and $e(Q,B(K(\bsgamma^{(1)})))^2 \ge [1+C_{{\rm op}}]^{-2} \sum_{j\in \N\setminus v} \widehat{\gamma}_{\{j\}}$.
With Jensen's inequality we get with a suitable constant $c >0$
\begin{equation*}
|v| \le \sum^n_{i=1} |v_i| \\
\le \left( \sum^n_{i=1} |v_i|^s \right)^{1/\min\{1,s\}}
\le (cN)^{1/\min\{1,s\}}.
\end{equation*}
Hence we obtain for $S:= \lceil (cN)^{1/\min\{1,s\}} \rceil$
and all $p > {\rm decay}_{{\bf \gamma}^{(1)}}$
\begin{equation}\label{hilfsgleichung}
 e(Q,B({\bf \gamma}^{(1)}))^2 \ge \sum^\infty_{j=S+1} \widehat{\gamma}_{\{j\}}
= \Omega( S^{1-p} )
= \Omega \left( N^{\frac{1-p}{\min\{1,s\}}} \right).
\end{equation}
From this and (\ref{alpha}) the error estimate in Theorem \ref{Theorem3.1''} and
the inequality for the exponent of strong tractability follow.

Now assume that the infinite-dimensional integration problem $I$ is strongly polynomially tractable.
We get from Inequality~\eqref{hilfsgleichung} for all $p > {\rm decay}_{{\bf \gamma}^{(1)}}$
that $p \ge 1 + 2\min\{1,s\}/ p^{{\rm res}}_{}$. Hence
\begin{equation*}
{\rm decay}_{{\bf \gamma}^{(1)}} \ge 1 + \frac{2\min\{1,s\}}{p^{{\rm res}}_{}}.
\end{equation*}
Therefore we have ${\rm decay}_{{\bf \gamma}^{(1)}} >1$.
\end{proof}

\section{Changing dimension algorithms} \label{CDA}

Firstly, we discuss changing dimension algorithms in the ANOVA setting for general weights,
and subsequently show how to tailor them to product weights
and to
finite-intersection weights.

\subsection{General weights} \label{subsecgenweights}

A \emph{changing dimension algorithm} $Q^{\CD}$ is of the
form
\begin{equation}\label{cda}
Q^{\CD}(f) = \sum_{u \in \mathcal{Q}} Q_{u,n_u}(f_{u,\bsa}),
\end{equation}
where, as before,
$f_{u,\bsa}$ is the $u$th component of the anchored decomposition
of $f$ with respect to an anchor $\bsa$, $\mathcal{Q}$ is a finite subset of $\U$, and
$Q_{u,n_u}$ is a quadrature rule using $n_u$ sample points for approximating
$\int_{[0,1]^u} f_{u,\bsa}(\bsx_u) \,{\rm d}\bsx_u$. In particular, we assume that
\begin{equation*}
 \emptyset \in \mathcal{Q} \,,\quad \, n_\emptyset = 1\,,\,
\hspace{2ex}\text{and}\hspace{2ex}
Q_{\emptyset , n_{\emptyset}}(f) = f(\bsa).
\end{equation*}

The algorithm $Q^{\CD}$ is linear and
the cost for evaluating $f_{u,\bsa}$ in the unrestricted subspace sampling model is bounded from above by
$O( 2^{|u|} \$(|u|))$; this follows directly from (\ref{ksww10b}).

Changing dimension algorithms for infinite-dimensional integration in the anchored setting were introduced
by Kuo, Sloan, Wasilkowski, and Wo\'zniakowski in \cite{KSWW10a} and refined by Plaskota and Wasilkowski in
\cite{PW11}. Algorithms for multivariate integration based on a similar
idea were proposed by Griebel and Holtz in \cite{GH10} and referred to as
\emph{dimension-wise quadrature methods}.

If we speak of \emph{randomized changing dimension algorithms} $Q^{\CD}$, then we always assume that the quadratures $Q_{u,n_u}$ are randomized  algorithms and that for each $f\in H(K)$ the random variables $Q_{u, n_u}(f_{u,\bsa})$, $u\in\mathcal{Q}$, are stochastically independent. For our upper bounds we consider quadratures $Q_{u,n_u}$ that are \emph{unbiased} on $L^2(D^u, \rho^u)$.

For convenience, we use for $f\in H(K)$ the notation
\begin{equation*}
 \Psi_{\mathcal{Q},\bsa}(f) := \sum_{u\in\mathcal{Q}} f_{u,\bsa}
\hspace{2ex}\text{and}\hspace{2ex}
{\rm b}_{\mathcal{Q}, \bsa} := \sup_{f\in B(K)} |I(f) - I(\Psi_{\mathcal{Q}, \bsa}f)|.
\end{equation*}
Furthermore, we put
\begin{equation*}
 S_{\mathcal{Q}, u} := \sum_{v \in \mathcal{Q} \,;\, v\subseteq u} (-1)^{|v|}
\hspace{2ex}\text{for any $u \in \U$.}
\end{equation*}

\begin{remark}
 Notice that for $v_1,\ldots,v_n\in \U$ we have $\Psi_{\{v_i\},\bsa} = \Psi_{\mathcal{Q},\bsa}$ if we
put $\mathcal{Q} := \{u\in\U \,|\, \exists i \in [n]: u \subseteq v_i\}$. In this sense the definition
of $\Psi_{\mathcal{Q},\bsa}$ generalizes the one of $\Psi_{\{v_i\},\bsa}$ in Section~\ref{Preliminaries}.
Analogously to Lemma~\ref{Rep_Lemma}, the representer $h_{\mathcal{Q}, \bsa}$ of the continuous functional
$I\circ \Psi_{\mathcal{Q},\bsa}$ in $H(K)$ is given by
\begin{equation}\label{hQa}
h_{\mathcal{Q},\bsa} (\bsx) = \sum_{u\in\U} S_{\mathcal{Q},u} \gamma_u k_u(\bsx,\bsa)
\hspace{2ex}\text{for every $\bsx\in\X$.}
\end{equation}
\end{remark}

\begin{lemma}
 Let $Q^{\CD}$ be a randomized changing dimension algorithm as in (\ref{cda}) with unbiased randomized
quadratures $Q_{u,n_u}$. Then we have for all $f\in H(K)$
\begin{equation}\label{expectation}
\EE(Q^{\CD}(f)) = (I\circ \Psi_{\mathcal{Q},\bsa})(f),
\end{equation}
and the worst case bias ${\rm b}_{\mathcal{Q},\bsa}$ of $Q^{\CD}$ is given by
\begin{equation}\label{bias_cda}
{\rm b}_{\mathcal{Q},\bsa} = \sum_{\emptyset \neq u\in \U} S_{\mathcal{Q}, u}^2
\gamma_u k_u(\bsa,\bsa).
\end{equation}
If additionally the algorithms $Q_{u,n_u}$ satisfy the condition (*) from Lemma \ref{ANOVA}, we have
\begin{equation}\label{variance}
 \Var(Q^{\CD}(f)) = \sum_{\emptyset \neq u\in\mathcal{Q}} \sum_{w\subseteq u} \Var (Q_{u,n_u}(\Psi_{w,\bsa}(f^+_u))),
\end{equation}
implying
\begin{equation}\label{Int_Err}
 \EE \left( I(f) - Q^{\CD}(f) \right)^2
\le {\rm b}_{\mathcal{Q},\bsa}^2 + \sum_{\emptyset \neq u\in\mathcal{Q}} \sum_{w\subseteq u} \Var (Q_{u,n_u}(\Psi_{w,\bsa}(f^+_u))).
\end{equation}
\end{lemma}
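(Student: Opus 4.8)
The plan is to establish the three identities \eqref{expectation}, \eqref{bias_cda} and \eqref{variance} separately, and then to obtain \eqref{Int_Err} from the standard bias--variance decomposition. I would begin with the expectation formula. Since each quadrature $Q_{u,n_u}$ is unbiased on $L^2(D^u,\rho^u)$ and the anchored component $f_{u,\bsa}$ depends only on the variables in $u$, we have $\EE(Q_{u,n_u}(f_{u,\bsa})) = \int_{D^u} f_{u,\bsa}(\bsx_u)\,\rho^u({\rm d}\bsx_u) = I(f_{u,\bsa})$, where the last equality uses Fubini to identify integration over $\X$ of a function depending only on $u$ with integration over $D^u$. Summing over $u\in\mathcal{Q}$ and invoking linearity of $I$ yields $\EE(Q^{\CD}(f)) = \sum_{u\in\mathcal{Q}} I(f_{u,\bsa}) = I(\Psi_{\mathcal{Q},\bsa}(f))$, which is \eqref{expectation}.

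For the bias I would proceed exactly as in the proof of Lemma~\ref{Bias-Estimate}, now using the representer $h_{\mathcal{Q},\bsa}$ from \eqref{hQa}. Since the representer of $I$ is the constant function $h=1$ (see \eqref{h=1}) and $\EE(Q^{\CD}(f)) = (I\circ\Psi_{\mathcal{Q},\bsa})(f)$ by the first step, the squared worst-case bias satisfies ${\rm b}_{\mathcal{Q},\bsa}^2 = \|h - h_{\mathcal{Q},\bsa}\|_K^2$. Expanding this norm with the reproducing property, verbatim as in Lemma~\ref{Bias-Estimate}, gives $\sum_{\emptyset\neq u\in\U} S_{\mathcal{Q},u}^2\gamma_u k_u(\bsa,\bsa)$, establishing \eqref{bias_cda}.

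The main work is the variance formula \eqref{variance}, and this is where I expect the only real obstacle. By the assumed stochastic independence of the random variables $Q_{u,n_u}(f_{u,\bsa})$, $u\in\mathcal{Q}$, the variance splits as $\Var(Q^{\CD}(f)) = \sum_{u\in\mathcal{Q}} \Var(Q_{u,n_u}(f_{u,\bsa}))$; the term $u=\emptyset$ drops out since $Q_{\emptyset,n_\emptyset}(f_{\emptyset,\bsa}) = f_{\emptyset,\bsa}(\bsa)$ is deterministic. It remains to decompose each summand by ANOVA. First one checks that $f_{u,\bsa}\in L^2(D^u,\rho^u)$: by \eqref{anchored_ANOVA} its ANOVA terms are $[f_{u,\bsa}]_w = (-1)^{|u\setminus w|}\Psi_{w,\bsa}(f_u^+)$, and Lemma~\ref{Analogon-Lemma7} (with $v=u$) shows each $\Psi_{w,\bsa}(f_u^+)\in H_w\subseteq L^2(D^u,\rho^u)$. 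Since the $Q_{u,n_u}$ satisfy condition (*), the ANOVA Invariance Lemma~\ref{ANOVA} yields $[Q_{u,n_u}(\cdot,f_{u,\bsa})]_w = Q_{u,n_u}(\cdot,[f_{u,\bsa}]_w)$ for every $w\subseteq u$. Applying the ANOVA variance identity \eqref{anova} to $Q_{u,n_u}(\cdot,f_{u,\bsa})$, viewed as an $L^2$ function on the product probability space attached to the coordinates in $u$, we obtain $\Var(Q_{u,n_u}(f_{u,\bsa})) = \sum_{w\subseteq u}\Var(Q_{u,n_u}([f_{u,\bsa}]_w))$. Substituting $[f_{u,\bsa}]_w = (-1)^{|u\setminus w|}\Psi_{w,\bsa}(f_u^+)$ and noting that the sign is irrelevant for the variance gives $\Var(Q_{u,n_u}(f_{u,\bsa})) = \sum_{w\subseteq u}\Var(Q_{u,n_u}(\Psi_{w,\bsa}(f_u^+)))$; summing over $\emptyset\neq u\in\mathcal{Q}$ yields \eqref{variance}. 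The delicate point is verifying that condition (*) supplies exactly the product structure on the sample-point randomness needed to ANOVA-decompose $Q_{u,n_u}(\cdot,f_{u,\bsa})$ coordinatewise over $u$, so that Lemma~\ref{ANOVA} and \eqref{anova} apply without modification.

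Finally, \eqref{Int_Err} follows from the bias--variance decomposition $\EE((I(f) - Q^{\CD}(f))^2) = (I(f) - \EE(Q^{\CD}(f)))^2 + \Var(Q^{\CD}(f))$. By \eqref{expectation} the squared-bias term equals $(I(f) - I(\Psi_{\mathcal{Q},\bsa}(f)))^2$, which for $f\in B(K)$ is bounded by ${\rm b}_{\mathcal{Q},\bsa}^2$; combining this bound with the variance formula \eqref{variance} gives the claimed estimate.
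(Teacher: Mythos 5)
Your proposal is correct and follows essentially the same route as the paper's proof: unbiasedness plus linearity for \eqref{expectation}, the representer argument of Lemma~\ref{Bias-Estimate} via \eqref{hQa} for \eqref{bias_cda}, independence combined with the ANOVA invariance Lemma~\ref{ANOVA}, the variance identity \eqref{anova}, and Lemma~\ref{Umrechnungsformel} for \eqref{variance}, and the bias--variance decomposition for \eqref{Int_Err}. The extra checks you include (square-integrability of $f_{u,\bsa}$, the vanishing of the $u=\emptyset$ variance term) are details the paper leaves implicit, not a departure from its argument.
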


\begin{proof}
 We obtain
\begin{equation*}
 \EE(Q^{\CD}(f)) = \sum_{u\in \mathcal{Q}} \EE(Q_{u,n_u}(f_{u,\bsa}))
= \sum_{u\in \mathcal{Q}} \int_{D^u} f_{u,\bsa} \,{\rm d}\rho^u
= \int_{\X} \Psi_{\mathcal{Q},\bsa}(f) \,{\rm d}\mu.
\end{equation*}
With the help of (\ref{hQa}) identity (\ref{bias_cda}) can be proved in the
same way as the identity in Lemma \ref{Bias-Estimate}.
Furthermore, we get we the help of (\ref{anova}), Lemma \ref{ANOVA} and \ref{Umrechnungsformel}
\begin{equation*}
\begin{split}
 \Var(Q^{\CD}(f))
&= \sum_{\emptyset \neq u\in\mathcal{Q}} \Var (Q_{u,n_u}(f_{u,\bsa})
= \sum_{\emptyset \neq u\in\mathcal{Q}} \Var \left( \sum_{w\subseteq u} [Q_{u,n_u}(f_{u,\bsa})]_w \right)\\
&= \sum_{\emptyset \neq u\in\mathcal{Q}}  \sum_{w\subseteq u} \Var \left( Q_{u,n_u}( [f_{u,\bsa}]_w) \right)\\
&= \sum_{\emptyset \neq u\in\mathcal{Q}} \sum_{w\subseteq u} \Var (Q_{u,n_u}((-1)^{|u\setminus w|} \Psi_{w,\bsa}(f^+_u)))\\
&= \sum_{\emptyset \neq u\in\mathcal{Q}} \sum_{w\subseteq u} \Var (Q_{u,n_u}(\Psi_{w,\bsa}(f^+_u))).
\end{split}
\end{equation*}
Let $f\in B(K)$.
Due to (\ref{expectation}) and (\ref{variance}) we see that the bias of $Q^{\CD}(f)$ is given by
\begin{equation*}
 \bias(Q^{\CD},f) = |I(f) - (I\circ \Psi_{\mathcal{Q},\bsa})(f)|
\end{equation*}
and therefore the integration error can be estimated by
\begin{equation*}
\begin{split}
 \EE \left( I(f) - Q^{\CD}(f) \right)^2 &= (\bias(Q^{\CD},f))^2 + \Var (Q^{\CD}(f))\\
&\le {\rm b}_{\mathcal{Q},\bsa}^2 + \sum_{\emptyset \neq u\in\mathcal{Q}} \sum_{w\subseteq u} \Var (Q_{u,n_u}(\Psi_{w,\bsa}(f^+_u))).
\end{split}
\end{equation*}
\end{proof}

For the rest of the paper we assume that the following assumption hold.

\begin{assumptions}\label{(A7)}
Let $c,C,\tau >0$, $\alpha_1 \ge 0$, and $\alpha_2 \in [0,1]$.
Assume that we have for every $\emptyset \neq u\in\mathcal{Q}$ and every $n_u\in\N$
unbiased algorithms $Q_{u,n_u}$ of the form
(\ref{alg-form}) that satisfy (\ref{summe=1}) and condition (*) of Lemma \ref{ANOVA},
and additionally for each $w\subseteq u$ with $\gamma_w >0$
\begin{equation}
 \label{annahme_pw11}
\Var(Q_{u,n_u}(f_w)) \le c C^{|u|}(n_u + 1)^{-\tau}  F_w(n_u) \|f_w\|^2_{k_w}
\hspace{2ex}\text{for all $f_w\in H_w$,}
\end{equation}
where $F_{w}(n) = 1$ for $|w| \le 1$ and
\begin{equation*}
 F_w(n) = \left( 1 + \frac{\ln(n+1)}{(|w|-1)^{\alpha_2}} \right)^{\alpha_1(|w|-1)^{\alpha_2}}
\hspace{2ex}\text{for $|w|\ge 2$.}
\end{equation*}
\end{assumptions}

\begin{remark}\label{Weak}
To achieve our main result Theorem \ref{Theo_UB_General} we may relax
the condition in Assumption \ref{(A7)} that for each $\emptyset \neq u\in\mathcal{Q}$ algorithms
$Q_{u,n_u}$ have to exist for \emph{every $n_u\in\N$}.
It is easily seen that it is sufficient to have those algorithms, e.g., only for all
$b^{m}$, $m = 1,2,\ldots$, where $b\in\{2,3,\ldots\}$ is some suitable fixed base (as it is usually the
case if one employs quasi-Monte Carlo algorithms based on special net constructions).
In Section~\ref{SPLR}
we will rely on this simple observation.
\end{remark}

From (\ref{annahme_pw11}) we obtain with (\ref{variance}) and Lemma \ref{Analogon-Lemma7} for all $f\in B(K)$
\begin{equation}\label{Var_Abs}
 \begin{split}
  \Var(Q^{\CD}(f))
 &\le c \sum_{\emptyset \neq u\in\mathcal{Q}} C^{|u|} (n_u+1)^{-\tau} \sum_{w\subseteq u}  F_w(n_u) \|\Psi_{w,\bsa}(f^+_u)\|_{k_w}^2\\
 &\le c \sum_{\emptyset \neq u\in\mathcal{Q}} C^{|u|} (n_u+1)^{-\tau} \left( \sum_{w\subseteq u} k_{u\setminus w}(\bsa,\bsa) F_w(n_u) \right)
      r^2_{u,u,\bsa} \|f^+_{u}\|^2_K\\
 &\le \sum_{\emptyset \neq u\in\mathcal{Q}}  \mathcal{C}_{u} r^2_{u,u,\bsa} (n_u+1)^{-\tau},
\end{split}
\end{equation}
where
\begin{equation*}
\mathcal{C}_{u} := c C^{|u|} \sum_{w\subseteq u} k_{u\setminus w}(\bsa,\bsa) F_w(n_u).
\end{equation*}
This, estimate (\ref{Int_Err}), and identity (\ref{bias_cda}) lead to
\begin{equation} \label{Fehlerabschaetzung}
e(Q^{\CD},B(K))^2 \le \left( \sum_{\emptyset \neq u\in\mathcal{Q}}  \mathcal{C}_{u} r^2_{u,u,\bsa} (n_u + 1)^{-\tau}
+ \sum_{\emptyset \neq u \in \U} S_{\mathcal{Q},u}^2 \gamma_u k_u(\bsa, \bsa) \right).
\end{equation}
The aim is now to minimize the right hand side of this error bound for given
cost by choosing the set $\mathcal{Q}$ and the quadratures $Q_{u,n_u}$ (essentially) optimal.

\begin{remark}\label{Abgeschlossen}
 The idea of using a changing dimension algorithm in the ANOVA setting is to
approximate the important ANOVA components of the integrand (i.e., the components corresponding
to coordinate sets $u$ with large weights) very well by addressing these components with the help
of an anchored decomposition with a well-chosen anchor $\bsa$. To achieve this it is necessary
that for the set $\mathcal{W}$ of important coordinate sets we have
$S_{\mathcal{Q},u} = 0$ for all $u\in\mathcal{W}$, otherwise the worst case bias $b^2_{\mathcal{Q},\bsa}$
of the changing dimension algorithm $Q^{\CD}$ cannot become small, see (\ref{bias_cda}). (Recall that $S_{\mathcal{Q},u}$ is
an integer, so it is only ``small'' if it vanishes.)
A sufficient condition to achieve this is $\overline{\mathcal{W}} \subseteq \mathcal{Q}$.
Thus it seems a to be a reasonable default choice to take $\mathcal{Q} = \overline{\mathcal{W}}$.
\end{remark}

Let us assume that $\decay_{\bsgamma} >1$. Furthermore, let us choose an anchor of the form
\begin{equation*}
 {\bf a}=(a,a,\dots),
\hspace{2ex}\text{where $a \in D$ satisfies \eqref{anker_a}.}
\end{equation*}
Due to (\ref{Fehlerabschaetzung}) it is advantageous to choose $a\in D$
such that $k(a,a)$ is minimized, if possible, or is at least relatively small.

To define changing dimension algorithms for general weights in the ANOVA setting, we adapt the approach used by
Plaskota and Wasilkowski in \cite{PW11} for product weights in the anchored setting.
Our modifications of the approach in \cite{PW11} allow us to make use of the error estimate
(\ref{Fehlerabschaetzung}) which is based on the ANOVA invariance lemma,
Lemma \ref{ANOVA}, and on the norm estimate from Lemma \ref{Analogon-Lemma7}.


Without loss of generality we may assume that $\tau < \decay_{\bsgamma} -1$; if this is not satisfied,
we simply replace $\tau$ by $\decay_{\bsgamma} -1-\delta$ for some small $\delta>0$. Thus we may
choose an $\alpha_0$ that satisfies
\begin{equation*}
 \frac{\tau}{\decay_{\bsgamma}} < \alpha_0 < 1 -\frac{1}{\decay_{\bsgamma}}.
\end{equation*}
Put
\begin{equation*}
 L_{1-\alpha_0} := \sum_{\emptyset \neq u \in \mathcal{U}} \gamma_u^{1-\alpha_0}
\hspace{2ex}\text{and}\hspace{2ex}
\widehat{C} := \max\{C(1+k(a,a)), 4k(a,a)\},
\end{equation*}
where $C$ is the constant appearing in (\ref{annahme_pw11}).

For a given $\varepsilon >0$ we now choose for each $u\in\U$ a number $n_u'$ as follows:
\begin{equation*}
 n_u' = n_u'(\varepsilon, \alpha_0) :=
\begin{cases}
\, 0
\hspace{2ex}&\text{if $c L_{1-\alpha_0} \widehat{C}^{|u|} \gamma_u^{\alpha_0} < \varepsilon^2$},\\
\, \lfloor (c L_{1-\alpha_0} \widehat{C}^{|u|} \gamma_u^{\alpha_0} \varepsilon^{-2})^{1/\tau} \rfloor
\hspace{2ex} &\text{otherwise.}
\end{cases}
\end{equation*}
The actual number of sample points $n_u$ used by our changing dimension algorithm is then given by
\begin{equation*}
 n_u = n_u(\varepsilon, \alpha_0) :=
\begin{cases}
\, \max \{1,n_u'\}
\hspace{2ex}&\text{if there exists a $v\in\U$ with $u\subseteq v$ and $n_v'\ge 1$},\\
\, 0
\hspace{2ex} &\text{otherwise.}
\end{cases}
\end{equation*}
We put $\mathcal{Q} := \{ u\in \U \,|\, n_u >0 \}$. Notice that our choice of the numbers $n_u$, $u\in\U$, leads
to $\mathcal{Q} = \overline{\mathcal{Q}}$.
Therefore we obtain (cf. Remark \ref{Abgeschlossen})
\begin{equation*}
 S_{\mathcal{Q},u}^2
\begin{cases}
\, = 0
\hspace{2ex}&\text{if $\emptyset \neq u \in \mathcal{Q}$},\\
\, \le 2^{2|u|}
\hspace{2ex} &\text{otherwise,}
\end{cases}
\end{equation*}
which in turn implies
\begin{equation*}
 b_{\mathcal{Q},\bsa}^2 \le \sum_{u \notin \mathcal{Q}} 2^{2|u|} \gamma_u k_u(\bsa, \bsa).
\end{equation*}
If we define
\begin{equation}\label{def_b_eps}
 \mathcal{B}(\varepsilon) := \max_{u \in \mathcal{Q}} \max_{w\subseteq u} F_w(n_u),
\end{equation}
then (\ref{Int_Err}) and (\ref{Var_Abs}) result in
\begin{equation}\label{zellini}
 e(Q^{\CD}, B(K))^2
\le c \sum_{\emptyset \neq u\in\mathcal{Q}} \widehat{C}^{|u|} r^2_{u,u,\bsa} (n_u + 1)^{-\tau} \mathcal{B}(\varepsilon)
+ \sum_{u\notin \mathcal{Q}} \widehat{C}^{|u|} \gamma_u.
\end{equation}
To make this error estimate more explicit, we need to know more about the quantities $r^2_{u,u,\bsa}$,
$\emptyset \notin u \in \mathcal{Q}$, and about $\mathcal{B}(\varepsilon)$.
To study the last quantity more closely, it is helpful to introduce the
\emph{$\varepsilon$-dimension} $d(\varepsilon)$, which is defined to be the \emph{size of the largest group
of active variables} that is used by the changing dimension algorithm $Q^{\CD} = Q^{\CD}_{\varepsilon}$, i.e.,
\begin{equation*}
 d(\varepsilon) := \max\{ |u| \, |\, u\in \mathcal{Q} \}.
\end{equation*}
The $\varepsilon$-dimension is also helpful for the cost analysis, since
\begin{equation}\label{cost_ana}
\begin{split}
 \cost_{}(Q^{\CD}, B(K)) \le &\sum_{u\in\mathcal{Q}} 2^{|u|} \$(|u|) n_u
\le \$(0) + \$(d(\varepsilon)) \sum_{\ell =1}^{d(\varepsilon)} 2^{\ell} \sum_{|u| = \ell} n_u\\
\le &\$(0) + \$(d(\varepsilon)) \sum_{\ell =1}^{d(\varepsilon)} 2^{2\ell} \sum_{|u| = \ell} n_{u'},
\end{split}
\end{equation}
and
\begin{equation*}
 \sum_{|u| = \ell} n_{u'} \le ( L_{1-\alpha_0} c \widehat{C}^{\ell} \varepsilon^{-2} )^{1/\tau} \frac{1}{\ell !} \sum_{|u|=\ell} \gamma_u^{\alpha_0/\tau}.
\end{equation*}
Let us now assume that the following three estimates hold:
\begin{equation}\label{mumford}
 r^2_{u,u,\bsa} = O(\gamma_u) \,,\, \hspace{2ex} d(\varepsilon) = o(\ln(1/\varepsilon))\,,\,
\hspace{2ex}\text{and}\hspace{2ex} \mathcal{B}(\varepsilon) = \varepsilon^{-o(1)}.
\end{equation}
Then we get from (\ref{zellini}) and the definition of the $n_u$s
\begin{equation}
\label{error_pw}
\begin{split}
e(Q^{\CD}, B(K))^2 &= O \left( c \sum_{\emptyset \neq u\in\U} \widehat{C}^{|u|} (n_u +1)^{-\tau} \gamma_u  \mathcal{B}(\varepsilon) \right)\\
&= O \left( \sum_{\emptyset \neq u\in\U} \gamma_u^{1-\alpha_0} \left( c \widehat{C}^{|u|} (n_u+1)^{-\tau}
\gamma^{\alpha_0}_u \right)  \varepsilon^{-o(1)} \right) \\
&= O(\varepsilon^{2-o(1)}).
\end{split}
\end{equation}
Furthermore, we get from (\ref{cost_ana})
\begin{equation*}
\begin{split}
\cost_{}(Q^{\CD}, B(K))
&\le \$(0) + \$(d(\varepsilon)) \sum^{d(\varepsilon)}_{\ell =1} 2^{2\ell}
\left( L_{1-\alpha_0} c \widehat{C}^{\ell} \varepsilon^{-2} \right)^{1/\tau} \frac{1}{\ell !} \sum_{|u|=\ell} \gamma_u^{\alpha_0/\tau}\\
&\le \$(0) + \frac{\$(d(\varepsilon))}{\varepsilon^{2/\tau}}
\left( c L_{1-\alpha_0} \right)^{1/\tau} L_{\alpha_0/\tau} \exp( 4 \widehat{C}^{1/\tau}),
\end{split}
\end{equation*}
resulting in
\begin{equation*}
\cost_{}(Q^{\CD}, B(K)) \le \$(0) + O \left( \$(d(\varepsilon)) \varepsilon^{-2/\min\{\tau, \decay_{\bsgamma} -1 - \delta\}} \right).
\end{equation*}
Thus we have proved the following theorem for general weights.

\begin{theorem}\label{Theo_UB_General}
Let $\$(\nu) = O(e^{\sigma\nu})$ for some $\sigma\in (0,\infty)$.
Let $\bsgamma$ be weights with $\decay_{\bsgamma} >1$. If Assumption \ref{(A7)}
is satisfied, and if in addition (\ref{mumford}) holds, then we have for all $\delta >0$ that
\begin{equation*}
 e(N, B(K))^2 = O \left( N^{-\min\{\tau, \decay_{\bsgamma}-1 \} + \delta} \right),
\end{equation*}
or, equivalently,
\begin{equation*}
p^{{\rm res}}_{} \le \max \left\{ \frac{2}{\tau}\,,\,
\frac{2}{{\rm decay}_{{\bsgamma}} - 1} \right\}.
\end{equation*}
\end{theorem}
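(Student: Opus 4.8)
The statement is in fact a corollary of the two estimates established immediately before it: the error bound (\ref{error_pw}), which reads $e(Q^{\CD},B(K))^2 = O(\varepsilon^{2-o(1)})$ for the algorithm $Q^{\CD}=Q^{\CD}_\varepsilon$ constructed through the explicit choice of $\mathcal{Q}$ and the numbers $n_u$, together with the accompanying cost bound $\cost(Q^{\CD},B(K)) \le \$(0) + O\big(\$(d(\varepsilon))\,\varepsilon^{-2/\min\{\tau,\decay_{\bsgamma}-1-\delta\}}\big)$. The plan is therefore not to reprove these but to eliminate the auxiliary parameter $\varepsilon$ and rewrite both quantities in terms of the cost budget $N$, finally passing to the $N$th minimal error by the definition of $e(N,B(K))$.

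First I would control the factor $\$(d(\varepsilon))$ appearing in the cost bound. Using the hypothesis $\$(\nu)=O(e^{\sigma\nu})$ together with the assumption $d(\varepsilon)=o(\ln(1/\varepsilon))$ from (\ref{mumford}), one obtains $\$(d(\varepsilon)) = O(e^{\sigma\, o(\ln(1/\varepsilon))}) = \varepsilon^{-o(1)}$. Hence the cost of $Q^{\CD}_\varepsilon$ satisfies $N := \cost(Q^{\CD},B(K)) = O(\varepsilon^{-2/\tilde\tau - o(1)})$ with $\tilde\tau := \min\{\tau,\decay_{\bsgamma}-1-\delta\}$. Inverting this relation (a logarithmic change of variables) yields $\varepsilon = O(N^{-\tilde\tau/2 + o(1)})$.

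Next I would substitute this into the error bound. Since $e(Q^{\CD},B(K))^2 = O(\varepsilon^{2-o(1)})$, the substitution gives $e(Q^{\CD},B(K))^2 = O(N^{-\tilde\tau + o(1)})$. Because $Q^{\CD}_\varepsilon \in \mathcal{A}^{\rm res}$ and its worst-case cost is at most $N$, the definition of the $N$th minimal error yields $e(N,B(K)) \le e(Q^{\CD},B(K))$, so the same bound holds for $e(N,B(K))^2$. Finally I would dispose of the artificial $\delta$ introduced in the reduction $\tau < \decay_{\bsgamma}-1$: recalling that in the case $\tau \ge \decay_{\bsgamma}-1$ the exponent $\tau$ was replaced by $\decay_{\bsgamma}-1-\delta$, one has $\tilde\tau \to \min\{\tau,\decay_{\bsgamma}-1\}$ as $\delta \downarrow 0$; absorbing both this $\delta$ and the $o(1)$ in the exponent into a single arbitrarily small $\delta>0$ produces the stated rate $-\min\{\tau,\decay_{\bsgamma}-1\}+\delta$. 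The equivalent bound $p^{\rm res} \le \max\{2/\tau,\,2/(\decay_{\bsgamma}-1)\}$ then follows directly from the definition of the exponent of strong polynomial tractability.

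The main obstacle is the careful bookkeeping of the $o(1)$ exponents across the inversion step $N \leftrightarrow \varepsilon$: one must check that the two subpolynomial loss factors, namely $\mathcal{B}(\varepsilon)=\varepsilon^{-o(1)}$ (already folded into (\ref{error_pw})) and $\$(d(\varepsilon))=\varepsilon^{-o(1)}$, genuinely remain subpolynomial after the logarithmic change of variables and can be absorbed into an arbitrarily small $\delta$ rather than degrading the convergence rate by a fixed amount. Everything else is routine, since the substantive work---the variance estimate (\ref{Var_Abs}), the vanishing of $S_{\mathcal{Q},u}$ on $\mathcal{Q}=\overline{\mathcal{Q}}$ that kills the bias, and the explicit tuning of $n_u$ and $\mathcal{Q}$---has already been carried out in the discussion preceding the theorem.
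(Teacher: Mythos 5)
Your proposal is correct and follows essentially the same route as the paper: the paper's proof of Theorem \ref{Theo_UB_General} consists precisely of the derivation of the error bound (\ref{error_pw}) and the cost bound preceding the theorem statement, with the conversion from the $\varepsilon$-parametrization to the cost budget $N$ (including absorbing $\$(d(\varepsilon))=\varepsilon^{-o(1)}$ and the auxiliary $\delta$ from the reduction $\tau<\decay_{\bsgamma}-1$ into the exponent) left implicit. You have simply made that final inversion step explicit, which is exactly the bookkeeping the paper intends.
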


\subsection{Finite-intersection weights}\label{UB_FOW}

In this subsection, we consider finite-order weights $\bsgamma$ of order $\beta$. Again we choose an anchor
of the form ${\bf a}=(a,a,\dots)$, where $a \in D$ satisfies \eqref{anker_a}.

For general finite-order weights of order $\beta$ we clearly have
$d(\varepsilon) \le \beta$ and
\begin{equation*}
  \mathcal{B} (\varepsilon) \le \max_{u\in \mathcal{Q}} \max \left\{ 1, (1+\ln(n_u+1))^{\alpha_1(\beta -1)^{\alpha_2}} \right\}
= O(1+\ln(1+\varepsilon^{-1})) = \varepsilon^{-o(1)}.
\end{equation*}
Let us now assume a stronger monotonicity condition than (A6), namely
\begin{equation}\label{monoton}
 \gamma_u \ge \gamma_v
\hspace{2ex}\text{for all $u,v\in \U$ with $u\subseteq v$.}
\end{equation}
Condition (\ref{monoton}) leads for $\emptyset \neq u \in \U$ to
\begin{equation*}
 \begin{split}
  r^2_{u,u,\bsa} = &\gamma_u \sum_{u'\subset \N\setminus u} \frac{\gamma_{u\cup u'}}{\gamma_u} k_{u'}(\bsa,\bsa)
\le \gamma_u \sum_{u'\subset \N\setminus u\,;\, \gamma_{u\cup u'} >0}  k(a,a)^{|u'|}\\
\le &\gamma_u(1+ \varrho) \max\{1, k(a,a)^{\beta}\} = O(\gamma_u).
 \end{split}
\end{equation*}
Thus (\ref{mumford}) holds. Moreover, (\ref{monoton}) implies
\begin{equation*}
 \sum_{j\in\N} \gamma^{(1)}_{\{j\}} \le \sum_{u\in\U} \gamma_u \le \eta \sum_{j\in\N} \gamma^{(1)}_{\{j\}},
\end{equation*}
where $\eta$ is as in (\ref{cond}). In particular, we have $\decay_{\bsgamma} = \decay_{\bsgamma^{(1)}}$.
These observations, Theorem \ref{Theorem3.1''}, Theorem \ref{Theo_UB_General}, and \cite[Thm.~4.3]{BG12} lead to the following result.

\begin{theorem}\label{Theo_UB_FIW}
Let $\$(\nu) = O(e^{\sigma\nu})$ for some $\sigma\in (0,\infty)$.
Let $\bsgamma$ be finite-intersection weights with
${\rm decay}_{{\bf \gamma}} > 1$.
If Assumption \ref{(A7)} and the monotonicity condition (\ref{monoton}) are satisfied
then we have for all $\delta >0$ that
\begin{equation*}
 e(N, B(K))^2 = O \left( N^{-\min\{\tau, \decay_{\bsgamma}-1 \} + \delta} \right),
\end{equation*}
or, equivalently,
\begin{equation*}
p^{{\rm res}}_{} \le \max \left\{ \frac{2}{\tau}\,,\,
\frac{2}{{\rm decay}_{{\bsgamma}} - 1} \right\}.
\end{equation*}
Assume additionally that condition
(\ref{alpha}) is satisfied for $\alpha = \tau$
and that $\$(\nu) = \Omega(\nu)$. Then
\begin{equation*}
p^{{\rm res}}_{} = \max \left\{ \frac{2}{\alpha}\,,\,
\frac{2}{{\rm decay}_{{\bsgamma}} - 1} \right\}.
\end{equation*}
\end{theorem}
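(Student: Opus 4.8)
The plan is to combine the upper bound from Theorem~\ref{Theo_UB_General} with the lower bound from Theorem~\ref{Theorem3.1''}, verifying that for finite-intersection weights the two bounds coincide. The upper bound is essentially immediate: finite-intersection weights are finite-order weights of some order $\beta$, and the discussion preceding the theorem statement already checks that $d(\varepsilon)\le\beta$, that $\mathcal{B}(\varepsilon)=\varepsilon^{-o(1)}$, and — crucially using the monotonicity condition (\ref{monoton}) together with the intersection-degree bound $1+\varrho$ — that $r^2_{u,u,\bsa}=O(\gamma_u)$. Hence all three requirements in (\ref{mumford}) hold, and Theorem~\ref{Theo_UB_General} applies verbatim to give $e(N,B(K))^2 = O(N^{-\min\{\tau,\decay_{\bsgamma}-1\}+\delta})$, equivalently $p^{\rm res}_{} \le \max\{2/\tau,\,2/(\decay_{\bsgamma}-1)\}$. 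This establishes the first assertion.

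For the second (matching) assertion I would invoke the lower bound of Theorem~\ref{Theorem3.1''}, which under the hypothesis $\$(\nu)=\Omega(\nu)$ (so that $s=1$ and $\min\{1,s\}=1$) and condition (\ref{alpha}) with exponent $\alpha$ yields
\begin{equation*}
p^{\rm res}_{} \ge \max\left\{ \frac{2}{\alpha}\,,\, \frac{2}{\decay_{\bsgamma^{(1)}}-1} \right\}.
\end{equation*}
To close the gap between this lower bound and the upper bound I need two identifications. First, the hypothesis $\alpha=\tau$ makes the two ``$2/\alpha$'' and ``$2/\tau$'' terms agree. Second — and this is the one genuinely nontrivial point — I must show $\decay_{\bsgamma}=\decay_{\bsgamma^{(1)}}$, so that the decay-dependent terms agree as well. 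This is exactly the content of the sandwiching inequality displayed just before the theorem, namely
\begin{equation*}
\sum_{j\in\N}\gamma^{(1)}_{\{j\}} \le \sum_{u\in\U}\gamma_u \le \eta \sum_{j\in\N}\gamma^{(1)}_{\{j\}},
\end{equation*}
which follows from monotonicity (\ref{monoton}) (each set $u$ with $\gamma_u>0$ has $\gamma_u\le\gamma_{\{j\}}$ for any $j\in u$, and by (\ref{cond}) each index $j$ lies in at most $\eta$ such sets). The same comparison at the level of the $1/p$-th powers forces the two summability thresholds to coincide, whence $\decay_{\bsgamma}=\decay_{\bsgamma^{(1)}}$.

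Combining these, the lower bound reads $p^{\rm res}_{}\ge\max\{2/\alpha,\,2/(\decay_{\bsgamma}-1)\}$, while the upper bound (using $\alpha=\tau$) reads $p^{\rm res}_{}\le\max\{2/\alpha,\,2/(\decay_{\bsgamma}-1)\}$, and the two match exactly, giving the claimed equality. The main obstacle I anticipate is not the matching of exponents, which is mechanical once the ingredients are in place, but rather ensuring that the decay comparison $\decay_{\bsgamma}=\decay_{\bsgamma^{(1)}}$ is valid at the level of fractional powers and not merely for the plain sums; the intersection-degree hypothesis is what guarantees the upper comparison constant $\eta$ is finite, and monotonicity is what guarantees the lower comparison, so both structural assumptions on finite-intersection weights are essential and neither can be dropped. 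The reference \cite[Thm.~4.3]{BG12} presumably supplies the corresponding statement in the variable-subspace model and can be cited to streamline the decay-invariance step.
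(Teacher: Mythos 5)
Your proposal is correct and follows essentially the same route as the paper: the upper bound comes from verifying the three conditions in (\ref{mumford}) (boundedness of $d(\varepsilon)$, $\mathcal{B}(\varepsilon)=\varepsilon^{-o(1)}$, and $r^2_{u,u,\bsa}=O(\gamma_u)$ via monotonicity and the intersection degree) and invoking Theorem~\ref{Theo_UB_General}, while the matching lower bound comes from Theorem~\ref{Theorem3.1''} with $s=1$, $\alpha=\tau$, and the identity $\decay_{\bsgamma}=\decay_{\bsgamma^{(1)}}$ obtained from the sandwiching inequality. Your remark that the sandwiching must be applied to the $1/p$-th powers of the weights (not just the plain sums) is a point the paper glosses over, and you handle it correctly; the only inaccuracy is your guess about the role of \cite[Thm.~4.3]{BG12}, which actually supplies multilevel-algorithm upper bounds for the sublinear-cost regime discussed in the remark following the theorem and is not needed for your argument.
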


\begin{remark}
With the help of suitable \emph{randomized multilevel algorithms}, we may also get sharp upper bounds for the
strong exponent of tractability in the case where the function $\nu \mapsto \$(\nu)$ grows slower than linearly
in $\nu$.  More precisely, we have the following result:

Let $\bsgamma$ be finite-intersection weights with
${\rm decay}_{{\bf \gamma}} > 1$.
Let Assumption \ref{(A7)} and the monotonicity condition (\ref{monoton}) be satisfied.
Assume that additionally condition (\ref{alpha}) is satisfied for $\alpha = \tau$.
Let $\$(\nu) = \Theta(\nu^s)$ for some $s\in (0,\infty)$.
Then we have
\begin{equation*}
p^{{\rm res}}_{} = \max \left\{ \frac{2}{\alpha}\,,\,
\frac{2\min\{1,s\}}{{\rm decay}_{{\bsgamma}} - 1} \right\}.
\end{equation*}
The upper bound on $p^{{\rm res}}_{}$ in the case where $0< s \le 1$ was
derived in \cite[Thm.~4.3]{BG12} with the help of randomized multilevel algorithms.
\end{remark}

\subsection{Product Weights}\label{UB_PW}

In this subsection, we discuss product weights. For product weights assumption (A6) is obviously satisfied and
additionally, due to $\bsa\in\X$, we have always
\begin{equation*}
 r^2_{u,u,\bsa} = \gamma_u \sum_{u'\subset\N \setminus u} \gamma_{u'} k_{u'}(\bsa,\bsa) = O(\gamma_u).
\end{equation*}
Furthermore, due to the definition of product weights and of $Q^{\CD}$, we have
\begin{equation*}
 d(\varepsilon) =\max \left\{ \ell \, \Big|\, c \widehat{C}^{\ell} L_{1-\alpha_0} \gamma^{\alpha_0}_{[\ell]} > \varepsilon^2 \right\},
\end{equation*}
and it was proved in \cite[Lemma~1]{PW11} that this quantity is indeed rather small in terms of $1/\varepsilon$,
namely
\begin{equation}\label{deps_est}
 d(\varepsilon) = O\left( \frac{\ln(1/\varepsilon)}{\ln\ln(1/\varepsilon)} \right) = o \left( \ln(1/\varepsilon) \right).
\end{equation}
Although the quantity $\mathcal{B}(\varepsilon)$ defined in (\ref{def_b_eps}) differs slightly from the quantity $B(\varepsilon)$ defined in
\cite[Sect.~3]{PW11}, we can use exactly the same argument used there for $B(\varepsilon)$ to show that also
$\mathcal{B}(\varepsilon) = \varepsilon^{-o(1)}$ as $\varepsilon \to 0$. We briefly repeat the argument for the convenience of the
reader: For $u\in \mathcal{Q}$ and $w\subseteq u$ with $|u| \ge |w| \ge 2$ we may write
$F_w(n_u) = \varepsilon^{-{\rm power}(w,n_u)}$ with
\begin{equation*}
 {\rm power}(w,n_u) := \alpha_1 \frac{(|w|-1)^{\alpha_2}}{\ln(1/\varepsilon)} \ln \left( 1+ \frac{\ln(1+n_u)}{(|w|-1)^{\alpha_2}} \right).
\end{equation*}
Put
$x:= (|w|-1)^{\alpha_2}/\ln(1/\varepsilon)$.
Then we have ${\rm power}(w,n_u) \le \alpha_1 x \ln(1+ \overline{c}/x)$ for some suitable constant $\overline{c}$.
Since $|w| \le |u| \le d(\varepsilon) = o(\ln(q/\varepsilon))$, the maximal value of $x$ tends to zero as $\varepsilon$
approaches zero, and the same holds for $x\ln(1+\overline{c}/x)$. Hence
\begin{equation*}
 \max_{u\in \mathcal{Q}} \max_{w\subseteq u\,;\, |w| \ge 2} {\rm power}(w,n_u) = o(1).
\end{equation*}

Therefore we obtain with Theorem \ref{Theo_UB_General}, Theorem \ref{Theorem3.1''}, and \cite[Thm.~4.5]{BG12}
the following result.

\begin{theorem}\label{Theo_UB_PW}
Let $\$(\nu) = O(e^{\sigma \nu})$ for some $\sigma\in (0,\infty)$.
Let $\bsgamma$ be product weights, and let
${\rm decay}_{{\bf \gamma}} > 1$.
If Assumption \ref{(A7)} is satisfied,
then we have for all $\delta >0$ that
\begin{equation*}
 e(N, B(K))^2 = O \left( N^{-\min\{\tau, \decay_{\bsgamma}-1 \} + \delta} \right),
\end{equation*}
or, equivalently,
\begin{equation*}
p^{{\rm res}}_{} \le \max \left\{ \frac{2}{\tau}\,,\,
\frac{2}{{\rm decay}_{{\bsgamma}} - 1} \right\}.
\end{equation*}
Assume additionally that condition
(\ref{alpha}) is satisfied for $\alpha = \tau$ and
that $\$(\nu) = \Omega(\nu)$. Then
\begin{equation*}
p^{{\rm res}}_{} = \max \left\{ \frac{2}{\alpha}\,,\,
\frac{2}{{\rm decay}_{{\bsgamma}} - 1} \right\}.
\end{equation*}
\end{theorem}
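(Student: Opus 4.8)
The plan is to assemble the theorem from the general machinery already in place, checking that product weights meet its hypotheses, and then matching the resulting upper bound against the lower bound of Theorem \ref{Theorem3.1''}. First I would verify that the three estimates collected in (\ref{mumford}) hold for product weights. The bound $r^2_{u,u,\bsa} = O(\gamma_u)$ is immediate from the product structure: writing $r^2_{u,u,\bsa} = \gamma_u \sum_{u'\subset \N\setminus u} \gamma_{u'} k_{u'}(\bsa,\bsa)$, the remaining sum is dominated by $\sum_{u'\in\U}\widehat{\gamma}_{u'} < \infty$, which is finite because $\bsa\in\X$. The estimate $d(\varepsilon) = o(\ln(1/\varepsilon))$ follows from \cite[Lemma~1]{PW11}, and $\mathcal{B}(\varepsilon) = \varepsilon^{-o(1)}$ follows from the $\mathrm{power}(w,n_u)$ computation reproduced above; both exploit that the $\varepsilon$-dimension grows only sub-logarithmically. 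Once (\ref{mumford}) is in hand, Theorem \ref{Theo_UB_General} applies verbatim and yields the upper bound $e(N,B(K))^2 = O(N^{-\min\{\tau,\decay_\bsgamma-1\}+\delta})$, equivalently $p^{\rm res} \le \max\{2/\tau,\, 2/(\decay_\bsgamma-1)\}$.

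For the sharp statement in the second part I would turn to the lower bound. Under the additional hypotheses $\alpha = \tau$ and $\$(\nu) = \Omega(\nu)$ (so that one may take $s=1$ and $\min\{1,s\}=1$ in Theorem \ref{Theorem3.1''}), that theorem gives $p^{\rm res} \ge \max\{2/\alpha,\, 2/(\decay_{\bsgamma^{(1)}}-1)\}$. The one genuine point to check is that the decay exponent of the cut-off-order-one weights agrees with that of the full product weights, namely $\decay_{\bsgamma^{(1)}} = \decay_\bsgamma$. I would establish this through the factorization $\sum_{u\in\U}\widehat{\gamma}_u^{1/p} = \prod_{j\in\N}\bigl(1+(\gamma_j k(a,a))^{1/p}\bigr)$, valid for product weights with the anchor $\bsa=(a,a,\dots)$, noting that the infinite product is finite precisely when $\sum_{j}(\gamma_j k(a,a))^{1/p} < \infty$, which is the defining series for $\decay_{\bsgamma^{(1)}}$. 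Hence the lower and upper bounds coincide and give $p^{\rm res} = \max\{2/\alpha,\, 2/(\decay_\bsgamma-1)\}$; I would also invoke \cite[Thm.~4.5]{BG12} as the product-weight precedent confirming this value.

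The main obstacle is not the final assembly but the verification of (\ref{mumford}), and within it specifically the sub-logarithmic growth $d(\varepsilon) = o(\ln(1/\varepsilon))$: this is the feature distinguishing product weights from arbitrary weights with $\decay_\bsgamma>1$, and it is exactly what lets the combinatorial factors $2^{2\ell}$ and $\mathcal{B}(\varepsilon)$ in the cost and error bounds be absorbed into the $\varepsilon^{-o(1)}$ slack (hence into the arbitrary $\delta$). I would therefore lean on \cite[Lemma~1]{PW11} for this step, since a direct estimate of the largest active coordinate set under a product-weight budget constraint is the technically delicate part; everything else reduces cleanly to the general upper bound Theorem \ref{Theo_UB_General} and the general lower bound Theorem \ref{Theorem3.1''}.
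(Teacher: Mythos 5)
Your proposal is correct and follows essentially the same route as the paper: verify the three estimates in (\ref{mumford}) for product weights (with $r^2_{u,u,\bsa}=O(\gamma_u)$ from the product structure and $\bsa\in\X$, $d(\varepsilon)$ handled by \cite[Lemma~1]{PW11}, and $\mathcal{B}(\varepsilon)=\varepsilon^{-o(1)}$ by the ${\rm power}(w,n_u)$ argument), then apply Theorem \ref{Theo_UB_General} for the upper bound and match it against the lower bound of Theorem \ref{Theorem3.1''} using $\decay_{\bsgamma^{(1)}}=\decay_{\bsgamma}$. The only cosmetic difference is that you prove this last identity directly via the factorization $\sum_{u\in\U}\widehat{\gamma}_u^{1/p}=\prod_{j\in\N}\bigl(1+(\gamma_j k(a,a))^{1/p}\bigr)$, whereas the paper simply cites \cite[Thm.~5]{DG12}.
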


For the lower bound on $p^{{\rm res}}_{}$ notice that for product weights we always
have $\decay_{\bsgamma} = \decay_{\bsgamma^{(1)}}$, see, e.g., \cite[Thm.~5]{DG12}.

\begin{remark}
Similarly as in the case of finite-intersection weights, we may obtain better upper bounds for the
strong exponent of tractability in the case where the function $\nu \mapsto \$(\nu)$ grows slower than linearly
in $\nu$ by using suitable randomized multilevel algorithms instead of changing dimension algorithms.
More precisely, the following was shown in \cite[Thm.~4.5]{BG12}:
Let $\$(\nu) = \Theta(\nu^s)$ for some $s$ satisfying $\frac{\alpha -1}{\alpha} \le s \le 1$.
Under condition (\ref{alpha}) and an additional assumption that slightly differs from  Assumption \ref{(A7)}
we have
\begin{equation*}
p^{{\rm res}}_{} = \max \left\{ \frac{2}{\alpha}\,,\,
\frac{2\min\{1,s\}}{{\rm decay}_{{\bsgamma}} - 1} \right\}.
\end{equation*}
In the case where $0<s<(\alpha-1)/\alpha$ we still have good upper and lower bounds for $p^{{\rm res}}_{}$,
but unfortunately they do not match anymore; for details see \cite[Thm.~4.5]{BG12}.
\end{remark}

\section{Examples: Unanchored Sobolev Spaces and Scrambled Polynomial Lattice Rules}
\label{SPLR}

In this section we specialize to a concrete example of a reproducing kernel Hilbert space of smoothness $\chi \in \mathbb{N}$ and explicit constructions of quadrature rules which satisfy Assumption~\ref{(A7)}.

\subsection{Unanchored Sobolev Spaces} \label{subsecunachoredSobspace}

We consider now the domain $D = [0,1]$ where $\Sigma$ is the Borel $\sigma$ algebra and $\rho$ the Lebesgue measure. The following reproducing kernel Hilbert space was, for instance, considered in \cite{BD09, KSWW10b}. For arbitrary $\chi \in \mathbb{N}$
we study numerical integration in the reproducing kernel Hilbert space $H(K_{\chi})$ with reproducing kernel
\begin{equation*}
K_{\chi}(\bsx, \bsy) = \sum_{u \in \mathcal{U} } \gamma_u \prod_{j\in u} k_{\chi}(x_j, y_j),
\end{equation*}
where
\begin{equation*}
k_\chi(x_j,y_j) = \sum_{\tau=1}^\chi \frac{B_\tau(x_j)}{\tau!} \frac{B_\tau(y_j)}{\tau!} + (-1)^{\chi+1} \frac{B_{2\chi}(|x_j-y_j|)}{(2\chi)!},
\end{equation*}
and where $B_\tau$ is the Bernoulli polynomial of degree $\tau$. Let $k_{\chi,u}(\bsx, \bsy) = \prod_{j\in u} k_\chi(x_j, y_j)$.
Note that $k_\chi$ satisfies Assumptions (A~1), (A~2), (A~3), (A~5), and (A 2a).

In one dimension, the inner product in $H(k_{\chi})$ is given by
\begin{equation*}
\langle f, g \rangle_{k_{\chi}} = \sum_{\tau=1}^{\chi-1} \int_0^1 f^{(\tau)}(x) \mathrm{d} x \int_0^1 g^{(\tau)}(x) \mathrm{d} x + \int_0^1 f^{(\chi)}(x) g^{(\chi)}(x) \mathrm{d} x
\end{equation*}
and the norm is given by
\begin{equation*}
\|f\|_{k_\chi} = \left(\sum_{\tau=1}^{\chi-1} \left(\int_0^1 f^{(\tau)}(x) \mathrm{d} x \right)^2 + \int_0^1 |f^{(\chi)}(x)|^2 \mathrm{d} x \right)^{1/2};
\end{equation*}
here $f^{(\tau)}$ and $g^{(\tau)}$, $\tau =1,\ldots,\chi$,  are the $\tau$th-distributional derivatives of
$f$ and $g$, respectively.
The norm in $H(k_{\chi, u})$ is given by
\begin{equation*}
\|f\|_{k_{\chi, u}} = \left(\sum_{v \subseteq u} \sum_{\bstau \in \{\chi\}^{|v|} \times [\chi-1]^{|u|-|v|} } \int_{[0,1]^{|v|}}
\left| \int_{[0,1]^{|u|-|v|}} \frac{\partial^{|\bstau|} f}{\prod_{j \in u} \partial x_j^{\tau_j} } \rd \bsx_{u\setminus v} \right|^2
\rd \bsx_v \right)^{1/2},
\end{equation*}
where for a multi-index $\bstau$ we denote the sum $\sum_{j\in u} \tau_j$ by $|\bstau|$.

For all $v\in\U$ let $K_{\chi,v} := \sum_{u \subseteq v} \gamma_u k_{\chi, u}$. For $f \in H(K_{\chi, v})$ we have the unique decomposition
\begin{equation}\label{eq_anova}
f = \sum_{u \subseteq v} f_u,
\end{equation}
where $f_u \in H(k_{\chi, u})$. Note that \eqref{eq_anova} is the ANOVA decomposition of $f$.

\subsection{Polynomial lattice rules}

We introduce some notation first. For a prime $b$, let $\FF_b$ be the finite field containing $b$ elements $\{0,\ldots, b-1 \}$ and by $\FF_b((x^{-1}))$ we denote the field of formal Laurent series over $\FF_b$. Every element of $\FF_b((x^{-1}))$ is of the form
  \begin{align*}
    L = \sum_{l=w}^{\infty}t_l x^{-l} ,
  \end{align*}
where $w$ is an arbitrary integer and all $t_l\in \FF_b$. Further, we denote by $\FF_b[x]$ the set of all polynomials over $\FF_b$. For a given $m\in\N$,
we define the map $v_m$ from $\FF_b((x^{-1}))$ to the interval $[0,1)$ by
  \begin{align*}
    v_m\left( \sum_{l=w}^{\infty}t_l x^{-l}\right) =\sum_{l=\max(1,w)}^{m}t_l b^{-l}.
  \end{align*}
We often identify $k\in \nat_0$, whose $b$-adic expansion is given by $k=\kappa_0+\kappa_1 b+\cdots +\kappa_{a-1} b^{a-1}$, with the polynomial over $\FF_b[x]$ given by $k(x)=\kappa_0+\kappa_1 x+\cdots +\kappa_{a-1} x^{a-1}$.  For $\bsk=(k_1,\ldots, k_s)\in (\FF_b[x])^s$ and $\bsq=(q_1,\ldots, q_s)\in (\FF_b[x])^s$,
we define the ``inner product'' as
  \begin{align*}
     \bsk \cdot \bsq =\sum_{j=1}^{s}k_j q_j \in \FF_b[x] ,
  \end{align*}
and we write $q\equiv 0 \pmod p$ if $p$ divides $q$ in $\FF_b[x]$.

The definition of a polynomial lattice rule is given as follows.
\begin{definition}\label{def:polynomial_lattice}
Let $b$ be prime and $m, s \in \nat$. Let $p \in \FF_b[x]$ be an irreducible polynomial with $\deg(p)=m$ and let $\bsq=(q_1,\ldots,q_s) \in (\FF_b[x])^s$. Now we construct a point set consisting of $b^m $points in $[0,1)^s$ in the following way:  For $0 \le h < b^m$, identify each $h$ with a polynomial $h(x)\in \FF_b[x]$ of $\deg(h(x))<m$. Then the $h$-th point is obtained by setting
  \begin{align*}
    \bsx_h
    &:=
    \left( v_m\left( \frac{h(x) \, q_1(x)}{p(x)} \right) , \ldots , v_m\left( \frac{h(x) \, q_s(x)}{p(x)} \right) \right) \in [0,1)^s .
  \end{align*}
The point set $\{\bsx_0, \bsx_1,\ldots, \bsx_{b^m-1}\}$ is called a \emph{polynomial lattice point set} and a QMC rule using this point set is called a \emph{polynomial lattice rule} with generating vector $\bsq$ and modulus $p$.
\end{definition}

\subsection{Owen's scrambling}

We now introduce Owen's scrambling algorithm. This procedure is best explained by using only one point $\bsx$. We denote the point obtained after scrambling $\bsx$ by $\bsy$. For $\bsx=(x_1,\ldots, x_s)\in [0,1)^s$, we denote the $b$-adic expansion by
  \begin{align*}
     x_j=\frac{x_{j,1}}{b}+\frac{x_{j,2}}{b^2}+\cdots ,
  \end{align*}
for $1\le j\le s$. Let $\bsy=(y_1,\ldots, y_s)\in [0,1)^s$ be the scrambled point whose $b$-adic expansion is represented by
  \begin{align*}
     y_j=\frac{y_{j,1}}{b}+\frac{y_{j,2}}{b^2}+\cdots ,
  \end{align*}
for $1\le j\le s$. Each coordinate $y_j$ is obtained by applying random permutations to each digit of $x_j$. Here the permutation applied to
$x_{j,k}$ depends on $x_{j,l}$ for $1\le l\le k-1$. In particular, $y_{j,1}=\pi_j(x_{j,1})$, $y_{j,2}=\pi_{j,x_{j,1}}(x_{j,2})$, $y_{j,3}=\pi_{j,x_{j,1},x_{j,2}}(x_{j,3})$,
and in general
  \begin{align*}
     y_{j,k}=\pi_{j,x_{j,1},\ldots, x_{j,k-1}}(x_{j,k}) ,
  \end{align*}
where $\pi_{j,x_{j,1},\ldots, x_{j,k-1}}$ is a random permutation of $\{0,\ldots, b-1\}$. We choose permutations with different indices mutually independent from each other
where each permutation is chosen uniformly distributed. Then, as shown in \cite[Proposition~2]{O95}, the scrambled point $\bsy$ is uniformly distributed in $[0,1)^s$.

In order to simplify the notation, we denote by $\Pi_j$ the set of permutations associated with the $j$th variable, that is,
  \begin{align*}
     \Pi_j=\{ \pi_{j,x_{j,1},\ldots, x_{j,k-1}}: k\in \nat, x_{j,1},\ldots, x_{j,k-1}\in \{0,\ldots, b-1 \}\} ,
  \end{align*}
and let $\boldsymbol{\Pi}=(\Pi_1,\ldots, \Pi_s)$. We simply write $\bsy=\boldsymbol{\Pi}(\bsx)$ when $\bsy$ is obtained by applying Owen's scrambling to $\bsx$ using the permutations in $\boldsymbol{\Pi}$.

\subsection{Interlaced scrambled polynomial lattice rules}

For the results below we use interlaced scrambled polynomial lattice rules, which we define in the following. We first define the interlacing function.

\begin{definition}\rm
For an integer $\alpha \ge 1$ the digit interlacing function (with interlacing factor $\alpha$) is defined by
\begin{eqnarray*}
\mathscr{D}_\alpha: [0,1)^{\alpha} & \to & [0,1) \\
(x_1,\ldots, x_{\alpha}) &\mapsto & \sum_{d=1}^\infty \sum_{r=1}^\alpha
\xi_{r,d} b^{-r - (d-1) \alpha},
\end{eqnarray*}
where $x_r = \xi_{r,1} b^{-1} + \xi_{r,2} b^{-2} + \cdots$ for $1
\le r \le \alpha$. We also define this function for vectors by setting
\begin{eqnarray*}
\mathscr{D}_\alpha: [0,1)^{\mathbb{N}} & \to & [0,1)^\mathbb{N} \\
(x_1, x_2, \ldots) &\mapsto & (\mathscr{D}_\alpha(x_1,\ldots, x_\alpha),  \mathscr{D}_\alpha(x_{\alpha +1},\ldots, x_{2\alpha}), \ldots),
\end{eqnarray*}
and for point sets $\{\bsx_0,\bsx_1, \ldots, \bsx_{N-1}\} \subseteq [0,1)^{\mathbb{N} }$ by
\begin{equation*}
\mathscr{D}_\alpha(\{\bsx_0,\ldots, \bsx_{N-1}\}) = \{\mathscr{D}_\alpha(\bsx_0), \mathscr{D}_\alpha(\bsx_1), \ldots, \mathscr{D}_\alpha(\bsx_{N-1})\}\subseteq[0,1)^{\mathbb{N}}.
\end{equation*}
\end{definition}

We can now define interlaced scrambled polynomial lattice rules.
\begin{definition}
Let the point set $\{\bsx_0, \bsx_1,\ldots, \bsx_{b^m-1}\}$ be a polynomial lattice point set and let $\boldsymbol{\Pi}$ be a randomly chosen set of permutations. Then the point set
\begin{equation*}
\mathscr{D}_{\alpha}(\{\boldsymbol{\Pi}(\bsx_0), \ldots, \boldsymbol{\Pi}(\bsx_{b^m-1})\})
\end{equation*}
is an interlaced scrambled polynomial lattice point set. A QMC rule using an interlaced scrambled polynomial lattice point set is called an interlaced scrambled polynomial lattice rule.
\end{definition}

\subsection{Results}

The following theorem follows by substituting Lemma~\ref{lem_sigma_bound} from the appendix in the beginning of the proof of \cite[Corollary~1]{GD12} and 
using \cite[Theorem~1]{GD12} (where we choose $\alpha = d = d_0 = \chi$ and $r_0 = \chi s$).

\begin{theorem}\label{theorem:cbc_proof}
Let $b$ be a prime and $m\in \N$. Then an interlaced scrambled polynomial lattice rule $Q_{u,n_u}$, using $n_u = b^m$ points,
can be constructed component-by-component such that for any $f \in H(K_{\chi, u})$ we have
\begin{align*}
\mathrm{Var}(Q_{u,n_u}(f)) \le & (n_u - 1)^{-\frac{1}{\lambda}} \left[ \sum_{\emptyset \neq v \subseteq u}\gamma_v^\lambda D_{\chi,\lambda}^{|v|+1}
\right]^{\frac{1}{\lambda}} \|f\|^2_{K_{\chi, u} },
\end{align*}
for all $1/(2 \chi+1)<\lambda\le 1$, where
\begin{align*}
D_{\chi,\lambda}= -1+(1+C_{b,\chi,\lambda})^\chi,
\end{align*}
and
\begin{align*}
C_{b,\chi,\lambda}=\max\left\{\left(\frac{4^{\chi} (b-1)}{1-b^{- 2\chi}}\right)^{\lambda}, \frac{4^{\lambda \chi} (b-1)}{1-b^{1-(2 \chi+1)\lambda}}\right\} .
\end{align*}	
\end{theorem}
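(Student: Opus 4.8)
The plan is to reduce the statement to the machinery of \cite{GD12}, following precisely the route indicated just before the theorem: apply the general variance estimate \cite[Theorem~1]{GD12} for interlaced scrambled polynomial lattice rules under the parameter identification $\alpha = d = d_0 = \chi$ and $r_0 = \chi s$ with $s = |u|$, and then replace the weight-dependent subset sum that appears at the very start of the proof of \cite[Corollary~1]{GD12} by the sharper estimate of Lemma~\ref{lem_sigma_bound} from the appendix. The component-by-component construction of the generating vector $\bsq$ is inherited verbatim from \cite[Corollary~1]{GD12}; nothing new has to be built, so the task is really one of parameter matching and constant tracking.

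First I would recall the variance representation for a scrambled digital net: expanding $f$ in its Walsh series, the variance of $Q_{u,n_u}(f)$ is a weighted sum $\sum_{\bsk \neq \bszero} \Gamma_{\bsk}\,|\hat f(\bsk)|^2$ of squared Walsh coefficients, the weights $\Gamma_{\bsk}$ being the gain coefficients of the interlaced, scrambled point set. Setting the interlacing factor equal to the smoothness, $\alpha = \chi$, is what converts the per-coordinate gain-coefficient bounds of \cite{GD12} into the constant $C_{b,\chi,\lambda}$; the two terms in the maximum defining $C_{b,\chi,\lambda}$ reflect the two regimes of $b$-adic digit indices (below and above the interlacing threshold $\chi$) encountered when one estimates the interlaced gain coefficients, and the admissible range $1/(2\chi+1) < \lambda \le 1$ is exactly the convergence threshold of the geometric digit sum $\sum b^{(1-(2\chi+1)\lambda)\cdot(\,\cdot\,)}$ that produces the second term, since its denominator $1 - b^{1-(2\chi+1)\lambda}$ is positive precisely for $\lambda > 1/(2\chi+1)$.

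Next I would perform the substitution. \cite[Theorem~1]{GD12} bounds, for the CBC-constructed $\bsq$, the variance by $(b^m-1)^{-1/\lambda}$ times the $1/\lambda$-power of a sum over subsets $\emptyset \neq v \subseteq u$ of $\gamma_v^\lambda$ multiplied by products of per-coordinate gain-coefficient bounds. Feeding Lemma~\ref{lem_sigma_bound} into the opening of the proof of \cite[Corollary~1]{GD12} in place of the estimate used there collapses those products into the closed form, yielding $\sum_{\emptyset \neq v \subseteq u} \gamma_v^\lambda D_{\chi,\lambda}^{|v|+1}$ with $D_{\chi,\lambda} = -1 + (1+C_{b,\chi,\lambda})^\chi$. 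Inserting this and writing $b^m - 1 = n_u - 1$ gives the claimed inequality for all $\lambda$ in the admissible range. The one point needing genuine care is the norm compatibility: one must check that the unanchored Sobolev norm $\|f\|_{K_{\chi,u}}$ dominates the Walsh-coefficient smoothness functional underlying \cite{GD12}, i.e.\ that membership in $H(K_{\chi,u})$ forces the Walsh coefficients of $f$ to decay at the order dictated by $\chi$ — this is what legitimizes applying \cite[Theorem~1]{GD12} to the space at hand.

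I expect the main obstacle to be exactly this norm-compatibility and constant-tracking step, which is what Lemma~\ref{lem_sigma_bound} is engineered to absorb. The delicate issue is that \cite{GD12} was phrased for a more restrictive weight structure, whereas here the weights $\gamma_v$ are arbitrary; keeping the interlaced gain-coefficient contribution cleanly separated from the weights — so that the weights factor out as $\gamma_v^\lambda$ while the geometry contributes only $D_{\chi,\lambda}^{|v|+1}$ — is the technical heart of the matter, and it is precisely the content supplied by the appendix lemma. Once that lemma is granted, the remainder is a direct substitution into the two cited results under the stated parameter choices, with no further estimation required.
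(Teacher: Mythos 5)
Your proposal matches the paper's own proof: the paper proves this theorem in exactly one sentence, namely by substituting Lemma~\ref{lem_sigma_bound} from the appendix into the beginning of the proof of \cite[Corollary~1]{GD12} and invoking \cite[Theorem~1]{GD12} with the parameter choices $\alpha = d = d_0 = \chi$ and $r_0 = \chi s$, which is precisely the substitution-and-parameter-matching route you describe. Your identification of the norm-compatibility issue (replacing the variation $V_\chi$ by the RKHS norm $\|\cdot\|_{K_{\chi,u}}$) as the genuine content absorbed by the appendix lemma is also exactly the paper's point.
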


If we choose the weights $\gamma_u$, $u\in\U$, in Theorem \ref{theorem:cbc_proof} to be all equal to one, we obtain the following corollary.

\begin{corollary}\label{corollary:cbc_proof_b}
Let $b$ be a prime and $m\in\N$. Then an interlaced scrambled polynomial lattice rule $Q_{u,n_u}$, using $n_u = b^m$ points in $[0,1]^{|u|}$, can be constructed component-by-component such that for any $f_w \in H(k_{\chi, w})$ with $w \subseteq u$ we have
\begin{align*}
\mathrm{Var}(Q_{u,n_u}(f_w)) \le & (n_u - 1)^{-\tau} D^{\tau}_{\chi, 1/\tau} (1 + D_{\chi,1/\tau})^{|u| \tau}  \|f_w\|^2_{k_{\chi, w}},
\end{align*}
for all $1 \le \tau < 2 \chi+1$, where
$D_{\chi, 1/\tau}$ is defined as in Theorem \ref{theorem:cbc_proof}.
\end{corollary}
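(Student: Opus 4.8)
The plan is to obtain the corollary as a direct specialization of Theorem~\ref{theorem:cbc_proof}: set all weights $\gamma_v = 1$ and choose the exponent $\lambda = 1/\tau$. First I would check that the parameter ranges match up. The theorem requires $1/(2\chi+1) < \lambda \le 1$, and under the substitution $\lambda = 1/\tau$ this is precisely the hypothesis $1 \le \tau < 2\chi+1$ of the corollary. Thus the theorem is directly applicable, and $D_{\chi,\lambda}$ becomes $D_{\chi,1/\tau}$ while the convergence factor $(n_u-1)^{-1/\lambda}$ becomes $(n_u-1)^{-\tau}$.

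Next I would simplify the bracketed sum appearing in the theorem's bound. With $\gamma_v = 1$ for all $v$, it reduces to $\sum_{\emptyset \neq v \subseteq u} D_{\chi,\lambda}^{|v|+1}$. Grouping the subsets $v \subseteq u$ according to their cardinality and invoking the binomial theorem, I get
\begin{equation*}
\sum_{\emptyset \neq v \subseteq u} D_{\chi,\lambda}^{|v|+1}
= D_{\chi,\lambda}\sum_{j=1}^{|u|}\binom{|u|}{j} D_{\chi,\lambda}^{j}
= D_{\chi,\lambda}\big[(1+D_{\chi,\lambda})^{|u|}-1\big]
\le D_{\chi,\lambda}(1+D_{\chi,\lambda})^{|u|}.
\end{equation*}
Raising this to the power $1/\lambda = \tau$ yields the factor $D_{\chi,1/\tau}^{\tau}(1+D_{\chi,1/\tau})^{|u|\tau}$, which is exactly the constant claimed in the corollary.

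Finally I would pass from a general $f \in H(K_{\chi,u})$ to a single-component function $f_w \in H(k_{\chi,w})$ with $w \subseteq u$. Since all weights equal one, Lemma~\ref{Lemma5} gives $\|f\|^2_{K_{\chi,u}} = \sum_{v \subseteq u}\|f_v\|^2_{k_{\chi,v}}$; regarding $f_w$ as an element of the larger space $H(K_{\chi,u})$, its only nonvanishing component is $f_w$ itself, so $\|f_w\|^2_{K_{\chi,u}} = \|f_w\|^2_{k_{\chi,w}}$. Substituting this identity into the specialized bound produces the stated inequality. The only step requiring any care is this last norm identification — one must verify that embedding $f_w$ into the weighted space does not change its norm — and with unit weights this is immediate from Lemma~\ref{Lemma5}; everything else is the routine binomial computation above.
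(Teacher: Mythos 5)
Your proposal is correct and matches the paper's own route: the paper obtains the corollary precisely by taking all weights equal to one in Theorem~\ref{theorem:cbc_proof}, with the substitution $\lambda = 1/\tau$, the binomial evaluation of $\sum_{\emptyset\neq v\subseteq u} D_{\chi,\lambda}^{|v|+1} = D_{\chi,\lambda}\bigl[(1+D_{\chi,\lambda})^{|u|}-1\bigr]$, and the norm identification $\|f_w\|_{K_{\chi,u}} = \|f_w\|_{k_{\chi,w}}$ (valid for unit weights by Lemma~\ref{Lemma5} and uniqueness of the decomposition) being exactly the steps the paper leaves implicit. Your write-up just makes these routine details explicit.
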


Note that Corollary~\ref{corollary:cbc_proof_b} ensures that Assumption~\ref{(A7)} (weakened in the sense of Remark \ref{Weak}) is satisfied
with $\alpha_1 = 0$.
Thus we deduce from Theorem~\ref{Theo_UB_General} the following result for general weights.
\begin{corollary}\label{CorUB_General}
Let $\$(\nu) = O(e^{\sigma\nu})$ for some $\sigma\in (0,\infty)$.
Let $\bsgamma$ be weights with $\decay_{\bsgamma} >1$. If (\ref{mumford}) holds, then we have for all $\delta >0$ that
\begin{equation*}
 e(N, B(K_{\chi}))^2 = O \left( N^{-\min\{2\chi+1, \decay_{\bsgamma}-1 \} + \delta} \right),
\end{equation*}
or, equivalently,
\begin{equation*}
p^{{\rm res}} \le \max \left\{ \frac{2}{2\chi +1}\,,\,
\frac{2}{{\rm decay}_{{\bsgamma}} - 1} \right\}.
\end{equation*}
\end{corollary}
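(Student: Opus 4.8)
The plan is to obtain Corollary~\ref{CorUB_General} as a direct specialization of Theorem~\ref{Theo_UB_General} to the space $H(K_\chi)$, taking interlaced scrambled polynomial lattice rules as the building blocks $Q_{u,n_u}$. The hypotheses $\$(\nu)=O(e^{\sigma\nu})$, $\decay_{\bsgamma}>1$, and (\ref{mumford}) are inherited directly, so the only substantive work is to check that these rules satisfy Assumption~\ref{(A7)}; once this is done it remains to read off the admissible exponent $\tau$ and let it approach $2\chi+1$.

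First I would verify Assumption~\ref{(A7)}. The blocks $Q_{u,n_u}$ are interlaced scrambled polynomial lattice rules using $n_u=b^m$ points. As equal-weight QMC rules their coefficients $t_i=1/n_u$ satisfy $\sum_i t_i=1$, i.e.\ (\ref{summe=1}), and they have the form (\ref{alg-form}). Owen's scrambling renders each point uniformly distributed on $[0,1)^{|u|}$, so the rules are unbiased; moreover the scrambling acts independently on each coordinate, the $j$th scrambled coordinate depending only on the randomness associated with coordinate $j$ and being uniformly distributed, so condition~(*) of Lemma~\ref{ANOVA} holds. Since such rules exist only for $n_u=b^m$, I would invoke Remark~\ref{Weak}, which shows that this restricted availability suffices. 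For the variance bound I would appeal to Corollary~\ref{corollary:cbc_proof_b}: for any fixed $\tau\in[1,2\chi+1)$ it yields, for all $w\subseteq u$ and all $f_w\in H(k_{\chi,w})$, an estimate $\Var(Q_{u,n_u}(f_w))\le c\,C^{|u|}(n_u+1)^{-\tau}\|f_w\|^2_{k_{\chi,w}}$, where $C:=(1+D_{\chi,1/\tau})^{\tau}$ and $c$ absorbs $D^{\tau}_{\chi,1/\tau}$ together with the constant (at most $3^\tau$, as $n_u\ge b\ge 2$) needed to pass from $(n_u-1)^{-\tau}$ to $(n_u+1)^{-\tau}$. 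This is exactly (\ref{annahme_pw11}) with $\alpha_1=0$, so that $F_w\equiv 1$, and any $\alpha_2\in[0,1]$.

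With Assumption~\ref{(A7)} in force I would apply Theorem~\ref{Theo_UB_General}. Given $\delta>0$, pick $\tau\in[1,2\chi+1)$ with $2\chi+1-\tau<\delta/2$; the interval $(\max\{1,2\chi+1-\delta/2\},2\chi+1)$ is nonempty, so such a $\tau$ exists. Then $\min\{\tau,\decay_{\bsgamma}-1\}\ge\min\{2\chi+1,\decay_{\bsgamma}-1\}-\delta/2$, and Theorem~\ref{Theo_UB_General} applied with this $\tau$ and its own threshold set to $\delta/2$ gives
\[
e(N,B(K_\chi))^2 = O\!\left(N^{-\min\{\tau,\decay_{\bsgamma}-1\}+\delta/2}\right) = O\!\left(N^{-\min\{2\chi+1,\decay_{\bsgamma}-1\}+\delta}\right).
\]
Letting $\delta\to 0$ and using the relation between the decay of $e(N,B(K_\chi))$ and the strong tractability exponent then yields $p^{{\rm res}}\le\max\{2/(2\chi+1),\,2/(\decay_{\bsgamma}-1)\}$.

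The proof is essentially bookkeeping; the only points requiring care are the matching of the variance estimate of Corollary~\ref{corollary:cbc_proof_b} to the precise form (\ref{annahme_pw11}) (the constants $c,C$ and the harmless replacement of $n_u-1$ by $n_u+1$) and the verification of unbiasedness and of condition~(*) of Lemma~\ref{ANOVA} for interlaced scrambled polynomial lattice rules. The latter is the genuine structural input and rests on the coordinate-wise independence of Owen's scrambling together with the uniform distribution of the scrambled points.
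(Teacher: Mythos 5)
Your proposal is correct and follows essentially the same route as the paper: the paper's own proof consists precisely of noting that Corollary~\ref{corollary:cbc_proof_b} verifies Assumption~\ref{(A7)} (weakened as in Remark~\ref{Weak}) with $\alpha_1=0$ and then invoking Theorem~\ref{Theo_UB_General}. Your additional bookkeeping---checking condition~(*) and unbiasedness for the scrambled rules, passing from $(n_u-1)^{-\tau}$ to $(n_u+1)^{-\tau}$, and choosing $\tau$ within $\delta/2$ of $2\chi+1$---correctly fills in details the paper leaves implicit.
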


For finite-intersection weights and product weights we can deduce the next result which follows from Theorem~\ref{Theo_UB_FIW} and \ref{Theo_UB_PW}, and Corollary~\ref{corollary:cbc_proof_b}. Note that condition (\ref{alpha}) is satisfied for $\alpha = 2\chi +1$ for the space $H(K_\chi)$ since this already holds for the one-variable case, see \cite[Section~2.2.9, Proposition~1(ii)]{Nov88}.

\begin{corollary}\label{Cor_UB_FIW}
Let $\$(\nu) = O(e^{\sigma\nu})$ for some $\sigma\in (0,\infty)$.
Let $\bsgamma$ be finite-intersection weights that satisfy
the monotonicity condition (\ref{monoton}) or let $\bsgamma$ be product weights.
Assume that ${\rm decay}_{{\bf \gamma}} > 1$. Then we have for all $\delta >0$ that
\begin{equation*}
 e(N, B(K_\chi))^2 = O \left( N^{-\min\{2\chi +1, \decay_{\bsgamma}-1 \} + \delta} \right),
\end{equation*}
or, equivalently,
\begin{equation*}
p^{{\rm res}}_{} \le \max \left\{ \frac{2}{2\chi +1}\,,\,
\frac{2}{{\rm decay}_{{\bsgamma}} - 1} \right\}.
\end{equation*}
Assume additionally that $\$(\nu) = \Omega(\nu)$. Then
\begin{equation*}
p^{{\rm res}}_{} = \max \left\{ \frac{2}{2\chi+1}\,,\,
\frac{2}{{\rm decay}_{{\bsgamma}} - 1} \right\}.
\end{equation*}
\end{corollary}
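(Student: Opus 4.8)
The plan is to read this corollary as a specialization of the general upper-bound theorems to the concrete space $H(K_\chi)$: I would feed the variance estimate of Corollary~\ref{corollary:cbc_proof_b} into Assumption~\ref{(A7)}, invoke Theorem~\ref{Theo_UB_FIW} or Theorem~\ref{Theo_UB_PW} to obtain the upper bound, and then match it against the lower bound of Theorem~\ref{Theorem3.1''} using the sharp smoothness exponent $\alpha = 2\chi+1$ of $H(K_\chi)$.

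For the upper bound I would fix an arbitrary $\tau$ with $1 \le \tau < 2\chi+1$ and apply Corollary~\ref{corollary:cbc_proof_b}: for each $\emptyset \neq u \in \mathcal{Q}$ and each $n_u = b^m$, the component-by-component constructed interlaced scrambled polynomial lattice rules $Q_{u,n_u}$ are unbiased with equal weights summing to one, satisfy condition~(*) of Lemma~\ref{ANOVA} (the scrambling acts independently in each coordinate and produces uniform marginals), and obey (\ref{annahme_pw11}) with $\alpha_1 = 0$, so that the logarithmic factor $F_w \equiv 1$, with $c = D_{\chi,1/\tau}^{\tau}$ and $C = (1+D_{\chi,1/\tau})^{\tau}$. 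The only formal discrepancies with (\ref{annahme_pw11}) are the factor $(n_u-1)^{-\tau}$ in place of $(n_u+1)^{-\tau}$ and the restriction to $n_u = b^m$; the former is absorbed into $c$ because $(n_u-1) \ge (n_u+1)/3$ for $n_u = b^m \ge 2$, and the latter is permitted by Remark~\ref{Weak}. Hence Assumption~\ref{(A7)} holds for this $\tau$, and Theorem~\ref{Theo_UB_FIW} (finite-intersection case, where the monotonicity condition (\ref{monoton}) is assumed) or Theorem~\ref{Theo_UB_PW} (product case) yields, for every $\delta' > 0$,
\begin{equation*}
e(N, B(K_\chi))^2 = O\!\left( N^{-\min\{\tau,\, \decay_{\bsgamma}-1\} + \delta'} \right).
\end{equation*}
Letting $\tau \uparrow 2\chi+1$ and $\delta' \downarrow 0$ gives the asserted upper bound, equivalently $p^{\rm res} \le \max\{2/(2\chi+1),\, 2/(\decay_{\bsgamma}-1)\}$.

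For the matching lower bound under $\$(\nu) = \Omega(\nu)$ I would use that (\ref{alpha}) holds for $H(K_\chi)$ with $\alpha = 2\chi+1$: this is the classical univariate result \cite[Section~2.2.9, Proposition~1(ii)]{Nov88}, and it transfers to $H(K_\chi)$ since $B(\gamma^{(1)}_{\{1\}} k_\chi) \subseteq B(K_\chi)$. Applying Theorem~\ref{Theorem3.1''} with $\alpha = 2\chi+1$ and $s = 1$, and using $\decay_{\bsgamma} = \decay_{\bsgamma^{(1)}}$ — which holds for finite-intersection weights satisfying (\ref{monoton}) (Section~\ref{UB_FOW}) and for product weights (Section~\ref{UB_PW}) — gives $p^{\rm res} \ge \max\{2/(2\chi+1),\, 2/(\decay_{\bsgamma}-1)\}$. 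Combining the two bounds yields the claimed equality.

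The one point needing care is the interplay between the algorithm parameter $\tau$ and the smoothness exponent $\alpha$. One cannot merely quote the equality assertions of Theorems~\ref{Theo_UB_FIW} and~\ref{Theo_UB_PW}, since those require $\alpha = \tau$, whereas Assumption~\ref{(A7)} is available here only for $\tau$ strictly below $2\chi+1$, while the sharp value $\alpha = 2\chi+1$ enters solely through the space-specific bound (\ref{alpha}). The resolution is to keep the two halves separate: the upper bound is taken as the supremum over admissible $\tau < 2\chi+1$, and the lower bound is taken from the independently established value $\alpha = 2\chi+1$. The two meet exactly at the exponent $2/(2\chi+1)$, so no gap remains.
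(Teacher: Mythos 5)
Your proof is correct and takes essentially the same route as the paper: verify Assumption~\ref{(A7)} (weakened in the sense of Remark~\ref{Weak}, with $\alpha_1=0$ and the constants $(n_u-1)^{-\tau}$ vs.\ $(n_u+1)^{-\tau}$ absorbed) via Corollary~\ref{corollary:cbc_proof_b}, apply Theorem~\ref{Theo_UB_FIW} or Theorem~\ref{Theo_UB_PW} for the upper bound, and match it with the lower bound from Theorem~\ref{Theorem3.1''} using $\alpha = 2\chi+1$ from Novak's univariate result together with $\decay_{\bsgamma} = \decay_{\bsgamma^{(1)}}$ for these weight classes. Your explicit separation of the two halves --- taking the supremum of the upper bound over admissible $\tau < 2\chi+1$ while anchoring the lower bound at $\alpha = 2\chi+1$, rather than invoking the equality assertions of those theorems with $\alpha=\tau$ --- is precisely the detail the paper's one-sentence deduction leaves implicit, and you resolve it correctly.
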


\section*{Appendix}

In \cite[Section~3.1]{GD12} (see also \cite[Section~3.2]{D11a}) a variation $V_{\chi} = V_{\chi, \bsgamma}$ was defined, which for functions $f$ with continuous partial derivatives of order up to $\chi$ in each variable, is given by
\begin{equation*}
V_{\chi, \bsgamma}(f) = \left( \sum_{u \subseteq [d]} \gamma_u^{-1} \sum_{\boldsymbol{\tau} \in [\chi]^{|u|}} \int_{[0,1]^{|u|}}
\left| \int_{[0,1]^{d-|u|}} \frac{\partial^{|\boldsymbol{\tau}|} f}{\prod_{j\in u} \partial x_j^{\tau_j} } \rd \bsx_{[d]\setminus u} \right|^2 \rd \bsx_u \right)^{1/2},
\end{equation*}
where for $u = \emptyset$ we set $\boldsymbol{\tau} = \bszero$. For instance, for $d=1$ we have
\begin{equation*}
V_{\chi, \bsgamma}(f) = \left(\left( \gamma_{\emptyset}^{-1} \int_0^1 f(x) \rd x\right)^2
+ \gamma_{\{1\}}^{-1} \sum_{\tau=1}^\chi \int_0^1 |f^{(\tau)}(x)|^2 \rd x\right)^{1/2}.
\end{equation*}
By the tensor product structure of the Hilbert spaces $H(k_{\chi,u})$ it follows that
\begin{equation*}
 \sum_{u \subseteq [d]} \gamma_u^{-1} \|f_u\|^2_{k_{\chi, u}} \le V^2_{\chi, \bsgamma}(f).
\end{equation*}

We now show that a reverse estimate also holds. We consider $d=1$ first. Let $0 \le \tau < \chi$. If $f^{(\tau)}$ is absolutely continuous, then the fundamental theorem of calculus gives us
\begin{equation*}
f^{(\tau)}(x) = \int_0^1 f^{(\tau)}(y) \rd y - \int_0^1 \int_x^y f^{(\tau+1)}(z) \rd z \rd y.
\end{equation*}
Using this formula we deduce
\begin{align*}
|f^{(\tau)}(x)|^2 = & \left(\int_0^1 f^{(\tau)}(y) \rd y \right)^2 - 2 \int_0^1 f^{(\tau)}(y) \rd y \int_0^1 \int_x^y f^{(\tau+1)}(z) \rd z \rd y \\ & + \left(\int_0^1 \int_x^y f^{(\tau+1)}(z) \rd z \rd y \right)^2 \\ \le & 2 \left[\left(\int_0^1 f^{(\tau)}(y) \rd y\right)^2 + \left(\int_0^1 |f^{(\tau+1)}(z)| \rd z \right)^2 \right] \\ \le & 2 \left[\left(\int_0^1 f^{(\tau)}(y) \rd y \right)^2 + \int_0^1 |f^{(\tau+1)}(z)|^2 \rd z \right].
\end{align*}
Thus we obtain
\begin{equation}\label{sobolev_estimate}
\int_0^1 |f^{(\tau)}(x)|^2 \rd x \le 2 \left[\left(\int_0^1 f^{(\tau)}(y) \rd y\right)^2 + \int_0^1 |f^{(\tau+1)}(z)|^2 \rd z \right].
\end{equation}
By repeated application of this formula we obtain
\begin{equation*}
\sum_{\tau=1}^\chi \int_0^1 |f^{(\tau)}(x)|^2 \rd x \le (2^{\chi}-1) \left[\sum_{\tau=1}^{\chi-1} \left(\int_0^1 f^{(\tau)}(x) \rd x \right)^2 + \int_0^1 |f^{(\chi)}(x)|^2 \rd x \right].
\end{equation*}

Let $P_u$ denote the set of polynomials defined on $[0,1]^{u}$. Thus, by applying the above formula in each coordinate, we obtain for functions $f \in P_u$
that
\begin{equation}\label{ineq_V_k_u}
V_{\chi,\bsgamma}^2(f) \le \gamma_u^{-1}(2^{\chi}-1)^{|u|} \|f\|^2_{k_{\chi,u}}.
\end{equation}
Thus, for $f \in P_{[d]}$ we have
\begin{equation*}
\sum_{u \subseteq [d]} \gamma_u^{-1} \|f\|^2_{k_{\chi,u}} \le V_{\chi,\bsgamma}^2(f) \le \sum_{u \subseteq [d]} \gamma_u^{-1} (2^{\chi }-1)^{|u|} \|f\|^2_{k_{\chi,u}},
\end{equation*}
and in particular, for $\chi=1$ we have equality.

Let $r \ge 1$ and $k_1,\ldots, k_r \in \mathbb{N}_0$. Let $k_i =
\kappa_{i,0} + \kappa_{i,1} b + \cdots$, where $\kappa_{i,a} \in
\{0,\ldots, b-1\}$ and $\kappa_{i,a} = 0$ for $a$ large enough. We define a digit interlacing function $\mathscr{E}_r$ for natural numbers by
\begin{eqnarray*}
\mathscr{E}_r: \mathbb{N}^{r} & \to & \mathbb{N} \\
(k_1,\ldots, k_{r}) &\mapsto & \sum_{a=0}^\infty \sum_{z=1}^r
\kappa_{z,a} b^{z -1 + a r}.
\end{eqnarray*}
We also extend this function to vectors
\begin{eqnarray*}
\mathscr{E}_r: \mathbb{N}^{dr} & \to & \mathbb{N}^d \\ (k_1,\ldots, k_{dr}) & \mapsto & (\mathscr{E}_r(k_1,\ldots, k_r), \ldots, \mathscr{E}_r(k_{r(d-1)+1}, \ldots, k_{dr})).
\end{eqnarray*}

Let $d \ge 1$ and $\bsell = (\bsell_1,\ldots, \bsell_d) \in \mathbb{N}_0^{d r}$,
where $\bsell_i = (l_{(i-1)r+1},\ldots, l_{ir})$. Let
\begin{equation*}
B_{r,\bsell,d} = \{(k_1,\ldots, k_{d r}) \in \mathbb{N}_0^{d r}:
\lfloor b^{l_i-1} \rfloor \le k_i < b^{l_i} \mbox{ for } 1 \le i \le
d r\}.
\end{equation*}
Let $\widehat{f}(\bsk) = \int_{[0,1]^d} f(\bsx) \overline{\wal_{\bsk}(\bsx)} \rd \bsx$ denote the Walsh coefficient of $f$. For $\bsell_u \in \mathbb{N}^{|u|}$ we set
\begin{equation*}
\sigma^2_{r,(\bsell_u, \bszero) ,d}(f) =  \sum_{\bsk \in B_{r, (\bsell_u, \bszero) ,d}}
\left|\widehat{f}(\mathscr{E}_r(\bsk)) \right|^2.
\end{equation*}
For $u \subseteq \{1, 2, \ldots, dr\}$ we define the set $v(u) \subseteq \{1,\ldots, d\}$ as the set of $1 \le j \le d$
such that $u \cap \{(j-1) r, (j-1) r+1, \ldots, j r \} \neq \emptyset$. It is straightforward to show that
\begin{equation*}
\sigma^2_{r, (\bsell_u, \bszero), d}(f)= \sigma^2_{r, (\bsell_u, \bszero), d}(f_{v(u)}),
\end{equation*}
where $f_{v(u)} \in H(k_{\chi, v(u)})$ is the ANOVA component of $f$ of the set $v(u) \subseteq [d]$.

Let $\mu(0) = 0$ and for $k \in \mathbb{N}$ with $k = \kappa_0 + \kappa_1 b + \cdots \kappa_{a-1} b^{a-1}$, where $\kappa_i \in \{0,1, \ldots, b-1\}$ and $\kappa_{a-1} \neq 0$, we set $\mu(k) = a$. In \cite[Lemma~9]{D11a} and \cite[Section~3.1]{GD12} a bound on $\sigma_{r,\bsell,d}(f)$ was proven of the form
\begin{equation}\label{sigma_bound}
\sigma_{r, (\bsell_u, \bszero), d}(f) \le 2^{|v(u)| \max(r-\chi,0)}  \gamma_{v(u)}^{1/2} \prod_{j\in u} b^{-\min(\chi, r) \mu(k_j)} V_\chi(f),
\end{equation}
where $\bsk_u = (k_j)_{j \in u}$ is such that $(\bsk_u, \bszero) \in B_{r, (\bsell_u, \bszero), d}$. The aim is now to show that the above inequality also holds when one replaces $V_{\chi}(f)$ by $\|f\|_{K_{\chi, [d]}}$ (with a different constant, see below). The proof proceeds by showing the result for a dense subset of $H(K_{\chi,[d]})$ and then extending the result to all functions in $H(K_{\chi,[d]})$. In the following we show that the set $P_{[d]}$ is dense in $H(K_{\chi,[d]})$.
\begin{lemma}
The set of polynomials $P_{[d]}$ is dense in $H(K_{\chi,[d]})$.
\end{lemma}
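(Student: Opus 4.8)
The plan is to exploit the orthogonal direct-sum structure of $H(K_{\chi,[d]})$ supplied by Lemma~\ref{Lemma5}. Since $[d]$ is finite, that lemma gives $\|f\|^2_{K_{\chi,[d]}} = \sum_{u\subseteq[d]} \gamma_u^{-1}\|f_u\|^2_{k_{\chi,u}}$ for $f=\sum_{u\subseteq[d]}f_u$, where $f_u\in H(k_{\chi,u})$ is the $u$th ANOVA component. Consequently, to approximate a given $f$ in $H(K_{\chi,[d]})$ it suffices to approximate each component $f_u$ separately in $H(k_{\chi,u})$, \emph{provided} the resulting pieces can be reassembled into a single genuine polynomial without destroying the ANOVA structure. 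I would therefore first establish density in the one-dimensional factor and then lift it through the tensor product.

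For the one-dimensional case, recall that $H(k_\chi)$ consists of mean-zero functions (this follows from assumption (A~2a), since $\int_0^1 k_\chi(x,y)\,\mathrm{d}x=0$) equipped with the norm displayed in Section~\ref{subsecunachoredSobspace}. Given $f\in H(k_\chi)$, I would pick a polynomial $r_n$ with $\|f^{(\chi)}-r_n\|_{L^2}\to 0$ (polynomials are dense in $L^2[0,1]$) and define a polynomial $q_n$ by antidifferentiating $r_n$ exactly $\chi$ times, fixing the $\chi$ integration constants so that $\int_0^1 q_n^{(\tau)}(x)\,\mathrm{d}x = \int_0^1 f^{(\tau)}(x)\,\mathrm{d}x$ for $\tau=1,\ldots,\chi-1$ and $\int_0^1 q_n(x)\,\mathrm{d}x=0$. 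These $\chi$ linear conditions form a triangular system in the integration constants and hence are uniquely solvable. With this choice the boundary-type terms in the norm cancel exactly, so $\|f-q_n\|_{k_\chi}^2 = \|f^{(\chi)}-r_n\|_{L^2}^2\to 0$; thus mean-zero polynomials are dense in $H(k_\chi)$.

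Next I would pass to $H(k_{\chi,u}) = \bigotimes_{j\in u} H(k_\chi)$. Since finite sums of elementary tensors of elements from a dense subset are dense in a Hilbert-space tensor product, the polynomials on $[0,1]^u$ of the form $\sum_k \prod_{j\in u} q_{j,k}(x_j)$ with each $q_{j,k}$ a mean-zero one-dimensional polynomial are dense in $H(k_{\chi,u})$; note that every such function is mean-zero in each of its variables $x_j$, $j\in u$. Given $f\in H(K_{\chi,[d]})$ and $\varepsilon>0$, I would then choose for each $u\subseteq[d]$ such a polynomial $g_u$ with $\gamma_u^{-1}\|f_u-g_u\|^2_{k_{\chi,u}}<\varepsilon\,2^{-d}$ (taking $g_\emptyset=f_\emptyset$, a constant), and set $p:=\sum_{u\subseteq[d]}g_u\in P_{[d]}$.

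The final and most delicate step is to verify that $p$ carries the intended ANOVA components. Because each $g_u$ depends only on the variables in $u$ and has vanishing one-dimensional mean in every such variable, $g_u$ is by itself a pure order-$u$ ANOVA term; by uniqueness of the ANOVA decomposition the $v$th ANOVA component of $p$ equals $g_v$. Lemma~\ref{Lemma5} then yields $\|f-p\|^2_{K_{\chi,[d]}} = \sum_{v\subseteq[d]}\gamma_v^{-1}\|f_v-g_v\|^2_{k_{\chi,v}}<\varepsilon$, which proves the claim. I expect the main obstacle to be precisely this bookkeeping: ensuring that the separately constructed component approximations recombine into one polynomial whose norm splits correctly, which hinges on carrying the mean-zero normalization through from the one-dimensional construction.
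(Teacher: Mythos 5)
Your argument is correct, and while its one-dimensional engine is the same as the paper's (approximate $f^{(\chi)}$ by a polynomial in $L^2[0,1]$ via Stone--Weierstra{\ss} and antidifferentiate $\chi$ times), it differs from the paper's proof in two genuine ways. First, the normalization: the paper anchors each successive antiderivative at a boundary value, $q_{\tau+1}(x)=f^{(\chi-\tau-1)}(0)+\int_0^x q_\tau(t)\rd t$, so each moment term $\left(\int_0^1(f-q)^{(\tau)}(x)\rd x\right)^2$ is only controlled by the $L^2$ error of the corresponding derivative, and these errors accumulate through the $\chi$ levels (the paper ends with $2\varepsilon$ in one dimension and $2^d\varepsilon$ in dimension $d$); you instead choose the integration constants to reproduce the moments $\int_0^1 f^{(\tau)}(x)\rd x$ exactly (your triangular system is indeed uniquely solvable), so the cross terms in the $k_\chi$-norm cancel identically and $\|f-q_n\|_{k_\chi}=\|f^{(\chi)}-r_n\|_{L^2}$. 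Second, the multivariate step: the paper applies its one-dimensional construction ``component-wise'' to the top mixed derivative $f^{(\boldsymbol{\chi})}$ --- a rather terse reduction --- whereas you lift density through the tensor products $H(k_{\chi,u})=\bigotimes_{j\in u}H(k_\chi)$ and reassemble via the orthogonal splitting of Lemma~\ref{Lemma5}, the key verification being that a polynomial depending only on the variables in $u$ and mean-zero in each of them is a pure order-$u$ ANOVA term, so that the norm of $f-p$ splits exactly into the componentwise errors (this also handles the weights $\gamma_u$ cleanly, including $\gamma_u=0$, where one must take $g_u=0$). Your route buys exact identities and a structurally transparent higher-dimensional argument, at the price of invoking the Hilbert tensor-product density fact and the identification of $H(k_\chi)$ with the mean-zero functions in the Sobolev space --- a standard fact which you, like the paper, take as given, and which is needed to know that your mean-zero polynomials actually lie in $H(k_\chi)$; the paper's proof is more elementary and self-contained, at the price of accumulating constants and a sketchier $d$-dimensional step.
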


\begin{proof}
We consider the case $d=1$ first. Let $f \in H(K_{\chi,1})$. By the Stone-Weierstra{\ss} approximation theorem, the fact that continuous functions are dense in $L_2([0,1])$ and the fact that $f^{(\chi)} \in L_2([0,1])$ implies that for any $\varepsilon > 0$ there exists a polynomial $q_0 \in P_{\{1\}}$ such that
\begin{equation*}
\int_0^1 (f^{(\chi)}(x)-q_0(x))^2 \rd x < \varepsilon.
\end{equation*}
Let
\begin{equation*}
q_1(x) = f^{(\chi-1)}(0) + \int_0^x q_0(t) \rd t.
\end{equation*}
Then we have
\begin{align*}
f^{(\chi-1)}(x) - q_1(x) = & \int_0^x (f^{(\chi)}(t) - q_0(t)) \rd t.
\end{align*}
Using this equality we obtain
\begin{align*}
\int_0^1 (f^{(\chi-1)}(x) - q_1(x))^2 \rd x = & \int_0^1 \left[ \int_0^x (f^{(\chi)}(t) - q_0(t)) \rd t \right]^2 \rd x \\ \le & \int_0^1 \int_0^x 1 \rd t \int_0^x (f^{(\chi)}(t)-q_0(t))^2 \rd t \rd x \\ \le & \int_0^1 x \int_0^1 (f^{(\chi)}(t) - q_0(t))^2 \rd t \rd x \\ < & \frac{\varepsilon}{2}.
\end{align*}
By repeating this argument we obtain a sequence of polynomials $q_0, q_1, \ldots, q_\chi$ such that
\begin{equation*}
\left(\int_0^1 (f^{(\tau)}(t) - q_{\chi-\tau}(t) \right)^2 \le \int_0^1 (f^{(\tau)}(t) - q_{\chi-\tau}(t))^2 \rd t < 2^{\tau-\chi} \varepsilon.
\end{equation*}
This shows that $p_\chi$ satisfies $\|f-p_\chi\|_{K_{\chi,1}} < 2 \varepsilon$ and therefore $P_{\{1\}}$ is dense in $H(K_{\chi,1})$.

For arbitrary dimension $d \ge 1$ we have that for any $f \in H(K_{\chi,[d]})$ and $\varepsilon > 0$ there exists a $q_{\bszero} \in P_{[d]}$ such that $\int_{[0,1]^{d}} (f^{(\boldsymbol{\chi})}(\bsx)-q_{\bszero}(\bsx))^2 \rd \bsx < \varepsilon$. The construction for the case $d =1$ can now be applied component-wise to obtain a polynomial $q_{\boldsymbol{\chi}}$ for which $\|f-q_{\boldsymbol{\chi}}\|^2_{K_{\chi,[d]}} < 2^d \varepsilon$. Thus the result follows.
\end{proof}

Let now $f \in H(K_{\chi,[d]})$.   Then there exists a sequence of functions $(f_i)_{i \ge 1}$  in $P_{[d]}$ such that
$\|f-f_i\|_{K_{\chi,[d]}} \to 0$ as $i \to \infty$. This implies that $\|f-f_i\|_{L_2} \to 0$ as $i \to \infty$.

Since for $f_i \in P_u$ we have $V_\chi(f_i) \le 2^{\chi |u|} \gamma_u^{-1/2} \|f_i\|_{k_{\chi, u}}$, we obtain from \eqref{sigma_bound} 
\begin{equation*}
\sigma_{r, (\boldsymbol{l}_u, \bszero), d}(f_i) \le 2^{|v(u)| \max(\chi, r)}  \prod_{j\in u} b^{-\min(\chi, r) \mu(k_j)} \|f_i\|_{k_{\chi,v(u)}}.
\end{equation*}

For any $\bsk \in \mathbb{N}^d$ we have
\begin{equation*}
|\widehat{f}(\bsk) - \widehat{f}_i(\bsk) |  = | \langle f - f_i, \wal_{\bsk} \rangle_{L_2} | \le \|f-f_i\|_{L_2}
\end{equation*}
and therefore
\begin{equation*}
\left| |\widehat{f}(\bsk)|^2 - |\widehat{f}_i(\bsk)|^2 \right| \le \|f-f_i\|_{L_2} \left[\|f\|_{L_2} + \|f_i\|_{L_2} \right] \le \|f-f_i\|_{L_2} [2 \|f\|_{L_2} + \|f-f_i\|_{L_2} ].
\end{equation*}
Let now $\bsell_u \in \mathbb{N}^{u}$ with $\emptyset \neq u \subseteq \{1,2,\ldots, dr\} $, $\varepsilon > 0$ and $i \in \mathbb{N}$ be such that  $$\|f-f_i\|_{L_2} (2 \|f\|_{L_2} + \|f-f_i\|_{L_2}) b^{\|\bsell_u\|_1} \le \varepsilon.$$ Then we have
\begin{align*}
|\sigma_{r, (\bsell_u, \bszero), d}(f) - \sigma_{r, (\bsell_u, \bszero), d}(f_i) | \le & \sum_{\bsk \in B_{r,(\bsell_u, \bszero),d}} \left| \left|\widehat{f}(\mathscr{E}_r(\bsk)) \right|^2 - \left|\widehat{f}_i(\mathscr{E}_r(\bsk)) \right|^2 \right| \\ \le & \|f-f_i\|_{L_2} (2 \|f\|_{L_2} + \|f-f_i\|_{L_2}) b^{\|\bsell_u\|_1} \le \varepsilon.
\end{align*}
Thus we obtain
\begin{align*}
\sigma_{r, (\bsl_u, \bszero), d}(f) \le & \sigma_{r, (\bsl_u, \bszero), d}(f_i) + \varepsilon \\  \le & 2^{|v(u)| \max(\chi, r)}  \prod_{j\in u} b^{-\min(\chi, r) \mu(k_j)} \|f_i\|_{k_{\chi, v(u)}} + \varepsilon.
\end{align*}
Since $\|f_i\|_{K_{\chi, [d]}} \le \|f\|_{K_{\chi, [d]}} + \varepsilon$ we obtain
\begin{align*}
\sigma_{r, (\bsl_u,\bszero) , d}(f) \le & \sigma_{r, (\bsl_u, \bszero), d}(f_i) + \varepsilon \\
\le &  \gamma_{v(u)}^{1/2}  2^{|v(u)| \max(\chi, r)}  \prod_{j\in u} b^{-\min(\chi, r) \mu(k_j)} \|f\|_{K_{\chi,[d]}} \\ & + \left(1 + 2^{|v(u)| \max(\chi, r)}  \prod_{j\in u} b^{-\min(\chi, r) \mu(k_j)} \right) \varepsilon.
\end{align*}
Since $\varepsilon > 0$ can be chosen arbitrarily small, we obtain
\begin{equation*}
\sigma_{r, (\bsl_u, \bszero), d}(f) \le  \gamma_{v(u)}^{1/2} 2^{|v(u)| \max(\chi, r)}  \prod_{j\in u} b^{-\min(\chi, r) \mu(k_j)} \|f\|_{K_{\chi,[d]}}.
\end{equation*}

Thus we have shown the following lemma.
\begin{lemma}\label{lem_sigma_bound}
We have for all $f\in H(K_{\chi, [d]})$ that
\begin{equation*}
\sigma_{r, (\bsl_u, \bszero), d}(f) \le  \gamma_{v(u)}^{1/2} 2^{|v(u)| \max(\chi, r)}  \prod_{j\in u} b^{-\min(\chi, r) \mu(k_j)} \|f\|_{K_{\chi,[d]}}.
\end{equation*}
\end{lemma}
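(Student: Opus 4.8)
The plan is to establish the bound first on the dense subspace $P_{[d]}$ of polynomials and then to extend it to all of $H(K_{\chi,[d]})$ by a continuity argument. Two ingredients are already available: the Walsh-coefficient bound (\ref{sigma_bound}), which controls $\sigma_{r,(\bsell_u,\bszero),d}$ through the variation $V_\chi$, and the norm comparison (\ref{ineq_V_k_u}), which bounds $V_\chi$ on polynomials by the reproducing-kernel norm $\|\cdot\|_{k_{\chi,u}}$. Since the density of $P_{[d]}$ in $H(K_{\chi,[d]})$ has just been proved, the only genuinely new work is to combine these two estimates and to verify that $\sigma_{r,(\bsell_u,\bszero),d}$ behaves well under the passage to the limit.

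First I would treat $f\in P_{[d]}$. Recalling that $\sigma_{r,(\bsell_u,\bszero),d}(f)=\sigma_{r,(\bsell_u,\bszero),d}(f_{v(u)})$ depends only on the $v(u)$-th ANOVA component, I apply (\ref{sigma_bound}) and insert the polynomial estimate $V_\chi(f_{v(u)})\le 2^{\chi|v(u)|}\gamma_{v(u)}^{-1/2}\|f_{v(u)}\|_{k_{\chi,v(u)}}$ coming from (\ref{ineq_V_k_u}). The two decisive simplifications are the algebraic identity $\max(r-\chi,0)+\chi=\max(\chi,r)$, which merges the two powers of $2$ into $2^{|v(u)|\max(\chi,r)}$, and the cancellation of $\gamma_{v(u)}^{1/2}$ against $\gamma_{v(u)}^{-1/2}$. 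This leaves a clean intermediate inequality in terms of $\|f_{v(u)}\|_{k_{\chi,v(u)}}$. Finally, Lemma~\ref{Lemma5} (applied with $v=[d]$) yields $\|f_{v(u)}\|_{k_{\chi,v(u)}}\le\gamma_{v(u)}^{1/2}\|f\|_{K_{\chi,[d]}}$, which reinstates precisely the factor $\gamma_{v(u)}^{1/2}$ appearing in the claimed bound.

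To pass to a general $f\in H(K_{\chi,[d]})$, I choose $f_i\in P_{[d]}$ with $\|f-f_i\|_{K_{\chi,[d]}}\to 0$; since $H(K_{\chi,[d]})\hookrightarrow L_2$, this forces $\|f-f_i\|_{L_2}\to 0$ as well. The key observation is that $\sigma^2_{r,(\bsell_u,\bszero),d}$ is a \emph{finite} sum of squared Walsh coefficients $|\widehat f(\mathscr{E}_r(\bsk))|^2$ over $\bsk\in B_{r,(\bsell_u,\bszero),d}$, and each Walsh coefficient is an $L_2$-inner product, hence $L_2$-continuous; the quantitative estimate $\big||\widehat f(\bsk)|^2-|\widehat f_i(\bsk)|^2\big|\le\|f-f_i\|_{L_2}\big(2\|f\|_{L_2}+\|f-f_i\|_{L_2}\big)$ renders the convergence $\sigma_{r,(\bsell_u,\bszero),d}(f_i)\to\sigma_{r,(\bsell_u,\bszero),d}(f)$ explicit. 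Combining this with $\|f_i\|_{K_{\chi,[d]}}\to\|f\|_{K_{\chi,[d]}}$ and the polynomial bound, and then letting $i\to\infty$, establishes the lemma. The main obstacle is exactly this final step: one must ensure that the quantity $\sigma$ is continuous for the $L_2$ topology, which is strictly weaker than the $K_{\chi,[d]}$-topology, so that the polynomial inequality is preserved in the limit. The finiteness of the index set $B_{r,(\bsell_u,\bszero),d}$ is what makes this continuity harmless.
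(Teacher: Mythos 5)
Your proposal is correct and follows essentially the same route as the paper: the bound on the dense set $P_{[d]}$ obtained by combining (\ref{sigma_bound}) with (\ref{ineq_V_k_u}) (the exponents merging via $\max(r-\chi,0)+\chi=\max(\chi,r)$), the norm comparison from Lemma~\ref{Lemma5}, and then the limiting argument using $L_2$-continuity of the finitely many Walsh coefficients indexed by $B_{r,(\bsell_u,\bszero),d}$ together with $\|f_i\|_{K_{\chi,[d]}}\to\|f\|_{K_{\chi,[d]}}$. The only cosmetic difference is that you convert $\|f_{v(u)}\|_{k_{\chi,v(u)}}$ into $\gamma_{v(u)}^{1/2}\|f\|_{K_{\chi,[d]}}$ already in the polynomial case, whereas the paper performs this conversion after passing to the limit; the two orderings are interchangeable.
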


\section*{Acknowledgments}
Josef Dick is supported by a QE2 Fellowship from the Australian Research Council. \\
Michael Gnewuch acknowledges support by the Australian Research Council.

\footnotesize


\begin{thebibliography}{10}

\bibitem{ABG98}
{\sc P.~Acworth, M.~Broadie, and P.~Glasserman}, {\em A comparison of some
  {M}onte {C}arlo and quasi-{M}onte {C}arlo techniques for option pricing}, in
  Monte {C}arlo and quasi-{M}onte {C}arlo methods 1996 ({S}alzburg),
  H.~Niederreiter, P.~Hellekalek, G.~Larcher, and P.~Zinterhof, eds., vol.~127
  of Lecture Notes in Statist., Springer, New York, 1998, pp.~1--18.

\bibitem{Aro50}
{\sc N.~Aronszajn}, {\em Theory of reproducing kernels}, Trans. Amer. Math.
  Soc., 68 (1950), pp.~337--404.

\bibitem{Bal12}
{\sc J.~Baldeaux}, {\em Scrambled polynomial lattice rules for
  infinite-dimensional integration}, in Monte Carlo and Quasi-Monte Carlo
  Methods 2012, L.~Plaskota and H.~Wo\'zniakowski, eds., Springer, Heidelberg,
  2012, pp.~255--263.

\bibitem{BG12}
{\sc J.~Baldeaux and M.~Gnewuch}, {\em Optimal randomized multilevel algorithms
  for infinite-dimensional integration on function spaces with {ANOVA}-type
  decomposition}.
\newblock arXiv:1209.0882v1 [math.NA], Preprint 2012.

\bibitem{CMO97}
{\sc R.~E. Caflisch, W.~Morokoff, and A.~B. Owen}, {\em Valuation of mortgage
  backed securities using brownian bridges to reduce effective dimension}, J.
  Comp. Finance, 1 (1997), pp.~27--46.

\bibitem{CDMR09}
{\sc J.~Creutzig, S.~Dereich, T.~M{\"u}ller-Gronbach, and K.~Ritter}, {\em
  Infinite-dimensional quadrature and approximation of distributions}, Found.
  Comput. Math., 9 (2009), pp.~391--429.

\bibitem{D11a}
{\sc J.~Dick}, {\em Higher order scrambled digital nets achieve the optimal
  rate of the root mean square error for smooth integrands}, Ann. Statist., 39
  (2011), pp.~1372--1398.

\bibitem{DG12}
{\sc J.~Dick and M.~Gnewuch}, {\em Infinite-dimensional integration in weighted
  {H}ilbert spaces: anchored decompositions, optimal deterministic algorithms,
  and higher order convergence}.
\newblock arXiv:1210.4223 [math.NA], Preprint 2012.

\bibitem{DKS13}
{\sc J.~Dick, F.~Y. Kuo, and I.~H. Sloan}, {\em High dimensional integration --
  the quasi-{M}onte {C}arlo way}, Acta Numerica, 22 (2013), pp.~133--288.

\bibitem{DP10}
{\sc J.~Dick and F.~Pillichshammer}, {\em Digital nets and sequences},
  Cambridge University Press, Cambridge, 2010.

\bibitem{DSWW06}
{\sc J.~Dick, I.~H. Sloan, X.~Wang, and H.~Wo{\'z}niakowski}, {\em Good lattice
  rules in weighted {K}orobov spaces with general weights}, Numer. Math., 103
  (2006), pp.~63--97.

\bibitem{Gil08a}
{\sc M.~B. Giles}, {\em Multilevel {M}onte {C}arlo path simulation}, Oper.
  Res., 56 (2008), pp.~607--617.

\bibitem{Gne12a}
{\sc M.~Gnewuch}, {\em Lower error bounds for randomized multilevel and
  changing dimension algorithms}.
\newblock arXiv:1209.1808 [math.NA], Preprint 2012, to appear in: J.~Dick,
  F.~Y.~Kuo, G.~W.~Peters, I.~H.~Sloan (Eds.), Monte Carlo and Quasi-Monte
  Carlo Methods 2012, Springer.

\bibitem{Gne10}
\leavevmode\vrule height 2pt depth -1.6pt width 23pt, {\em Infinite-dimensional
  integration on weighted {H}ilbert spaces}, Math. Comp., 81 (2012),
  pp.~2175--2205.

\bibitem{GMR12}
{\sc M.~Gnewuch, S.~Mayer, and K.~Ritter}, {\em On weighted {H}ilbert spaces
  and integration of functions of infinitely many variables}, J.~Complexity,
  (2013).
\newblock http://dx.doi.org/10.1016/j.jco.2013.05.004.

\bibitem{GD12}
{\sc T.~Goda and J.~Dick}, {\em Construction of interlaced scrambled polynomial
  lattice rules of arbitrary high order}.
\newblock arXiv: 1301.6441v1 [math.NA], Preprint 2013.

\bibitem{GH10}
{\sc M.~Griebel and M.~Holtz}, {\em Dimension-wise integration of
  high-dimensional functions with applications to finance}, J.~Complexity, 26
  (2010), pp.~455--489.

\bibitem{GKS10}
{\sc M.~Griebel, F.~Y. Kuo, and I.~H. Sloan}, {\em The smoothing effect of the
  {ANOVA} decomposition}, J. Complexity, 26 (2010), pp.~523--551.

\bibitem{GKS11}
\leavevmode\vrule height 2pt depth -1.6pt width 23pt, {\em The smoothing effect
  of integration in {$\R^d$} and the {ANOVA} decomposition}, Math. Comp., 82
  (2013), pp.~383--400.

\bibitem{Hei98}
{\sc S.~Heinrich}, {\em {M}onte {C}arlo complexity of global solution of
  integral equations}, J. Complexity, 14 (1998), pp.~151--175.

\bibitem{HMNR10}
{\sc F.~J. Hickernell, T.~M{\"u}ller-Gronbach, B.~Niu, and K.~Ritter}, {\em
  Multi-level {M}onte {C}arlo algorithms for infinite-dimensional integration
  on $\mathbb{R}^{\N}$}, J. Complexity, 26 (2010), pp.~229--254.

\bibitem{HW01}
{\sc F.~J. Hickernell and X.~Wang}, {\em The error bounds and tractability of
  quasi-{M}onte {C}arlo algorithms in infinite dimensions.}, Math. Comp., 71
  (2001), pp.~1641--1661.

\bibitem{KSS12}
{\sc F.~Y. Kuo, C.~Schwab, and I.~H. Sloan}, {\em Quasi-{M}onte {C}arlo finite
  element methods for a class of elliptic partial differential equations with
  random coefficients}, SIAM J. Numer. Anal.., 50 (2012), pp.~3351--3374.

\bibitem{KSWW10a}
{\sc F.~Y. Kuo, I.~H. Sloan, G.~W. Wasilkowski, and H.~Wo\'zniakowski}, {\em
  Liberating the dimension}, J. Complexity, 26 (2010), pp.~422--454.

\bibitem{KSWW10b}
\leavevmode\vrule height 2pt depth -1.6pt width 23pt, {\em On decompositions of
  multivariate functions}, Math. Comp., 79 (2010), pp.~953--966.

\bibitem{LL02}
{\sc P.~L'Ecuyer and C.~Lemieux}, {\em Recent advances in randomized
  quasi-{M}onte {C}arlo methods}, in Modeling uncertainty, vol.~46 of Internat.
  Ser. Oper. Res. Management Sci., Kluwer Acad. Publ., Boston, MA, 2002,
  pp.~419--474.

\bibitem{LO06}
{\sc R.~Liu and A.~B. Owen}, {\em Estimating mean dimensionality of analysis of
  variance decompositions}, J.~Amer. Statist. Assoc.,  (2006), pp.~712--720.

\bibitem{NT96}
{\sc S.~Ninomiya and S.~Tezuka}, {\em Towards real-time pricing of complex
  financial derivatives}, Applied Mathematical Finance, 3 (1996), pp.~561--590.

\bibitem{NH09}
{\sc B.~Niu and F.~J. Hickernell}, {\em Monte {C}arlo simulation of stochastic
  integrals when the cost of function evaluations is dimension dependent}, in
  Monte Carlo and Quasi-Monte Carlo Methods 2008, P.~L'Ecuyer and A.~B. Owen,
  eds., Springer, Heidelberg, 2008, pp.~545--560.

\bibitem{NHMR11}
{\sc B.~Niu, F.~J. Hickernell, T.~M{\"u}ller-Gronbach, and K.~Ritter}, {\em
  Deterministic multi-level algorithms for infinite-dimensional integration on
  $\mathbb{R}^{\N}$}, J. Complexity, 27 (2011), pp.~331--351.

\bibitem{Nov88}
{\sc E.~Novak}, {\em Deterministic and Stochastic Error Bounds in Numerical
  Analysis}, vol.~1349 of Lect. Notes in Math., Springer-Verlag, Berlin, 1988.

\bibitem{NW08}
{\sc E.~Novak and H.~Wo\'zniakowski}, {\em Tractability of {M}ultivariate
  {P}roblems. Vol. 1: {L}inear {I}nformation}, EMS Tracts in Mathematics,
  European Mathematical Society (EMS), Z\"urich, 2008.

\bibitem{O95}
{\sc A.~B. Owen}, {\em Randomly permuted $(t,m,s)$-nets and $(t,s)$-sequences},
  in Monte Carlo and Quasi-Monte Carlo Methods in Scientific Computing,
  H.~Niederreiter and P.~J.-S. Shiue, eds., New York, 1995, Springer,
  pp.~299--317.

\bibitem{PT96}
{\sc A.~Papageorgiou and J.F. Traub}, {\em Beating {M}onte {C}arlo}, Risk,
  (1996), pp.~63--65.

\bibitem{PT95}
{\sc S.~H. Paskov and J.~F. Traub}, {\em Faster valuation of financial
  derivatives}, J. Portfolio Management,  (1995), pp.~113--120.

\bibitem{PW11}
{\sc L.~Plaskota and G.~W. Wasilkowski}, {\em Tractability of
  infinite-dimensional integration in the worst case and randomized settings},
  J. Complexity, 27 (2011), pp.~505--518.

\bibitem{SW98}
{\sc I.~H. Sloan and H.~Wo\'{z}niakowski}, {\em When are quasi-{M}onte {C}arlo
  algorithms efficient for high dimensional integrals?}, J. Complexity, 14
  (1998), pp.~1--33.

\bibitem{TWW88}
{\sc J.~F. Traub, G.~W. Wasilkowski, and H.~Wo\'zniakowski}, {\em
  Information-Based Complexity}, Academic Press, New York, 1988.

\bibitem{Was89}
{\sc G.~W. Wasilkowski}, {\em Randomization for continuous problems}, J.
  Complexity, 5 (1989), pp.~195--218.

\bibitem{Was12}
\leavevmode\vrule height 2pt depth -1.6pt width 23pt, {\em Liberating the
  dimension for ${L}_2$-approximation}, J. Complexity, 28 (2012), pp.~304--319.

\bibitem{WW96}
{\sc G.~W. Wasilkowski and H.~Wo\'zniakowski}, {\em On tractability of path
  integration}, J. Math. Physics, 37 (1996), pp.~2071--2088.

\bibitem{WW11a}
{\sc G.~W. Wasilkowski and H.~Wo\'zniakowski}, {\em Liberating the dimension
  for function approximation}, J. Complexity, 27 (2011), pp.~86--110.

\bibitem{WW11b}
\leavevmode\vrule height 2pt depth -1.6pt width 23pt, {\em Liberating the
  dimension for function approximation: Standard information}, J. Complexity,
  27 (2011), pp.~417--440.

\end{thebibliography}

\end{document}